\renewcommand{\Re}{\operatorname{Re}}
\renewcommand{\Im}{\operatorname{Im}}
\newcommand{\ZZ}{\mathbb Z}
\newcommand{\RR}{\mathbb R}
\newtheorem{theorem}{Theorem}[section]
\newtheorem{lemma}[theorem]{Lemma}
\newtheorem{proposition}[theorem]{Proposition}
\newtheorem{corollary}[theorem]{Corollary}
\newtheorem{definition}[theorem]{Definition}
\newtheorem{remark}[theorem]{Remark}
\numberwithin{equation}{section}
\numberwithin{equation}{section}
\title{Global well-posedness and scattering of the two dimensional cubic focusing nonlinear Schr\"odinger system  }
\author{Xing Cheng$^{*}$, Zihua Guo$^{**}$, Gyeongha Hwang$^{***}$, and Haewon Yoon$^{****}$}
\begin{document}

\maketitle

\renewcommand{\thefootnote}{\fnsymbol{footnote}}
\footnotetext{\hspace*{-5mm}
\begin{tabular}{@{}r@{}p{17cm}@{}}
%& Manuscript last updated: \today.\\
$^*$ & College of Science, Hohai University, Nanjing 210098, Jiangsu, China. \texttt{chengx@hhu.edu.cn}\\
$^{**}$ & School of Mathematical Sciences, Monash University,  VIC 3800,  Australia.   \texttt{Zihua.Guo@monash.edu} \\
$^{***}$ & Department of Mathematics, Yeungnam University, 280 Daehak-Ro, Gyeongsan, Gyeongbuk 38541, Republic of Korea. \texttt{ghhwang@yu.ac.kr} \\
$^{****}$ & Department of Mathematics, Chung-Ang University, Seoul 06974, Korea. \texttt{hwyoon@cau.ac.kr}
\end{tabular}}

\begin{abstract}
In this article, we prove the global well-posedness and scattering of the cubic focusing infinite coupled nonlinear Schr\"odinger
system on $\mathbb{R}^2$  below the threshold in $L_x^2h^1(\mathbb{R}^2\times \ZZ)$. We first establish the variational characterization of the ground state, and derive the threshold of the global well-posedness and scattering.
 Then we show the global well-posedness and scattering below the threshold by the concentration-compactness/rigidity method,
 where the almost periodic solution is excluded by adapting the argument in the proof of the mass-critical nonlinear Schr\"odinger equations by B. Dodson.
As a byproduct of the scattering of the cubic focusing infinite coupled nonlinear Sch\"odinger system, we obtain the scattering of the cubic focusing nonlinear Schr\"odinger equation on the small cylinder, this is the first large data scattering result of the focusing nonlinear Schr\"odinger equations on the cylinders. In the article, we also show the global well-posedness and scattering of the two dimensional $N-$coupled focusing cubic nonlinear Schr\"odinger system in $\left(L^2(\mathbb{R}^2) \right)^N$.

\bigskip

\noindent \textbf{Keywords}: Nonlinear Schr\"odinger system, ground state, scattering, almost periodic solution, focusing.
\bigskip

\noindent \textbf{Mathematics Subject Classification (2010)} Primary: 35Q55; Secondary: 35P25, 58J37

\end{abstract}

\setcounter{tocdepth}{2}

\section{Introduction}
In this article, we consider the cubic focusing nonlinear Schr\"odinger system on $\mathbb{R}^2$:
\begin{equation}\label{eq1.1}
\begin{cases}
i\partial_t \vec{u}   + \Delta_{\mathbb{R}^2 } \vec{u}  = - \vec{F} (\vec{u}) ,\\
\vec{u}(0) = \vec{u}_{0},
\end{cases}
\end{equation}
 where $\vec{u} = \{ u_j:\mathbb{R}\times\mathbb{R}^2\to \mathbb{C}\}_{j\in \mathbb{Z}_N }$, $\vec{u}_0 = \left\{u_{0,j}:\mathbb{R}^2\to \mathbb{C}\ \right\}_{j\in \mathbb{Z}_N }$, and the nonlinear term %is
  $\vec{F} \left(\vec{u} \right) = \left\{\vec{F}_j(\vec{u}) \right\}_{j \in \mathbb{Z}_N }$,
  	\begin{equation}\label{added1}
	\vec{F}_j(\vec{u}) := \sum_{( j_1,j_2,j_3) \in \mathcal{R}(j) } u_{j_1} \bar{u}_{j_2} u_{j_3} =  2\bigg(\sum_{k \in \mathbb{Z}_N } |u_k|^2 \bigg)  u_j - |u_j|^2 u_j ,
	\end{equation}
with
\begin{align*}
\mathcal{R}(j) =
 \left\{ (j_1,j_2,j_3) \in \mathbb{Z}_N^3: j_1-j_2+j_3= j, \, j_1^2 - j_2^2 + j_3^2 = j^2\right\}.
\end{align*}
Here $\mathbb{Z}_N := \{0, 1, \cdots, N-1 \}$, for any $N \in \mathbb{N}_+$, with the convention that $\mathbb{Z}_\infty = \mathbb{Z}$.

The nonlinear Schr\"odinger system \eqref{eq1.1} enjoys the following conservation laws:
\begin{align*}
\text{ \footnotemark   mass:   }     \mathcal{M}_{a,b,c}(\vec{u}(t)) &  = \int_{\mathbb{R}^2  } \sum_{j\in \mathbb{Z}} \left(a+ bj +  c j^2\right) \left|u_j(t,x )\right|^2\,\mathrm{d}x, \text{ where } a,b,c \in \mathbb{R},\\
\intertext{ and }
\text{   energy:  }       \quad \mathcal{E}(\vec{u}(t))  &   =     \int_{\mathbb{R}^2  } \sum_{j\in \mathbb{Z}} \left( \frac12 \left|\nabla u_j(t,x)\right|^2  -  \frac14\left(\bar{u}_j \vec{F}_j(\vec{u})\right) (t,x) \right)   \,\mathrm{d}x\\
& =    \int_{\mathbb{R}^2   } \sum_{j\in \mathbb{Z}}\frac12 \left|\nabla u_j(t,x)\right|^2  - \frac14  \sum_{ \substack{ j\in \mathbb{Z},\\
 n\in \mathbb{N}} }
 \Big|\sum_{\substack{j_1-j_2   =j,\\  j_1^2 - j_2^2   = n}} (  u_{j_1} \bar{u}_{j_2} )(t,x) \Big|^2 \,\mathrm{d}x.
 \end{align*}
\footnotetext{ In fact, $\| \vec{ u} \|_{L_x^2 h^k}$ is conserved for any $k \in \mathbb{N}$,
and therefore the nonlinear Schr\"odinger system \eqref{eq1.1} has infinite conservation laws, which is very interesting. This is pointed out by S. Kwon. }
For $N \in \mathbb{N}_+ $, the nonlinear Schr\"odinger system \eqref{eq1.1} is exactly the $N-$coupled nonlinear Schr\"odinger system:
\begin{align}\label{eq6.1v83}
\begin{cases}
i \partial_t u_j + \Delta u_j = -    |u_j|^2 u_j - 2 \sum\limits_{k \ne j}   |u_k|^2 u_j, \\
u_j(0,x) = u_{0,j}(x), \ j = 0,1,\cdots, N-1,
\end{cases}
\end{align}
where $u_j : \mathbb{R}\times \mathbb{R}^2 \to \mathbb{C}$ for $j=0,1, \cdots, N-1$ and $N\in \mathbb{N}_+$.
 This kind of finite coupled nonlinear Schr\"odinger system has applications in nonlinear optics, see \cite{AA} and the references therein. It is a good approximation describing the propagation of self-trapped mutually incoherent wave packets in nonlinear optics, with $u_j$($j=0,1, \cdots, N-1$) denotes the $j-$th component of the beam.
It also has application in the Bose-Einstein condensates, see \cite{QZL,ZL} and the references therein. We also refer to \cite{X} for the study of other type coupled nonlinear Schr\"odinger system. There are many interesting research works on the coupled nonlinear Schr\"odinger system, especially the ground states, we refer to \cite{LW,WY}.

On the other hand, for the infinite coupled nonlinear Schr\"odinger system, that is $N = \infty$, the nonlinear Schr\"odinger system appears in the nonlinear approximate of the cubic focusing nonlinear Schr\"odinger equations on the cylinder $\mathbb{R}^2\times \mathbb{T}$ in \cite{CGYZ}, where it is called the resonant nonlinear Schr\"odinger system therein. The equation in the defocusing case was studied in \cite{YZ}, a similar nonlinear Schr\"odinger system is derived also in the study of the nonlinear Schr\"odinger equation with partial harmonic potentials in \cite{CGGLS}. We also refer to \cite{HP0} for a different view point.
Although there are a lot of works on the global well-posedness and scattering of the defocusing nonlinear Schr\"odinger equations
on the cylinder $\mathbb{R}^2\times \mathbb{T}$ (See \cite{CGYZ,CGZ,TV2}), there are very few result on the long time behavior of the solutions of the
focusing nonlinear Schr\"odinger equations on the cylinders. We refer to \cite{TTV} for a result on the orbital stability in the focusing case, and global well-posedness result in \cite{YYZ}.

There are some work on other type nonlinear Schr\"odinger systems. We refer to the work of T. Chen, Y. Hong, and N. Pavlovi\'c \cite{CHP,CHP2}. They studied a infinite system of cubic nonlinear Schr\"odinger equations in $\mathbb{R}^d$ when $d\ge 2$, which arises in the mean field quantum fluctuation dynamics for a system of infinitely many fermions with delta pair interactions in the vicinity of an equilibrium solution at zero temperature. In \cite{CHP}, they proved the global well-posedness of the infinite system of cubic nonlinear Schr\"odinger equations in $d=2$ or $3$. Later, they studied the dynamics of the system in $d\ge 3$ near thermal equilibrium and proved scattering in the case of small perturbation around equilibrium in a certain generalized Sobolev space of density operators \cite{CHP2}. The last author together with Y. Hong and S. Kwon \cite{HKY} constructed an extremizer for the Lieb-Thirring energy inequality by developing the concentration-compactness technique for operator valued inequality, and gave the global well-posedness versus finite time blowup dichotomy for the infinite system of focusing cubic nonlinear Schr\"odinger equations in $\mathbb{R}^3$ when each wave function is restricted to be orthogonal.

In the same time, there are some progress on the study of the quadratic nonlinear Schr\"odinger system in $\mathbb{R}^d$, which is mass-critical when $d = 4$.
In \cite{HOT}, N. Hayashi, T. Ozawa, and K. Tanaka studied the equation in $d \leq 6$, and proved the existence of ground states.
In \cite{IKN}, T. Inui, N. Kishimoto, and K. Nishimura studied scattering problem of the quadratic nonlinear Schr\"odinger system in $\mathbb{R}^4$
when the equation satisfies mass-resonance condition or does not satisfy the mass-resonance condition. %\rpl
{In \cite{IKN1}, they studied the equation in $d\le 6$, and proved finite time blow-up in the radial case in $d = 5,6$ when the equation does not satisfy mass-resonance condition and prove blow-up or grow-up in $d = 4$.} In \cite{HIN}, M. Hamano, T. Inui, and K. Nishimura proved scattering below the standing wave solution in the radial case when the equation does not satisfy mass-resonance condition.

In this article, we will mainly study the long time behavior of the solution to the nonlinear Schr\"odinger system \eqref{eq1.1}. Because the behavior of the solution to the nonlinear coupled Schr\"odinger system is slight different between $N= \infty $ and $N < \infty$, we divide the discussion of the main result into two subsections.
\subsection{Infinite coupled nonlinear Schr\"odinger system}

We now present the global well-posedness and scattering of the cubic focusing infinite coupled
nonlinear Schr\"odinger system in $L_x^2 h^1(\mathbb{R}^2 \times \mathbb{Z})$.
\begin{theorem}[Global well-posedness and scattering of the cubic focusing infinite coupled nonlinear Schr\"odinger system]\label{th1.2}
For any initial data $\vec{u}_0 \in L_x^2 h^1$ satisfying $\left\|\vec{u}_0\right\|_{L_x^2 l^2} < \frac1{\sqrt2}  \|Q\|_{L_x^2}$, where $Q$ is the ground state of $\Delta_{\mathbb{R}^2} Q - Q = - Q^3$, there exists a global solution $\vec{u} = \left\{u_j\right\}_{j\in \mathbb{Z}}$ to \eqref{eq1.1}, satisfying
\begin{equation*}
\left\|\vec{u}\right\|_{L_{t,x}^4 h^1(\mathbb{R}\times \mathbb{R}^2  \times \mathbb{Z})}  \le C,
\end{equation*}
for some constant $C $ depends only on $\left\|\vec{u}_0\right\|_{L_x^2 h^1}$.
Furthermore, the solution scatters in $L_x^2 h^1$ in the sense that there exists $\left\{u_j^{\pm }\right\}_{j \in \mathbb{Z}} \in  L^2_x h^1$
such that
\begin{equation*}
\left\| \left( \sum\limits_{j\in \mathbb{Z}} \langle j\rangle^2 \left|  u_j(t) - e^{it\Delta_{\mathbb{R}^2}} u_j^{\pm  }\right|^2 \right)^\frac12  \right\|_{L^2(\mathbb{R}^2 )} \to 0, \text{ as } t\to \pm \infty.
\end{equation*}
\end{theorem}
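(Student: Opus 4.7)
The plan is to follow the concentration-compactness/rigidity scheme of Kenig--Merle refined by Dodson to the mass-critical setting, now carried out in the infinite coupled system $L^2_xh^1(\mathbb R^2\times\mathbb Z)$, where the extra $h^1$ structure in the mode index $j$ has to be tracked throughout. Using the variational characterization of the ground state established earlier in the paper, I would begin with a coercivity estimate below the threshold. Setting $\rho(x):=\bigl(\sum_j|u_j(x)|^2\bigr)^{1/2}$, one has $\sum_j \bar u_j \vec F_j(\vec u)=2\rho^{4}-\sum_j|u_j|^{4}\leq 2\rho^{4}$. The sharp two dimensional Gagliardo--Nirenberg inequality $\|v\|_{L^4}^4\leq \tfrac{2}{\|Q\|_{L^2}^2}\|v\|_{L^2}^2\|\nabla v\|_{L^2}^2$ applied to $v=\rho$, combined with the diamagnetic inequality $|\nabla\rho|^2\leq \sum_j|\nabla u_j|^2$, then yields
\begin{equation*}
\mathcal E(\vec u)\geq \left(\frac12-\frac{\|\vec u\|_{L^2_xl^2}^2}{\|Q\|_{L^2}^2}\right)\|\nabla\vec u\|_{L^2_xl^2}^2.
\end{equation*}
Under the subthreshold hypothesis $\|\vec u_0\|_{L^2_xl^2}<\tfrac{1}{\sqrt2}\|Q\|_{L^2}$, conservation of mass and energy then gives a uniform in time bound on $\|\nabla\vec u\|_{L^2_xl^2}$; combined with conservation of the $h^k$ norms this yields uniform $L^2_xh^1$ control and, via the local well-posedness theory (Strichartz estimates for $e^{it\Delta_{\mathbb R^2}}$ applied componentwise plus a fractional chain rule in the $j$ variable), global existence.

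Next, I would prove scattering by the standard induction on mass / contradiction scheme. Define the critical mass $m_c$ as the supremum of squared $L^2_xl^2$ masses for which every subthreshold solution has finite $L^4_{t,x}h^1$ norm, and assume $m_c<\tfrac12\|Q\|_{L^2}^2$. Using a linear profile decomposition for bounded sequences in $L^2_xh^1$ (adapted to the mass-critical Strichartz inequality and the symmetries: spatial translations, Galilean boosts, parabolic scalings, time translations) together with the nonlinear profile approximation and stability theory for the coupled system, I would extract a minimal mass almost periodic solution $\vec u_c$ whose orbit is precompact in $L^2_xh^1$ modulo these symmetries. The uniform $\dot H^1$ control obtained in the first step immediately rules out the self-similar scenario, leaving a quasi-soliton-type almost periodic solution with bounded frequency scale $N(t)$ and bounded Galilean parameter $\xi(t)$.

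Finally, I would exclude the remaining almost periodic solution by the long-time Strichartz estimate and frequency-localized interaction Morawetz inequality of Dodson, adapted to the system. The nonlinearity splits as $\vec F_j(\vec u)=2\rho^{2} u_j-|u_j|^2u_j$, and both pieces can be treated componentwise using Littlewood--Paley decompositions in $x$ together with pointwise bounds such as $|\vec F(\vec u)|_{l^2}\lesssim \rho^{2}\|\vec u\|_{l^\infty}$ and the embedding $h^1\hookrightarrow l^\infty$. The Morawetz estimate combined with almost periodicity will then force $\|\vec u_c\|_{L^2_xl^2}=0$, a contradiction, and scattering follows in the usual way from the finite $L^4_{t,x}h^1$ bound. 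The main obstacle, and the most delicate step, will be porting Dodson's frequency-localized Morawetz estimate to the coupled system: one must control all the cross interactions produced by the sum $\sum_k|u_k|^2 u_j$, verify that each intermediate estimate is absolutely convergent in $j\in\mathbb Z$, and handle the Littlewood--Paley commutators in $x$ uniformly in $j$. This is precisely where the $h^1$ hypothesis (rather than merely $l^2$) is essential, through the embedding $h^1\hookrightarrow l^\infty$ that tames the long-range $\rho^{2}u_j$ coupling.
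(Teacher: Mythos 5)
Your coercivity derivation via $\rho=(\sum_j|u_j|^2)^{1/2}$, the diamagnetic-type inequality $|\nabla\rho|^2\le\sum_j|\nabla u_j|^2$, and the scalar sharp Gagliardo--Nirenberg inequality is a genuinely simpler route to the inequality $\mathcal N(\vec u)\le\frac{4}{\|Q\|_{L^2}^2}\|\vec u\|_{L^2_xl^2}^2\|\nabla\vec u\|_{L^2_xl^2}^2$ than the paper's Section~\ref{se3v30}, which goes through the finite-$N$ maximizers and a limiting argument. That observation is worth keeping.

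However, there is a real gap in the way you then use this estimate. The hypothesis is only $\vec u_0\in L^2_xh^1$, which gives mass (and its $h^k$ variants) but does \emph{not} make the energy $\mathcal E(\vec u_0)$ finite; the problem is mass-critical, and $L^2_xh^1$ is a mass-level space with no $\dot H^1_x$ content. Consequently ``conservation of mass and energy gives a uniform in time bound on $\|\nabla\vec u\|_{L^2_xl^2}$'' is unavailable, and neither global existence nor a uniform $\dot H^1_x$ bound follows by this step. The paper's route is the only one that works here: Theorem~\ref{th2.4v103} reduces everything to showing $\|\vec u\|_{L^4_{t,x}l^2}<\infty$, and \emph{that} bound is what simultaneously yields global existence and scattering. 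The same issue infects your rigidity step: you cannot ``rule out the self-similar scenario by uniform $\dot H^1$ control,'' because you do not have such control a priori. The paper instead dichotomizes on whether $\int_0^\infty N(t)^3\,\mathrm dt$ is finite or infinite. In the finite case one first \emph{derives} the extra $\dot H^s_xl^2$ regularity of the minimal almost-periodic solution (Theorem~\ref{th1.12v20}, the Dodson--Killip--Tao--Visan additional-regularity mechanism), and only then is the coercivity estimate combined with energy conservation to conclude $\vec u=0$. In the infinite case one must run the frequency-localized interaction Morawetz argument, as you note, but the $h^1\hookrightarrow l^\infty$ embedding is not the load-bearing device there: the paper's Morawetz computation is carried out entirely at the $l^2$ level using the norm equivalence $\mathcal N(\vec u)\sim\|\vec u\|_{L^4_{t,x}l^2}^4$ of \eqref{eq1.5v83}; the $h^1$ structure enters earlier, in the local well-posedness and stability theory (trilinear estimates for $\vec F$), not in the Morawetz estimate itself.
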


\begin{remark}
The threshold of scattering is sharp in the sense that if the mass is greater than $\frac12 \|Q\|_{L^2}^2$, we have finite time blow up.
We believe when the mass is equal to the threshold, the solution still scatters, we refer to the similar results on the 2-D cubic-quintic NLS, see \cite{CS,Ch,Mu}. This will be discussed in our future project, \cite{CGHY}.

\end{remark}

As a consequence of Theorem \ref{th1.2}, by the argument in the proof of Theorem 3.9 in \cite{CGYZ}, we can obtain the global well-posedness and scattering of the large-scale solution of the focusing cubic NLS on $\mathbb{R}^2 \times \mathbb{T}$ in $L_x^2 H_y^1$, where $\mathbb{T} = \mathbb{R}/2\pi \mathbb{Z}$.
\begin{theorem}[GWP \& scattering of the large-scale solution of the focusing cubic NLS on the cylinder]
\label{le3.11v63}
Let $\phi \in L^2_x H_y^1(\mathbb{R}^2\times \mathbb{T})$ with $\| \phi \|_{L_{x,y}^2 } < \frac1{\sqrt2}  \|Q\|_{L^2}$
be given, then there is $\lambda_0=  \lambda_0 (\phi)$ sufficiently large such that for $\lambda \ge \lambda_0$, we have a unique global solution
$U_\lambda \in C_t^0 L_x^2 H_y^1(\mathbb{R} \times \mathbb{R}^2 \times \mathbb{T})$ of
\begin{equation}\label{eq1.3v72}
\begin{cases}
i \partial_t U_\lambda + \Delta_{\mathbb{R}^2 \times \mathbb{T}} U_\lambda = - \left|U_\lambda\right|^2 U_\lambda,
\\
U_\lambda(0,x,y) = \frac1\lambda \phi\left( \frac{x}\lambda, y \right).
\end{cases}
\end{equation}
Moreover, for $\lambda \ge \lambda_0$, we have
\begin{align*}
\left\|U_\lambda \right\|_{L_t^\infty L_x^2 H_y^1 \cap L_{t,x}^4 H_y^1(\mathbb{R} \times \mathbb{R}^2 \times \mathbb{T})} \lesssim_{\|\phi\|_{L_x^2 H_y^1}} 1.
\end{align*}
As a consequence, $U_\lambda$ scatters in $L_x^2 H_y^1$ in the sense that there exist $ \left\{ U_\lambda^\pm \right\} \in L_x^2 H_y^1 $ such that
\begin{align*}
\left\| U_\lambda (t) - e^{it \Delta_{\mathbb{R}^2 \times \mathbb{T}} } U_\lambda^\pm \right\|_{L_x^2 H_y^1 } \to 0, \text{ as } t \to \pm \infty.
\end{align*}
\end{theorem}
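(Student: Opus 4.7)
\medskip

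The plan is to deduce Theorem \ref{le3.11v63} from Theorem \ref{th1.2} by the large-scale/resonant-limit argument, exactly as in the proof of Theorem 3.9 in \cite{CGYZ}. First I would introduce the rescaled solution
\begin{equation*}
V_\lambda(t,x,y) := \lambda\, U_\lambda\!\left(\lambda^2 t,\lambda x,y\right),
\end{equation*}
so that $V_\lambda$ solves $i\partial_t V_\lambda + \Delta_{\mathbb{R}^2} V_\lambda + \lambda^{-2}\partial_y^2 V_\lambda = -|V_\lambda|^2 V_\lambda$ with initial data $\phi$. Expanding in the $y$-Fourier basis, $V_\lambda(t,x,y) = \sum_{j\in\mathbb{Z}} v_{\lambda,j}(t,x) e^{ijy}$, and removing the linear $y$-frequency by the gauge $w_{\lambda,j}(t,x) := e^{ij^2 t/\lambda^2} v_{\lambda,j}(t,x)$, the coefficient vector $\vec w_\lambda = \{w_{\lambda,j}\}_{j\in\mathbb{Z}}$ satisfies
\begin{equation*}
i\partial_t w_{\lambda,j} + \Delta_{\mathbb{R}^2} w_{\lambda,j} = -\vec{F}_j(\vec w_\lambda) - \mathcal{N}_\lambda(\vec w_\lambda)_j,\qquad \vec w_\lambda(0) = \{\hat\phi(\cdot,j)\}_{j\in\mathbb{Z}},
\end{equation*}
where $\vec F_j$ is the resonant nonlinearity from \eqref{added1}, and $\mathcal{N}_\lambda(\vec w_\lambda)_j$ is the \emph{non-resonant} remainder, consisting of cubic terms $w_{j_1}\bar w_{j_2}w_{j_3}$ with $j_1-j_2+j_3=j$ but $j_1^2-j_2^2+j_3^2\neq j^2$, each carrying a phase $e^{it(j_1^2-j_2^2+j_3^2-j^2)/\lambda^2}$.

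Next I would apply Theorem \ref{th1.2} to the limiting resonant system with initial data $\vec w_0 := \{\hat\phi(\cdot,j)\}_{j\in\mathbb{Z}}$. Parseval in $y$ gives $\|\vec w_0\|_{L^2_x\ell^2} = \|\phi\|_{L^2_{x,y}} < \tfrac{1}{\sqrt2}\|Q\|_{L^2}$ and $\|\vec w_0\|_{L^2_x h^1}\lesssim \|\phi\|_{L^2_x H^1_y}$, so Theorem \ref{th1.2} produces a global solution $\vec w$ with $\|\vec w\|_{L^4_{t,x}h^1} \le C(\|\phi\|_{L^2_x H^1_y})$. This $\vec w$ will serve as the approximate solution.

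The heart of the proof is a long-time stability/perturbation step: I would show that for $\lambda\ge\lambda_0$ sufficiently large, $\vec w_\lambda$ exists globally and stays $L^4_{t,x}h^1$-close to $\vec w$, by treating $\mathcal{N}_\lambda$ as a forcing term whose Strichartz norm goes to zero as $\lambda\to\infty$. The decay comes from integration by parts in time exploiting the non-resonant phases $e^{it(j_1^2-j_2^2+j_3^2-j^2)/\lambda^2}$: each such integration gains a factor of $\lambda^{-2}/|j_1^2-j_2^2+j_3^2-j^2|$, combined with the bound on $\vec w$ in $L^4_{t,x}h^1$ and a frequency truncation in $j$ at a level depending on $\lambda$. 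This is the standard non-resonant smallness estimate (as in \cite{CGYZ}), and it feeds into the long-time perturbation lemma for the resonant system. The main obstacle here is controlling the $h^1$ tail of $\vec w$ when bounding $\mathcal{N}_\lambda$: one needs a high-frequency truncation argument so that the non-resonant gain $\lambda^{-2}$ dominates the frequency loss from the phase denominator, which requires that the $h^1$ bound in Theorem \ref{th1.2} be used both for the small-frequency non-resonant estimate and for the tail-smallness of the truncation error.

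Finally, undoing the rescaling $U_\lambda(t,x,y) = \lambda^{-1} V_\lambda(\lambda^{-2}t,\lambda^{-1}x,y)$ transfers the uniform $L^\infty_t L^2_x H^1_y\cap L^4_{t,x} H^1_y$ bound for $\vec w_\lambda$ to the claimed bound for $U_\lambda$, and standard Strichartz arguments (comparing $e^{-it\Delta_{\mathbb{R}^2\times\mathbb{T}}}U_\lambda(t)$ to a limit in $L^2_x H^1_y$) yield scattering as $t\to\pm\infty$.
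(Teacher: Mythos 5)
Your proposal follows the same route the paper takes: the paper's proof is simply a citation to the argument of Theorem~3.9 in \cite{CGYZ}, combined with Theorem~\ref{th1.2}, and your reconstruction of that argument (rescale, decompose the coefficient system into resonant part plus non-resonant remainder, use Theorem~\ref{th1.2} for the limiting resonant system as the approximate solution, invoke long-time stability with the non-resonant remainder small for $\lambda$ large, then undo the rescaling) is the right roadmap.

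However, the power of $\lambda$ in your rescaled PDE is wrong, and this propagates into an internal inconsistency. With $V_\lambda(t,x,y) = \lambda\, U_\lambda(\lambda^2 t,\lambda x,y)$, a direct chain-rule computation gives
\[
i\partial_t V_\lambda + \Delta_{\mathbb{R}^2} V_\lambda + \lambda^{2}\partial_y^2 V_\lambda = -|V_\lambda|^2 V_\lambda,
\]
with $\lambda^{2}\partial_y^2$, not $\lambda^{-2}\partial_y^2$. Correspondingly the gauge should be $w_{\lambda,j} = e^{i\lambda^2 j^2 t} v_{\lambda,j}$ and the non-resonant terms carry the \emph{rapidly oscillating} phase $e^{i\lambda^2 t\,(j^2 - j_1^2 + j_2^2 - j_3^2)}$. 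Your claimed gain of $\lambda^{-2}/|j_1^2-j_2^2+j_3^2-j^2|$ from integrating by parts in time is exactly right for these rapid phases, but it is incompatible with the slow phases $e^{it(\cdots)/\lambda^2}$ you actually wrote down: integrating those by parts would produce a factor of $\lambda^{2}$, i.e.\ growth rather than decay, and the whole stability step would fail. Once the exponents are corrected the argument goes through exactly as in \cite{CGYZ}; the rest of your outline, including the observation that $\|\vec w_0\|_{L^2_x\ell^2}=\|\phi\|_{L^2_{x,y}}$ and the treatment of the $h^1$-tail in the frequency truncation, matches what is needed.
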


Let
\begin{align*}
\tilde{U}_\lambda (t,x,y) =  \lambda  U_\lambda ( \lambda^2 t , \lambda x, y),
\end{align*}
we have
\begin{align*}
\begin{cases}
i \partial_t \tilde{U}_\lambda + \Delta_{\mathbb{R}^2 \times { \mathbb{T}_{ \lambda^{-1} } } } \tilde{U}_\lambda = - \left|\tilde{U}_\lambda \right|^2 \tilde{U}_\lambda, \\
\quad \\
\tilde{U}_\lambda(0,x,y)  = \tilde{\phi}(x,y),
\end{cases}
\end{align*}
where $\tilde{\phi}$ is a modification of the periodic function $\phi$ with respect to $y$ on $ [ -\pi, \pi]$, such that
$\tilde{\phi}(x,y)$ is periodic with respect to $y$ on  $\mathbb{T}_{\lambda^{-1}} : = \lambda^{-1} \mathbb{T} = [ - \lambda^{-1} \pi, \lambda^{-1} \pi ]$, and $\tilde{\phi} = \phi, a.e.$

Thus, we get the global well-posedness and scattering in $L_x^2 H_y^1(\mathbb{R}^2 \times \mathbb{T}_{\lambda^{-1} } )$ of the focusing cubic NLS on the small cylinder $\mathbb{R}^2 \times  \mathbb{T}_{ \lambda^{-1} } $, where $\lambda$ is sufficient large constant.
\begin{theorem}[GWP \& scattering of the focusing cubic NLS on the small cylinder]\label{le3.11v63}
Let $\phi \in L^2_x H_y^1(\mathbb{R}^2\times \mathbb{T})$ with $\| \phi \|_{L_{x,y}^2 } < \frac1{\sqrt2}  \|Q\|_{L^2}$
be given, then there is $\lambda_0=  \lambda_0 (\phi)$ sufficiently large such that for $\lambda \ge \lambda_0$, we have a unique global solution
$U_\lambda \in C_t^0 L_x^2 H_y^1(\mathbb{R} \times \mathbb{R}^2 \times  \mathbb{T}_{ \lambda^{-1}})$ of
\begin{equation}\label{eq1.3v72}
i \partial_t U_\lambda + \Delta_{\mathbb{R}^2 \times  \mathbb{T}_{ \lambda^{-1} }} U_\lambda = -  \left|U_\lambda \right|^2 U_\lambda,
\end{equation}
with $U_\lambda(0,x,y) = \tilde{  \phi} ( {x}, y), \text{ where $\tilde{\phi}$ is a periodic modification of $\phi$}$.
Moreover, for $\lambda \ge \lambda_0$,
\begin{align*}
\|U_\lambda \|_{L_t^\infty L_x^2 H_y^1 \cap L_{t,x}^4 H_y^1(\mathbb{R} \times \mathbb{R}^2 \times  \mathbb{T}_{  \lambda^{-1} })} \lesssim_{\|\phi\|_{L_x^2 H_y^1}} 1.
\end{align*}
As a consequence, $U_\lambda$ scatters to the solution of the linear equation $ i \partial_t V_\lambda + \Delta_{\mathbb{R}^2 \times  \mathbb{T}_{ \lambda^{-1} } } V_\lambda = 0$ in $L_x^2 H_y^1(\mathbb{R}^2 \times  \mathbb{T}_{ \lambda^{-1}}) $ when $\lambda$ is sufficiently large.

\end{theorem}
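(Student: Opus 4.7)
The plan is to obtain this theorem as a direct scaling consequence of the preceding Theorem \ref{le3.11v63} on the large-scale solution on $\mathbb{R}^2\times\mathbb{T}$. Indeed, the scaling already displayed between the two theorem statements, namely
\[
\tilde{U}_\lambda(t,x,y) := \lambda\, U_\lambda(\lambda^2 t,\lambda x,y),
\]
turns solutions of the cubic NLS on $\mathbb{R}^2\times\mathbb{T}$ into solutions of the cubic NLS on $\mathbb{R}^2\times\mathbb{T}_{\lambda^{-1}}$, because the rescaling is applied only in the $\mathbb{R}^2$-variables (which leaves $\Delta_y$ untouched) while the period in $y$ is contracted from $2\pi$ to $2\pi/\lambda$. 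I would first verify by a direct substitution that $\tilde{U}_\lambda$ satisfies the equation in the statement, and that its initial datum, after the natural periodic modification of $\phi$ onto the small torus $\mathbb{T}_{\lambda^{-1}}$, equals $\tilde{\phi}(x,y)$.

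Next, I would transfer the a priori bounds. The mass-critical scaling on $\mathbb{R}^2$ implies
\[
\|\tilde{U}_\lambda(t)\|_{L_x^2} = \|U_\lambda(\lambda^2 t)\|_{L_x^2}, \qquad \|\tilde{U}_\lambda\|_{L_{t,x}^4} = \|U_\lambda\|_{L_{t,x}^4},
\]
uniformly in $y$, while $H_y^1$ commutes with the $(t,x)$-rescaling. Consequently the bound on $\|U_\lambda\|_{L_t^\infty L_x^2 H_y^1\cap L_{t,x}^4 H_y^1(\mathbb{R}\times\mathbb{R}^2\times\mathbb{T})}$ provided by the previous theorem yields the claimed bound for $\tilde{U}_\lambda$ on $\mathbb{R}\times\mathbb{R}^2\times\mathbb{T}_{\lambda^{-1}}$. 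Uniqueness transfers via the same dictionary, so we obtain the asserted unique global solution in $C_t^0 L_x^2 H_y^1$.

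For the scattering claim, I would take the asymptotic states $U_\lambda^\pm\in L_x^2 H_y^1(\mathbb{R}^2\times\mathbb{T})$ produced by Theorem \ref{le3.11v63} and define their small-cylinder rescalings $\tilde{U}_\lambda^\pm(x,y):=\lambda\, U_\lambda^\pm(\lambda x,y)$ (extended periodically onto $\mathbb{T}_{\lambda^{-1}}$). Since $e^{it\Delta_{\mathbb{R}^2\times\mathbb{T}_{\lambda^{-1}}}}$ corresponds to $e^{it\Delta_{\mathbb{R}^2\times\mathbb{T}}}$ under the same rescaling, and the $L_x^2 H_y^1$ norm is scale-invariant under it, the convergence $\|U_\lambda(t)-e^{it\Delta}U_\lambda^\pm\|_{L_x^2 H_y^1}\to 0$ pulls back to the desired convergence of $\tilde{U}_\lambda$ to the corresponding linear solution $V_\lambda$ as $t\to\pm\infty$.

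The main obstacle is not analytic but bookkeeping: one must carefully justify the identification of the periodic extension $\tilde{\phi}$ on $\mathbb{T}_{\lambda^{-1}}$ with the original $\phi$ on $\mathbb{T}$ (the $a.e.$ identification indicated in the scaling paragraph), and verify that the discrete Fourier series in $y$ behaves compatibly with the change of period so that $H_y^1$ is genuinely preserved. Once this is in place, the theorem follows by pulling the statement of Theorem \ref{le3.11v63} back through the $(t,x)$-rescaling, with no new estimates required.
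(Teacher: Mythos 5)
Your proposal is exactly the paper's argument: the small-cylinder theorem is obtained from the preceding large-scale theorem by applying the rescaling $\tilde U_\lambda(t,x,y)=\lambda\,U_\lambda(\lambda^2 t,\lambda x, y)$, which preserves the $L^2_x$ and $L^4_{t,x}$ Strichartz quantities by $L^2$-criticality in $\mathbb{R}^2$ and carries the scattering states along without any new estimates. One remark on your scaling picture: the rescaling does not ``leave $\Delta_y$ untouched'' while the period independently contracts---rather, the parabolic rescaling in $(t,x)$ makes the $y$-Laplacian enter the equation with a relative factor $\lambda^2$, and it is precisely this weighting that is equivalent to shrinking the period to $2\pi/\lambda$; the direct substitution you propose to carry out would make this mechanism explicit and is the reason the asserted identification with $\mathbb{R}^2\times\mathbb{T}_{\lambda^{-1}}$ holds.
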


\begin{remark}
In the above theorem, we only get the global well-posedness and scattering of the cubic focusing nonlinear Schr\"odinger equation on the small cylinder. It seems difficult to give the global well-posedness and scattering of the focusing nonlinear Schr\"odinger equation on the general cylinder. The main obstacle is a lack of knowledge on the threshold of the global well-posedness, which is closely related to the sharp Gagliardo-Nirenberg inequality on the cylinder.

\end{remark}

\subsection{Finite couple nonlinear Schr\"odinger system}

The argument in the proof of Theorem \ref{th1.2} also works for the $N-$coupled cubic focusing nonlinear Schr\"odinger system,
where $N$ is any positive integer.

Let
\begin{align*}
 \left(L^2(\mathbb{R}^2) \right)^N : = \overbrace{L^2(\mathbb{R}^2) \times L^2(\mathbb{R}^2) \times \cdots \times L^2(\mathbb{R}^2)}^{N-\text{copy}} ,
\end{align*}
we have
\begin{theorem}[GWP \& scattering of the cubic focusing $N-$coupled nonlinear Schr\"odinger system]\label{th1.3}
For any initial data $\vec{u}_0 \in \left( L_x^2 \right)^N $ satisfying $\left\|\vec{u}_0\right\|_{ \left( L_x^2 \right)^N  }   <  \sqrt{ \frac{N}{2N- 1 } }   \, \|Q\|_{L_x^2} $,
 where $Q$ is the ground state of $\Delta_{\mathbb{R}^2} Q - Q = - Q^3$, there exists a global solution
 $\vec{u} = \left\{u_j\right\}_{j\in \mathbb{Z}_N }$ to \eqref{eq1.1} satisfying
\begin{equation*}
\left\|\vec{u}\right\|_{L_{t,x}^4  (\mathbb{R}\times \mathbb{R}^2   )}  \le C,
\end{equation*}
for some constant $C $ depends only on $\left\|\vec{u}_0\right\|_{ \left( L_x^2  \right)^N  }$.
In addition, the solution scatters in $ \left( L_x^2  \right)^N  $ in the sense that there exists $\left\{u_j^{\pm }\right\}_{j \in \mathbb{Z}_N } \in   \left( L^2_x  \right)^N  $
such that
\begin{equation*}
\left\| \left( \sum\limits_{j\in \mathbb{Z}_N  }  \left|  u_j(t) - e^{it\Delta_{\mathbb{R}^2}} u_j^{\pm  }\right|^2 \right)^\frac12  \right\|_{L^2(\mathbb{R}^2 )} \to 0, \text{ as } t\to \pm \infty.
\end{equation*}
\end{theorem}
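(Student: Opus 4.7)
The plan is to follow the concentration-compactness and rigidity scheme from the proof of Theorem~\ref{th1.2}, with three adjustments that reflect the fact that $N$ is finite and that the working space is $(L^2_x)^N$ rather than the weighted $L^2_x h^1$.

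The first step is the variational characterization of the ground state for the $N$-coupled system and the derivation of the threshold. Writing the nonlinear part of the energy as
\begin{align*}
\frac{1}{4}\int_{\mathbb{R}^2}\left(2\left(\sum_{k\in\mathbb{Z}_N}|u_k|^2\right)^2-\sum_{j\in\mathbb{Z}_N}|u_j|^4\right)dx,
\end{align*}
the ansatz $u_j=c_jQ$ with $\sum_j|c_j|^2=1$ reduces it to $\tfrac{1}{4}(2-\sum_j|c_j|^4)\|Q\|_{L^4}^4$. The power-mean inequality gives $\sum_j|c_j|^4\ge 1/N$ with equality when $|c_j|^2=1/N$, so this rank-one symmetric family yields the value $\tfrac{2N-1}{4N}\|Q\|_{L^4}^4$. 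A standard Schwarz-symmetrization and Lagrange-multiplier argument, combined with the scalar sharp Gagliardo--Nirenberg identity $\|Q\|_{L^4}^4=2\|Q\|_{L^2}^2$, shows that this is indeed the global maximum and establishes the vectorial sharp inequality
\begin{align*}
\int_{\mathbb{R}^2}\left(2\left(\sum_k|u_k|^2\right)^2-\sum_j|u_j|^4\right)dx\le \frac{2(2N-1)}{N\|Q\|_{L^2}^2}\,\|\vec u\|_{(L^2)^N}^2\,\|\nabla\vec u\|_{(L^2)^N}^2,
\end{align*}
with equality on $u_j=c_jQ$, $|c_j|^2=1/N$, up to the usual scaling, translation, and phase symmetries. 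The mass bound $\|\vec u\|_{(L^2)^N}<\sqrt{N/(2N-1)}\,\|Q\|_{L^2}$ is then precisely the subcriticality condition needed for coercivity in the remaining concentration-compactness step.

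The second step is the concentration-compactness reduction. Small-data scattering in $(L^2)^N$ is immediate from the $\mathbb{R}^2$ Strichartz estimates and the cubic structure of $\vec F$. A linear profile decomposition in $(L^2)^N$ is obtained by applying Keraani's scalar $L^2(\mathbb{R}^2)$ decomposition to each component and diagonalizing the extracted frames; since $N$ is finite, the summability issues addressed for the infinite case in Theorem~\ref{th1.2} disappear. Feeding this decomposition into the Kenig--Merle contradiction argument yields, under the assumption that Theorem~\ref{th1.3} fails, a nonzero minimal-mass almost periodic solution $\vec u_*\in C^0_t(L^2_x)^N$ whose mass is strictly below $\sqrt{N/(2N-1)}\,\|Q\|_{L^2}$.

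The final step, which I expect to be the main obstacle, is to exclude $\vec u_*$ by the Dodson long-time Strichartz plus frequency-localized interaction Morawetz scheme used for Theorem~\ref{th1.2}. Testing the interaction Morawetz against the aggregated density $\rho=\sum_j|u_j|^2$ and momentum $\sum_j\Im(\bar u_j\nabla u_j)$ produces an integrand whose positivity below the mass threshold follows from the vectorial sharp inequality of the first step. One must, however, carry out the full frequency-localization analysis for the vector nonlinearity $\vec F$: the cross terms $|u_k|^2u_j$ with $k\ne j$ must be shown not to spoil the sign of the Morawetz integrand, and the error terms arising from the long-time Strichartz cutoffs must be shown to transfer cleanly from the scalar cubic NLS on $\mathbb{R}^2$ to $\vec F$. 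Once this is in place, the Morawetz bound forces $\vec u_*\equiv 0$, contradicting its construction.
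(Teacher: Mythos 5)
Your proposal follows the same concentration-compactness/rigidity outline that the paper itself only sketches in Section~\ref{s1se3v30}: derive the vectorial sharp Gagliardo--Nirenberg constant $\tfrac{2(2N-1)}{N\|Q\|_{L^2}^2}$ (this is Theorem~\ref{th3.4v99}, with your power-mean computation reproducing the paper's computation at \eqref{eq:QoptimalC}), reduce non-scattering to a minimal-mass almost periodic solution as in \cite{TVZ2}, and exclude it via Dodson's frequency-localized interaction Morawetz scheme from Section~\ref{se5v30}, noting as the paper does that finite $N$ removes the $h^1$/interchange-of-norms difficulties. This matches the paper's approach, so no further comparison is needed.
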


\begin{remark}
When  $\left\|\vec{u}(0) \right\|_{L^2}^2 =  \frac{N}{2N-1}
 \left\| {Q}\right\|_{L^2}^2 $, we expect a rigidity theorem, and refer to the recent work on the mass-critical NLS when the mass is equal to the threshold of scattering, see \cite{D5,D7,F}.

\end{remark}

When $N= 2  $, the system  \eqref{eq1.1} degenerates into the $2-$coupled nonlinear Schr\"odinger system:
\begin{equation}\label{eq1.6}
\begin{cases}
i\partial_t u_0 + \Delta_{\mathbb{R}^2} u_0 = -  |u_0|^2 u_0 -   2|u_1|^2 u_0,\\
i\partial_t u_1 + \Delta_{\mathbb{R}^2} u_1 = - 2|u_0|^2 u_1 - |u_1|^2 u_1,
\end{cases}
\end{equation}
with $u_j(0,x) = u_{0,j}(x)$ for $j=0,1$. \eqref{eq1.6} is also the non-relativistic limit of the complex-valued cubic focusing nonlinear Klein-Gordon equation in $\mathbb{R}^2$, we refer \cite{CGM,MN}.

As a straightforward consequence of Theorem \ref{th1.3}, we have
\begin{corollary}[Global well-posedness and scattering of the $2-$coupled focusing nonlinear Schr\"odinger system]
\eqref{eq1.6} is globally well-posed for $\vec{u}(0) \in L^2 \times L^2$ satisfying $\left\|\vec{u}(0) \right\|_{L^2}^2 < \frac23 \left\| {Q}\right\|_{L^2}^2 $, and $\vec{u}(t)$ scatters to a free solution as $t \to \pm \infty$.
\end{corollary}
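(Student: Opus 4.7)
The plan is to derive the corollary as a direct specialization of Theorem~\ref{th1.3} at $N=2$. First I would observe that \eqref{eq1.6} is literally the $N=2$ instance of the general $N$-coupled system \eqref{eq6.1v83}: for $j=0$ the coupling $2\sum_{k\neq j}|u_k|^2 u_j$ reduces to $2|u_1|^2 u_0$, while for $j=1$ it reduces to $2|u_0|^2 u_1$, matching the two equations of \eqref{eq1.6} exactly. Any datum $\vec{u}(0)=(u_{0,0},u_{0,1})\in L^2(\mathbb{R}^2)\times L^2(\mathbb{R}^2)$ is therefore an element of $(L^2_x)^2$ with $\|\vec{u}(0)\|_{(L^2_x)^2}^2=\|u_{0,0}\|_{L^2}^2+\|u_{0,1}\|_{L^2}^2$.

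Next I would match the two smallness thresholds. Evaluating $\sqrt{N/(2N-1)}$ at $N=2$ gives $\sqrt{2/3}$, so the subthreshold hypothesis of Theorem~\ref{th1.3} becomes
\[
\|\vec{u}(0)\|_{(L^2_x)^2}^2 < \tfrac{2}{3}\|Q\|_{L^2}^2,
\]
which is exactly the hypothesis of the corollary.

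With these two identifications in hand, Theorem~\ref{th1.3} produces a global solution $\vec{u}=(u_0,u_1)$ of \eqref{eq1.6} with a finite $L^4_{t,x}(\mathbb{R}\times\mathbb{R}^2)$ bound depending only on $\|\vec{u}(0)\|_{L^2\times L^2}$, together with asymptotic states $u_0^{\pm},u_1^{\pm}\in L^2(\mathbb{R}^2)$ such that
\[
\Bigl(|u_0(t)-e^{it\Delta_{\mathbb{R}^2}}u_0^{\pm}|^2+|u_1(t)-e^{it\Delta_{\mathbb{R}^2}}u_1^{\pm}|^2\Bigr)^{1/2}\to 0\quad\text{in } L^2(\mathbb{R}^2)
\]
as $t\to\pm\infty$; this is precisely $L^2\times L^2$ scattering of $\vec{u}$. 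Since all analytical content is already packaged in Theorem~\ref{th1.3}, there is no genuine obstacle beyond these bookkeeping identifications; the corollary is a literal specialization, and the only step that even nominally requires attention is the numerical check that $\sqrt{N/(2N-1)}|_{N=2}=\sqrt{2/3}$.
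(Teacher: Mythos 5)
Your proof is correct and takes exactly the approach the paper intends: the paper introduces the corollary with the phrase ``As a straightforward consequence of Theorem~\ref{th1.3}'' and gives no further argument, so the content is precisely the two bookkeeping identifications you carry out (that \eqref{eq1.6} is the $N=2$ case of \eqref{eq6.1v83}, and that $\sqrt{N/(2N-1)}\big|_{N=2}=\sqrt{2/3}$).
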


In this article, we first establish the variational characteristic of the corresponding ground state of \eqref{eq1.1} for both $N = \infty$ and $N< \infty$. First, we consider the focusing mass-critical nonlinear Schr\"odinger equation
\begin{align*}
i\partial_t u + \Delta_{\mathbb{R}^2} u = - |u|^2 u.
\end{align*}
The corresponding ground state is given by a solution to the following elliptic equation
\begin{align*}
\Delta_{\mathbb{R}^2} Q  - Q = - Q^3
\end{align*}
which is known to be unique up to modulo space translations and multiplication by $e^{i \theta}$, see \cite{Kw}. Hence the threshold of the scattering is also unique \cite{W}.
However, for \eqref{eq1.1}, there are many positive solutions up to modulo space translations and multiplication by $e^{i \theta}$ of the corresponding elliptic equation
\begin{align*}
\Delta_{\mathbb{R}^2} Q_j - Q_j = - \sum\limits_{(j_1,j_2,j_3) \in \mathcal{R}(j)} Q_{j_1} Q_{j_2} Q_{j_3}.
\end{align*}
We refer to  \cite{CW,I,LW,Si,WY} for the finite coupled case, this reflects the complex behaviour of the nonlinear Schr\"odinger system. However, we find the best constant of the sharp Gagliardo-Nirenberg
inequality is unique and thus the threshold of the scattering is also unique, which is $\frac12 \|Q\|_{L_x^2}^2$.
\begin{remark}\label{re1.5v85}
We have
\begin{align*}
\partial_t \left( \int |x|^2 \sum\limits_{j \in \mathbb{Z}} |u_j(t,x)|^2 \,\mathrm{d}x\right)
= 4 \sum\limits_{j \in \mathbb{Z}} \int x \cdot \Im \left( \bar{u}_j \nabla u_j \right) \,\mathrm{d}x
\end{align*}
and
\begin{align*}
\partial_t  \left(  4 \sum\limits_{j \in \mathbb{Z}} \int x \cdot \Im \left( \bar{u}_j \nabla u_j \right) \,\mathrm{d}x \right) = 16 E(\vec{u}).
\end{align*}
Therefore, when $ \left\|\vec{u} \right\|_{L_x^2 l^2}^2 < \frac12 \|Q\|_{L_x^2}^2$, we see
\begin{align*}
&\quad \partial_t^2 \left( \int |x|^2 \sum\limits_{j \in \mathbb{Z}} |u_j(t,x)|^2 \,\mathrm{d}x\right) =
\partial_t  \left(  4 \sum\limits_{j \in \mathbb{Z}} \int x \cdot \Im \left( \bar{u}_j \nabla u_j \right) \,\mathrm{d}x \right)\\
& = 16 E(\vec{u})
= 8 \left\|\nabla \vec{u} \right\|_{L_x^2 l^2}^2 - 4 \mathcal{N}( \vec{u})
 \ge 8 \left( \left\|\nabla \vec{u} \right\|_{L_x^2 l^2}^2 - \frac{ \left\|\vec{u} \right\|_{L_x^2 l^2}^2 }{ \frac12 \|Q\|_{L_x^2}^2}  \left\|\nabla \vec{u} \right\|_{L_x^2 l^2}^2\right) > 0.
\end{align*}

On the other hand, for any $\epsilon > 0$, there exists $\vec{u}(0)$ which satisfies $\|\vec{u}(0)\|_{L^2_x  l^2} = \frac12\|Q\|_{L^2} + \epsilon$, with $E(\vec{u}(0)) < 0$, $\int |x|^2 \| \vec{u}(0,x)\|_{l^2}^2 \,\mathrm{d}x < \infty$, and $ \sum\limits_{j \in \mathbb{Z}} \int  x\cdot \Im ( \bar{u}_j(t,x) \nabla_x u_j(t,x)) \,\mathrm{d}x < \infty$.
Then the solution of \eqref{eq1.1} blows up in finite time for such initial data $\vec{u}(0)$. Thus the threshold of global well-posedness and scattering for \eqref{eq1.1} is sharp.

\end{remark}

Once finding the exact threshold of global well-posednss and scattering, we will show the global well-posedness and scattering below the threshold. We will focus on the cubic focusing infinite coupled nonlinear Schr\"odinger system because this case is more difficult than the cubic focusing finite coupled nonlinear Schr\"odinger system, in fact, the finite case is a trivialization of the infinite case. We now explain the idea of the proof of the global well-posedness and scattering below the threshold. 
To prove the global well-posedness and scattering below the threshold, it suffices to show the finiteness of the $L_{t,x}^4 l^2$ norm of the solution. By using the concentration-compactness/rigidity method, we get an almost periodic solution which is almost periodic module the scaling, Galilean transformation and spatial translation. So we only need to exclude the almost periodic solution. To exclude the almost periodic solution, we divide the almost periodic solution into two different kinds according to whether $\int_0^\infty N(t)^3 \,\mathrm{d}t$ is finite or not. If $\int_0^\infty N(t)^3 \,\mathrm{d}t$ is finite, the almost periodic solution has higher regularity and belongs to $H_x^3 l^2$. So we can exclude the almost periodic solution by exploiting the conservation of energy. Otherwise, i.e. if $\int_0^\infty N(t)^3 \,\mathrm{d} t$ is infinite, the exclusion of the almost periodic solution relies on the argument of B. Dodson \cite{D4}. We first modify the scale function $N(t)$ so that it oscillates less, and then we exclude the solution by exploiting the low frequency localized interaction Morawetz estimate.

The remaining part of the paper is organized as follows. We will focus on the proof of Theorem \ref{th1.2} from Section \ref{se2} to Section \ref{se5v30}, precisely, we give the local well-posedness theory and small data scattering in Section \ref{se2}.
In Section \ref{se3v30}, we establish the variational estimate of the ground state.
{In Section \ref{se4v30}, we reduce the non-scattering to the existence of the almost periodic solution.} In Section \ref{se5v30}, we preclude the almost periodic solution by using the method of B. Dodson developed in \cite{D4}.
In Section \ref{s1se3v30}, we give a sketch of the scattering of the $N-$coupled NLS.

\subsection{Notation and Preliminaries}
We will use the notation $X\lesssim Y$ whenever there exists some constant $C>0$ so that $X \le C Y$. Similarly, we will use $X \sim Y$ if
$X\lesssim Y \lesssim X$.

For the vector-valued function $\vec{f}(t,x)  = \left\{f_j(t,x)  \right\}_{j\in \mathbb{Z} }$, we denote
\begin{align*}
\left\| \vec{f}\right\|_{L_t^p L_x^q h^s } := \bigg \|\Big(\sum_{j \in \mathbb{Z}}  \langle j \rangle^{2s} \left| f_j(t,x) \right|^2 \Big)^\frac12 \bigg\|_{L_t^p L_x^q},
\end{align*}
where $0 \le s \le 1$. When $s = 0$, we write $L_t^p L_x^q h^0$ to be $L_t^p L_x^q l^2$.

We define the discrete nonisotropic Sobolev space. For $\vec{\phi} =  \left\{\phi_k \right\}_{k\in \mathbb{Z}}$ a sequence of real-variable functions, we define
\begin{align*}
 H^{s_1}_x h^{s_2} := \left\{ \vec{\phi} = \left\{ \phi_k\right\}_{k\in \mathbb{Z}} : \left\|\vec{\phi}\right\|_{H_x^{s_1} h^{s_2}}  =   \bigg \| \bigg( \sum\limits_{k\in \mathbb{Z}} \langle k\rangle^{2s_2} \left|\langle \nabla_x\rangle^{ s_1} \phi_k (x)  \right|^2  \bigg)^\frac12 \bigg\|_{L_x^{2}}  < \infty \right \},
\end{align*}
where $s_1,s_2 \ge 0$. In particular, when $s_1 = 0$, we denote the space $H_x^{s_1} h^{s_2}$ to be $L_x^2 h^{s_2}$.

We also define
\begin{align*}
\mathcal{N} \left(\vec{f} \right)  = \int_{\mathbb{R}^2}  N \left(\vec{f} \right) \,\mathrm{d}x
 := \int_{\mathbb{R}^2} \sum\limits_{j \in \mathbb{Z}} \left( \bar{f}_j \vec{F}_j \left(\vec{f} \right)\right)(x)
 \,\mathrm{d}x.
\end{align*}
We also have the following important observation.
\begin{lemma}[The equivalence of space-time norms]
\begin{align}\label{eq1.5v83}
\mathcal{N}(\vec{u}) \sim  \left\|\vec{u} \right\|_{L_{t,x}^4 l^2}^4.
\end{align}
\end{lemma}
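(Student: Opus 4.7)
The plan is to prove both inequalities pointwise in $(t,x)$ by means of the explicit formula (1.2) for $\vec{F}_j$, and then integrate. Using
\begin{align*}
\vec{F}_j(\vec{u}) = 2\bigg(\sum_{k\in\mathbb{Z}}|u_k|^2\bigg) u_j - |u_j|^2 u_j,
\end{align*}
I would first contract against $\bar{u}_j$ and sum in $j$ to get the pointwise identity
\begin{align*}
N(\vec{u}) = \sum_{j\in\mathbb{Z}} \bar{u}_j \vec{F}_j(\vec{u}) = 2\bigg(\sum_{k\in\mathbb{Z}}|u_k|^2\bigg)^2 - \sum_{j\in\mathbb{Z}} |u_j|^4.
\end{align*}

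Next, the elementary $\ell^p$ nesting gives $\sum_j |u_j|^4 \le \big(\sum_j |u_j|^2\big)^2$, with the trivial lower bound $\sum_j |u_j|^4 \ge 0$. Substituting these into the identity above yields the two-sided pointwise bound
\begin{align*}
\bigg(\sum_{k\in\mathbb{Z}}|u_k|^2\bigg)^2 \le N(\vec{u}) \le 2\bigg(\sum_{k\in\mathbb{Z}}|u_k|^2\bigg)^2,
\end{align*}
so $N(\vec{u}(t,x)) \sim \|\vec{u}(t,x)\|_{l^2}^4$ uniformly in $(t,x)$. The positivity of $N$ can also be seen independently (and more transparently) from the resonant rewriting appearing in the energy, namely
\begin{align*}
N(\vec{u}) = \sum_{\substack{j\in\mathbb{Z},\\ n\in\mathbb{N}}} \bigg|\sum_{\substack{j_1-j_2=j,\\ j_1^2-j_2^2=n}} u_{j_1}\bar{u}_{j_2}\bigg|^2,
\end{align*}
which is a sum of squares and hence nonnegative.

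Finally, integrating the pointwise equivalence in $x$ and then in $t$ and recalling the definition of $\mathcal{N}$ gives
\begin{align*}
\mathcal{N}(\vec{u}) \sim \int\!\!\int_{\mathbb{R}\times\mathbb{R}^2} \bigg(\sum_{k\in\mathbb{Z}}|u_k(t,x)|^2\bigg)^2 \,\mathrm{d}x\,\mathrm{d}t = \|\vec{u}\|_{L^4_{t,x} l^2}^4,
\end{align*}
which is the claimed equivalence. There is essentially no obstacle here: the whole content is the algebraic reduction of $\sum_j \bar{u}_j \vec{F}_j(\vec{u})$ to a polynomial in the single quantity $\sum_k|u_k|^2$, after which $\ell^4 \subset \ell^2$ finishes the job.
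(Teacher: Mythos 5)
Your proof is correct and follows essentially the same route as the paper: derive the pointwise identity $N(\vec{u}) = 2\bigl(\sum_j |u_j|^2\bigr)^2 - \sum_j |u_j|^4$, then sandwich it between constant multiples of $\bigl(\sum_j |u_j|^2\bigr)^2$ via the elementary inequality $\sum_j |u_j|^4 \le \bigl(\sum_j |u_j|^2\bigr)^2$ together with nonnegativity of $\sum_j |u_j|^4$, and integrate. Your upper constant $2$ is slightly sharper than the paper's stated $3$, and your aside on positivity via the sum-of-squares resonant rewriting is a nice (if unnecessary) cross-check, but the substance is identical.
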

\begin{proof}
By the elementary inequality
\begin{align*}
\left( \sum\limits_{j \in \mathbb{Z}} |u_j|^4 \right) \le   \left( \sum\limits_{j \in \mathbb{Z}} |u_j|^2 \right)^2,
\end{align*}
we have
\begin{align*}
\left( \sum\limits_{j \in \mathbb{Z}} |u_j|^2 \right)^2 \le
2 \left( \sum\limits_{j \in \mathbb{Z}} |u_j|^2 \right)^2 - \sum\limits_{j \in \mathbb{Z}} |u_j|^4
\le 3 \left( \sum\limits_{j \in \mathbb{Z}} |u_j|^2 \right)^2.
\end{align*}
Thus
\begin{align*}
\sum\limits_{j \in \mathbb{Z}} \sum\limits_{ (j_1,j_2,j_3) \in \mathcal{R}(j)} \bar{u}_j u_{j_1} \bar{u}_{j_2} u_{j_3} = 2 \left( \sum\limits_{j \in \mathbb{Z}} |u_j|^2 \right)^2 - \sum\limits_{j \in \mathbb{Z}} |u_j|^4
\sim \left( \sum\limits_{j \in \mathbb{Z}} |u_j|^2 \right)^2 \sim \|\vec{u}\|_{l^2}^4.
\end{align*}
Therefore, we get \eqref{eq1.5v83}.

\end{proof}
\begin{lemma}
By Minkowski's inequality, interpolation, and H\"older's inequality, we have
\begin{align}\label{eq5.27v32}
\| \vec{v} \|_{L_x^4 l^2}
 &  \lesssim \left( \sum\limits_{j \in \mathbb{Z}} \|v_j(x) \|_{L_x^2} \|v_j(x) \|_{\dot{H}_x^1} \right)^\frac12
\lesssim %\left( \left( \sum\limits_{j \in \mathbb{Z}} \|v_j(x) \|_{L_x^2}^2 \right)^\frac12 \left( \sum\limits_{j \in \mathbb{Z}} \|v_j(x) \|_{\dot{H}^1_x}^2 \right)^\frac12 \right)^\frac12
%\sim
\|\vec{v}\|_{L_x^2 l^2}^\frac12 \|\vec{v}\|_{\dot{H}^1_x l^2}^\frac12.
\end{align}
\end{lemma}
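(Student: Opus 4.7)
The inequality splits naturally into two separate estimates, both of which are essentially standard: a Minkowski exchange plus a Gagliardo--Nirenberg embedding for the first, followed by Cauchy--Schwarz in the $j$--sum for the second.

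For the first $\lesssim$, I would begin by commuting the $L_x^4$ and $\ell^2$ norms. Since the outer Lebesgue exponent $4$ is larger than the inner $2$, Minkowski's integral inequality gives
\begin{equation*}
\|\vec v\|_{L_x^4 \ell^2} = \Bigl\|\bigl(\textstyle\sum_{j}|v_j|^2\bigr)^{1/2}\Bigr\|_{L_x^4} \le \Bigl(\sum_{j\in\mathbb Z}\|v_j\|_{L_x^4}^2\Bigr)^{1/2}.
\end{equation*}
Then I would apply the two--dimensional Gagliardo--Nirenberg inequality $\|f\|_{L^4(\mathbb R^2)}^2 \lesssim \|f\|_{L^2(\mathbb R^2)}\|f\|_{\dot H^1(\mathbb R^2)}$ componentwise, which upgrades each summand and yields
\begin{equation*}
\|\vec v\|_{L_x^4 \ell^2}^2 \lesssim \sum_{j\in\mathbb Z}\|v_j\|_{L_x^2}\|v_j\|_{\dot H_x^1}.
\end{equation*}

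For the second $\lesssim$, I would apply the Cauchy--Schwarz inequality to the discrete sum in $j$:
\begin{equation*}
\sum_{j\in\mathbb Z}\|v_j\|_{L_x^2}\|v_j\|_{\dot H_x^1} \le \Bigl(\sum_{j\in\mathbb Z}\|v_j\|_{L_x^2}^2\Bigr)^{1/2}\Bigl(\sum_{j\in\mathbb Z}\|v_j\|_{\dot H_x^1}^2\Bigr)^{1/2} = \|\vec v\|_{L_x^2\ell^2}\,\|\vec v\|_{\dot H_x^1 \ell^2}.
\end{equation*}
Taking the square root recovers the claimed bound.

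There is no real obstacle here; the only mildly non-trivial point is recognising that Minkowski applies in the correct direction (because $4\ge 2$), and invoking the scale-invariant 2D Gagliardo--Nirenberg at the level of each coordinate $v_j$.
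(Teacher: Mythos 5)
Your proof is correct and follows exactly the route the paper itself indicates in the lemma's statement: Minkowski's integral inequality (valid since $4\ge 2$) to pass to $\ell^2_j L^4_x$, the two-dimensional Gagliardo--Nirenberg interpolation $\|f\|_{L^4}^2\lesssim\|f\|_{L^2}\|f\|_{\dot H^1}$ componentwise, and then Cauchy--Schwarz (H\"older) in $j$. There is nothing to add.
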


\section{Local well-posedness and small data scattering}\label{se2}
In this section, we will review the local wellposedness, small data scattering and the stability theory, for detailed exposition, %the detail proof,
 we refer to \cite{CGYZ,CGZ,YZ}.

\begin{definition}[Strichartz admissible pair]
We call a pair $(q,r)$ is Strichartz admissible pair if $2 < q \le \infty$, $2\le r < \infty$, and $\frac1q + \frac1r = \frac12$.
\end{definition}
\begin{theorem}[Strichartz estimate,\cite{CGYZ,TV,TV2,YZ}]\label{th2.1v31}
For any $\alpha = 0,1$,
\begin{align*}
  \left\|e^{it \Delta} \vec{f}\right\|_{L_t^q L_x^r h^\alpha}   \lesssim \left\|\vec{f}\right\|_{L_x^2 h^\alpha}   \text{ and }
 \left\|\int_0^t e^{i(t-s) \Delta} \vec{F}(s,x) \,\mathrm{d}s \right\|_{L_t^q L_x^r h^\alpha} \lesssim \left\|\vec{F}\right\|_{L_t^{\tilde{q}'} L_x^{\tilde{r}'} h^\alpha},
\end{align*}
where $(q,r)$ and $(\tilde{q}, \tilde{r})$ are Strichartz admissible.
\end{theorem}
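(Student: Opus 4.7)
The plan is to bootstrap the vector-valued Strichartz estimates directly from the scalar Strichartz estimates on $\mathbb{R}^2$, using two observations that trivialize the indexing in $j$. First, the propagator $e^{it\Delta_{\mathbb{R}^2}}$ acts on $\vec{f} = \{f_j\}_{j\in\mathbb{Z}}$ componentwise, $(e^{it\Delta}\vec f)_j = e^{it\Delta}f_j$. Second, the weight $\langle j\rangle^{\alpha}$ defining $h^\alpha$ is a pure multiplier in $j$ and so commutes with $e^{it\Delta}$. Replacing $f_j$ by $\langle j\rangle^\alpha f_j$ therefore reduces matters to the weightless case $\alpha = 0$, i.e.\ to proving $\ell^2$-valued Strichartz estimates for $e^{it\Delta}$ on $\mathbb{R}^2$.

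For the homogeneous bound, I would first rewrite
\begin{align*}
\big\| e^{it\Delta}\vec f \big\|_{L^q_t L^r_x \ell^2_j}^2 = \Big\| \sum_{j\in\mathbb{Z}} |e^{it\Delta}f_j|^2 \Big\|_{L^{q/2}_t L^{r/2}_x}.
\end{align*}
Since every admissible pair in $\mathbb{R}^2$ has $q > 2$ and $r \ge 2$, the mixed-norm space $L^{q/2}_t L^{r/2}_x$ is a genuine Banach space, and Minkowski's inequality moves the discrete sum outside:
\begin{align*}
\big\| e^{it\Delta}\vec f \big\|_{L^q_t L^r_x \ell^2_j}^2 \le \sum_{j\in\mathbb{Z}} \big\| e^{it\Delta}f_j \big\|_{L^q_t L^r_x}^2 \lesssim \sum_{j\in\mathbb{Z}} \|f_j\|_{L^2_x}^2 = \|\vec f\|_{L^2_x\ell^2_j}^2,
\end{align*}
where the second inequality is the scalar Keel--Tao estimate applied to each $f_j$.

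For the inhomogeneous estimate I would do the same on the left-hand side, combining Minkowski in $L^{q/2}_t L^{r/2}_x$ with scalar inhomogeneous Strichartz to get
\begin{align*}
\Big\| \int_0^t e^{i(t-s)\Delta}\vec F(s)\, ds \Big\|_{L^q_t L^r_x\ell^2_j}^2 \lesssim \sum_{j\in\mathbb{Z}} \|F_j\|_{L^{\tilde q'}_t L^{\tilde r'}_x}^2.
\end{align*}
To convert the right-hand side back into the vector-valued dual norm I would invoke the Minkowski embedding $\ell^2_j(L^{\tilde q'}_t L^{\tilde r'}_x) \hookrightarrow L^{\tilde q'}_t L^{\tilde r'}_x(\ell^2_j)$, valid precisely because $\tilde q',\tilde r' \le 2$ (apply it first in the $x$ variable with exponent $\tilde r' \le 2$, then in the $t$ variable with exponent $\tilde q' \le 2$).

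Realistically there is no serious obstacle here: once the scalar Strichartz estimates on $\mathbb{R}^2$ are taken as input, the vector-valued upgrade to the $h^\alpha$ setting is a bookkeeping exercise in Minkowski's inequality, with the two applications going in opposite directions on the left- and right-hand sides thanks to $q/2, r/2 \ge 1$ and $\tilde q'/2, \tilde r'/2 \le 1$. The argument extends verbatim to any $h^s$ since $\langle j\rangle^s$ never interacts with $e^{it\Delta}$; this diagonal structure is exactly what allows the resonant infinite-coupled system to be analyzed by Strichartz methods throughout the paper.
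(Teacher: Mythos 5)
Your proof is correct. Note that the paper itself gives no proof of this theorem---it is stated as a cited black-box result (referring to [CGYZ, TV, TV2, YZ])---so strictly speaking there is nothing in the paper to compare against. The argument you supply, however, is the standard one that those references implement: since $e^{it\Delta_{\mathbb{R}^2}}$ acts diagonally in $j$ and $\langle j\rangle^\alpha$ commutes with it, the $h^\alpha$ weight is absorbed into $f_j$, reducing to $\alpha=0$; the homogeneous estimate then follows by squaring, applying the triangle inequality in the Banach space $L^{q/2}_t L^{r/2}_x$ (legitimate because admissibility forces $q>2$ and $r\ge 2$) to pull $\ell^2_j$ outside, and invoking scalar Strichartz on each component; and for the inhomogeneous estimate the extra step of returning $\sum_j \|F_j\|_{L^{\tilde q'}_t L^{\tilde r'}_x}^2$ to $\|\vec F\|_{L^{\tilde q'}_t L^{\tilde r'}_x \ell^2_j}^2$ is exactly the Minkowski embedding $\ell^2_j(L^{\tilde q'}_t L^{\tilde r'}_x) \hookrightarrow L^{\tilde q'}_t L^{\tilde r'}_x(\ell^2_j)$, which holds because the dual exponents satisfy $\tilde q', \tilde r' \le 2$. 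The two applications of Minkowski go in opposite directions for exactly the reason you identify ($q/2, r/2 \ge 1$ on the left; $\tilde q', \tilde r' \le 2$ on the right), and your observation that the argument works verbatim for all $h^s$ is also correct. No gap.
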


By standard arguments with Theorem \ref{th2.1v31}, we obtain
\begin{proposition}[Local wellposedness and small data scattering]\label{lea1}
Let $\vec{u}(0)= \{u_p(0)\}_p \in  L_x^2 h^1$ satisfying $\|\vec{u}_0\|_{ L^2_x h^1} \le E$ for some $E > 0$. Then
there exists an open interval $ I \ni 0$ and a unique solution $\vec{u}(t)$ of \eqref{eq1.1} in $C_t^0 L_x^2 h^1(I\times \mathbb{R}^2 \times \mathbb{Z} ) \cap L_{t,x}^4 h^1(I \times \mathbb{R}^2 \times \mathbb{Z} )$.
In addition, if $\vec{u}(0) \in  H_x^k h^\sigma(\RR^2 \times \ZZ)$ for some $\sigma \ge 1$ and $k\ge 0$, then $\vec{u}(t) \in C_t^0 H_x^k h^\sigma(I\times \mathbb{R}^2 \times \mathbb{Z}  )$.
Moreover, there exists $\delta_0 > 0$ such that if $E \le \delta_0$, then $\vec{u}(t)$ is global and scatters in positive and negative infinite time.
\end{proposition}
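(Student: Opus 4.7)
\medskip

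The plan is to run a Banach fixed-point argument on the Duhamel formulation
\[
\vec{u}(t) = e^{it\Delta}\vec{u}_0 + i \int_0^t e^{i(t-s)\Delta}\vec{F}(\vec{u}(s))\,\mathrm{d}s,
\]
in the complete metric space
\[
X_I := \Bigl\{\vec{u} \in C_t^0 L_x^2 h^1(I\times\mathbb{R}^2\times\mathbb{Z}) \cap L_{t,x}^4 h^1(I\times\mathbb{R}^2\times\mathbb{Z}):\ \|\vec{u}\|_{L_t^\infty L_x^2 h^1} + \|\vec{u}\|_{L_{t,x}^4 h^1} \le 2\eta\Bigr\},
\]
equipped with the $L_t^\infty L_x^2 l^2 \cap L_{t,x}^4 l^2$ distance, for a small parameter $\eta>0$ to be chosen. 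By Theorem \ref{th2.1v31} with admissible pair $(4,4)$, the linear part satisfies $\|e^{it\Delta}\vec{u}_0\|_{L_t^\infty L_x^2 h^1} \le C\|\vec{u}_0\|_{L_x^2 h^1}$ and $\|e^{it\Delta}\vec{u}_0\|_{L_{t,x}^4 h^1(\mathbb{R})} \le C E$; for general $E$ we choose $I=[-T,T]$ small enough that the latter drops below $\eta$, while for $E\le \delta_0$ with $\delta_0$ small we may take $I=\mathbb{R}$ since the free evolution is already $\le \eta$ globally.

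The heart of the matter is the trilinear estimate
\begin{equation}\label{eq:trilin-plan}
\bigl\|\vec{F}(\vec{u})\bigr\|_{L_{t,x}^{4/3} h^1} \lesssim \|\vec{u}\|_{L_{t,x}^4 h^1}^{3},
\end{equation}
together with the analogous difference estimate
\[
\bigl\|\vec{F}(\vec{u}) - \vec{F}(\vec{v})\bigr\|_{L_{t,x}^{4/3} l^2} \lesssim \bigl(\|\vec{u}\|_{L_{t,x}^4 h^1}^{2} + \|\vec{v}\|_{L_{t,x}^4 h^1}^{2}\bigr)\|\vec{u}-\vec{v}\|_{L_{t,x}^4 l^2}.
\]
Here I exploit the closed-form representation $\vec{F}_j(\vec{u}) = 2\bigl(\sum_{k\in\mathbb{Z}}|u_k|^2\bigr)u_j - |u_j|^2 u_j$ from \eqref{added1}, which gives the pointwise bound $|\vec{F}_j(\vec{u})| \lesssim \|\vec{u}\|_{l^2}^{2}\,|u_j|$ and hence $\|\vec{F}(\vec{u})\|_{h^1} \lesssim \|\vec{u}\|_{l^2}^{2}\|\vec{u}\|_{h^1}$; H\"older in $(t,x)$ with exponents $\tfrac{1}{4/3} = \tfrac{2}{4}+\tfrac{1}{4}$ and the embedding $h^1 \hookrightarrow l^2$ then yield \eqref{eq:trilin-plan}. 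Plugging \eqref{eq:trilin-plan} and its difference version into the Strichartz inhomogeneous bound shows that $\Phi(\vec{u}):=e^{it\Delta}\vec{u}_0 + i\int_0^t e^{i(t-s)\Delta}\vec{F}(\vec{u})\,\mathrm{d}s$ maps $X_I$ into itself and is a contraction once $C\eta^2 < \tfrac12$; the unique fixed point is the desired solution.

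For the persistence of regularity, I observe that if one momentarily denotes $w_{k,\beta}(x) := \langle\nabla_x\rangle^k\langle j\rangle^\sigma u_j$ and differentiates the equation, the nonlinearity becomes a sum of cubic terms in which at most one factor carries $\langle\nabla_x\rangle^k\langle j\rangle^\sigma$; the same pointwise majorization and H\"older argument used for \eqref{eq:trilin-plan} — now with $h^\sigma$ in place of $h^1$ and the fractional Leibniz rule in $x$ — gives $\|\vec{F}(\vec{u})\|_{L_{t,x}^{4/3}H_x^k h^\sigma} \lesssim \|\vec{u}\|_{L_{t,x}^4 h^1}^{2}\|\vec{u}\|_{L_{t,x}^4 H_x^k h^\sigma}$. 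Since the $L_{t,x}^4 h^1$ norm is already controlled on $I$, a second Strichartz bootstrap closes the regularity estimate. Finally, under the smallness hypothesis $E \le \delta_0$ the solution is global with $\|\vec{u}\|_{L_{t,x}^4 h^1(\mathbb{R})} \lesssim E$, and for $t_2>t_1$
\[
\bigl\|e^{-it_2\Delta}\vec{u}(t_2) - e^{-it_1\Delta}\vec{u}(t_1)\bigr\|_{L_x^2 h^1} \lesssim \|\vec{u}\|_{L_{t,x}^4 h^1([t_1,t_2])}^{3} \to 0
\]
as $t_1,t_2\to +\infty$, producing the scattering state $\vec{u}^+ := \lim_{t\to+\infty} e^{-it\Delta}\vec{u}(t)$ in $L_x^2 h^1$; the negative-time case is identical. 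The main technical step is \eqref{eq:trilin-plan} together with its Leibniz-type extension to $H_x^k h^\sigma$, but the explicit formula \eqref{added1} keeps this at the level of H\"older and Minkowski inequalities with no genuine combinatorial obstruction from the resonance set $\mathcal{R}(j)$.
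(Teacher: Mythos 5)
Your proposal is correct and is exactly the standard contraction argument that the paper invokes without writing out (the paper simply says ``by standard arguments with Theorem~\ref{th2.1v31}'' and refers to \cite{CGYZ,CGZ,YZ} for details). The key observation you rely on — the closed-form expression $\vec{F}_j(\vec{u}) = 2\bigl(\sum_{k}|u_k|^2\bigr)u_j - |u_j|^2u_j$ from \eqref{added1}, which gives the pointwise bound $\|\vec{F}(\vec{u})\|_{h^1}\lesssim\|\vec{u}\|_{l^2}^2\|\vec{u}\|_{h^1}$ and collapses the resonant sum into a vector-valued cubic NLS nonlinearity — is precisely what makes the trilinear H\"older/Strichartz estimate, and hence the whole local theory and small-data scattering, go through with no combinatorial issues.
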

%\begin{corollary}[Persistence of regularity]\label{co6.533}
%Suppose $\vec{u}_0 \in L_x^2 h^1(\RR^2 \times \ZZ)$ and $\vec{u} \in C_t^0 L_x^2 h^1(\mathbb{R} \times \mathbb{R}^2 \times \mathbb{Z})$ is the solution to \eqref{eq1.1}.
%Suppose also that $\vec{v}_0 \in H^4_x h^5(\RR^2 \times \ZZ)$ satisfies
%\begin{align*}
%\left\|\vec{u}_0 - \vec{v}_0 \right\|_{L_x^2 h^1} \lesssim \epsilon,
%\end{align*}
%and that $\vec{v}$ is the solution to \eqref{eq1.1} with the initial data $\vec{v}(0) = \vec{v}_0$. Then
%\begin{align*}
%\left\|\vec{v}\right\|_{L_t^\infty H_x^4 h^5} + \left\| \left( \sum_{p\in \mathbb{Z} } \langle p \rangle^{10}  \left|  \left(\langle \nabla_x \rangle^4 v_p\right)(t,x)\right|^2 \right)^\frac12   \right\|_{L_{t,x}^4(\mathbb{R} \times \mathbb{R}^2 )} & \lesssim_{\left\|\vec{u}_0\right\|_{L_x^2 h^1}} \left\|\vec{v}_0\right\|_{H^4_x h^5},\\
%\left\|\vec{u} - \vec{v} \right\|_{L_t^\infty L_x^2 h^1 \cap L^4_{t,x}h^1(\mathbb{R} \times \mathbb{R}^2 \times \ZZ)} & \lesssim_{\left\|\vec{u}_0\right\|_{L_x^2 h^1}}  \epsilon,
%\end{align*}
%and there exist $\vec{v}^\pm \in H^4_x h^5$ such that
%\begin{align*}
%\left\| \left(\sum_{p\in \mathbb{Z} } \langle p\rangle^2 \left|\left(v_p(t)- e^{it\Delta_{\mathbb{R}^2 }} v_p^\pm\right)(x)\right|^2\right)^\frac12 \right\|_{L_x^2(\mathbb{R}^2 )} \to 0, \text{ as } t\to \pm\infty.
%\end{align*}
%\end{corollary}
\begin{theorem}[Scattering norm]\label{th2.4v103} 
If the solution $\vec{u}$ of \eqref{eq1.1} satisfies
\begin{align*}
\left\|\vec{u}\right\|_{L_{t,x}^4 l^2(\mathbb{R}\times \mathbb{R}^2 \times \ZZ ) } < \infty,
\end{align*}
we have scattering in $L_x^2 h^1$, that is there exist $\vec{u}^\pm \in L_x^2 h^1$ such that
\begin{align*}
\left\|\vec{u}(t) - e^{it \Delta} \vec{u}^\pm \right\|_{L_x^2 h^1} \to 0, \text{ as } t \to \pm \infty.
\end{align*}
\end{theorem}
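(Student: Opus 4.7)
The plan is to first upgrade the hypothesis from a finite $L_{t,x}^4 l^2$ norm to a finite $L_{t,x}^4 h^1$ norm (persistence of regularity), and then run the standard Duhamel/dual Strichartz argument to produce the asymptotic states $\vec{u}^\pm$.

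\textbf{Step 1: Persistence of regularity.} Since $\|\vec{u}\|_{L_{t,x}^4 l^2(\mathbb{R} \times \mathbb{R}^2 \times \mathbb{Z})} < \infty$, I partition $\mathbb{R}$ into finitely many subintervals $I_1, \ldots, I_M$ on each of which $\|\vec{u}\|_{L_{t,x}^4 l^2(I_k)} \leq \eta$ for a small constant $\eta > 0$ to be chosen. On a given $I_k = [t_k, t_{k+1}]$, I apply Strichartz (Theorem \ref{th2.1v31}) with $\alpha = 1$ to the Duhamel formula
\begin{equation*}
\vec{u}(t) = e^{i(t-t_k)\Delta} \vec{u}(t_k) + i \int_{t_k}^{t} e^{i(t-s)\Delta} \vec{F}(\vec{u}(s))\,\mathrm{d}s.
\end{equation*}
The key nonlinear estimate is a Leibniz-type bound on the discrete weight: from the structure $\vec{F}_j(\vec{u}) = 2(\sum_k |u_k|^2)u_j - |u_j|^2 u_j$, the $h^1$ weight $\langle j\rangle$ falls on at most one of the three factors, so H\"older in space-time (with $\frac{1}{4/3} = \frac{2}{4} + \frac{1}{4}$) yields
\begin{equation*}
\|\vec{F}(\vec{u})\|_{L_{t,x}^{4/3} h^1(I_k)} \lesssim \|\vec{u}\|_{L_{t,x}^4 l^2(I_k)}^2 \, \|\vec{u}\|_{L_{t,x}^4 h^1(I_k)}.
\end{equation*}
Combining with the Strichartz bound and absorbing the $\eta^2$ factor, I obtain $\|\vec{u}\|_{L_{t,x}^4 h^1(I_k)} \lesssim \|\vec{u}(t_k)\|_{L_x^2 h^1}$, together with an $L_t^\infty L_x^2 h^1$ bound. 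Iterating across the $M$ subintervals gives the global estimate $\|\vec{u}\|_{L_{t,x}^4 h^1(\mathbb{R})} + \|\vec{u}\|_{L_t^\infty L_x^2 h^1(\mathbb{R})} \leq C$ depending only on $\|\vec{u}_0\|_{L_x^2 h^1}$ and the total scattering norm.

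\textbf{Step 2: Construction of the asymptotic states.} I define
\begin{equation*}
\vec{u}^+ := \vec{u}_0 + i \int_0^{\infty} e^{-is\Delta} \vec{F}(\vec{u}(s))\,\mathrm{d}s,
\end{equation*}
and analogously $\vec{u}^-$. By the dual Strichartz estimate and the nonlinear bound of Step 1,
\begin{equation*}
\left\| \int_{t_1}^{t_2} e^{-is\Delta} \vec{F}(\vec{u}(s))\,\mathrm{d}s \right\|_{L_x^2 h^1} \lesssim \|\vec{F}(\vec{u})\|_{L_{t,x}^{4/3} h^1([t_1,t_2])} \lesssim \|\vec{u}\|_{L_{t,x}^4 l^2([t_1,t_2])}^2 \|\vec{u}\|_{L_{t,x}^4 h^1([t_1,t_2])},
\end{equation*}
which is a Cauchy tail vanishing as $t_1, t_2 \to \infty$ by finiteness of the global $L_{t,x}^4 h^1$ norm. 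Hence the integral converges absolutely in $L_x^2 h^1$.

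\textbf{Step 3: Scattering.} Subtracting the Duhamel formula for $\vec{u}(t)$ from the definition of $\vec{u}^+$ and applying $e^{it\Delta}$ gives
\begin{equation*}
\vec{u}(t) - e^{it\Delta} \vec{u}^+ = - i\, e^{it\Delta} \int_t^{\infty} e^{-is\Delta} \vec{F}(\vec{u}(s))\,\mathrm{d}s,
\end{equation*}
and taking the $L_x^2 h^1$ norm together with the bound above shows this tends to zero as $t \to +\infty$. The argument for $t \to -\infty$ is identical.

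\textbf{Main obstacle.} The one place real work is required is the persistence-of-regularity step: one must justify that taking the discrete $\langle j\rangle$ weight inside $\vec{F}(\vec{u})$ costs only one $h^1$ factor while the other two remain in $l^2$. This is where the algebraic structure of $\mathcal{R}(j)$ and the product formula $2(\sum_k |u_k|^2)u_j - |u_j|^2 u_j$ matter; once the pointwise-in-$(t,x)$ inequality $\|\vec{F}(\vec{u})\|_{h^1} \lesssim \|\vec{u}\|_{l^2}^2 \|\vec{u}\|_{h^1}$ is in hand, the rest is a standard bootstrap on short intervals and a dual-Strichartz/Cauchy tail argument.
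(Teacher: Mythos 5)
Your proof is correct and is the standard argument; the paper itself states Theorem \ref{th2.4v103} without proof, referring to \cite{CGYZ,CGZ,YZ} where essentially this same scheme (partition into small $L_{t,x}^4 l^2$ intervals, persistence of $h^1$-regularity via the pointwise bound $\|\vec{F}(\vec{u})\|_{h^1}\lesssim \|\vec{u}\|_{l^2}^2\|\vec{u}\|_{h^1}$ coming from $\vec{F}_j(\vec{u}) = 2(\sum_k |u_k|^2)u_j - |u_j|^2u_j$, then a Duhamel/Cauchy-tail construction of $\vec{u}^\pm$) appears. One small presentational caveat: in Step 1 you should invoke the local theory (Proposition \ref{lea1}) or a continuity argument to justify that $\|\vec{u}\|_{L_{t,x}^4 h^1(I_k)}$ is finite a priori before absorbing the $\eta^2$ term, but this is routine and does not affect the validity of the argument.
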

\begin{theorem}[Stability theorem]
For any $\alpha = 0,1$, let $I$ be a compact interval and $\vec{e} = \{e_j\}_{j\in \mathbb{Z}}$,
\begin{align*}
e_j = i \partial_t u_j + \Delta u_j + \vec{F}_j(\vec{u}).
\end{align*}
%For $\alpha = 0,1$, assume
with
\begin{align*}
\left\|\vec{u}\right\|_{L_{t,x}^4 h^\alpha(I \times \mathbb{R}^2)} \le A,
\end{align*}
for $A> 0$. Then for any $\epsilon > 0$, there is $\delta >0$, such that if
\begin{align*}
\left\|\vec{e}\right\|_{L_{t,x}^\frac43 h^\alpha} \le \delta \text{ and }
\left\|\vec{u}(t_0) - \vec{v}_0 \right\|_{L_x^2 h^1} \le \delta,
\end{align*}
then \eqref{eq1.1} has a solution $\vec{v} \in C_t^0 L_x^2 h^1 \cap L_{t,x}^4 h^\alpha(I \times \mathbb{R}^2\times \ZZ)$
 with initial data $\vec{v}(t_0) = \vec{v}_0$.
Furthermore, %Moreover,
\begin{align*}
\left\|\vec{u}- \vec{v}\right\|_{L_{t,x}^4 h^\alpha} + \left\|\vec{u}- \vec{v}\right\|_{L_t^\infty L_x^2 h^1} \le \epsilon.
\end{align*}
\end{theorem}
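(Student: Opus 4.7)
This is the standard long-time perturbation result for NLS, so the strategy is the usual one: reduce to a contraction/bootstrap on each piece of a partition of $I$ whose pieces are small in the relevant Strichartz norm, and propagate the smallness from interval to interval. Set $\vec w := \vec v - \vec u$, so that $\vec w$ satisfies
\begin{equation*}
i\partial_t \vec w + \Delta \vec w = -\bigl[\vec F(\vec u + \vec w) - \vec F(\vec u)\bigr] - \vec e,
\qquad \vec w(t_0) = \vec v_0 - \vec u(t_0).
\end{equation*}
Using Duhamel and the Strichartz estimate from Theorem \ref{th2.1v31} with the dual pair $(4,4)/(4/3,4/3)$, we obtain, on any subinterval $J \subset I$ containing $t_0$,
\begin{equation*}
\|\vec w\|_{L^4_{t,x} h^\alpha(J)} + \|\vec w\|_{L^\infty_t L^2_x h^\alpha(J)}
\lesssim \|\vec w(t_0)\|_{L^2_x h^\alpha}
+ \bigl\|\vec F(\vec u+\vec w)-\vec F(\vec u)\bigr\|_{L^{4/3}_{t,x} h^\alpha(J)}
+ \|\vec e\|_{L^{4/3}_{t,x} h^\alpha(J)}.
\end{equation*}

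\textbf{The key multilinear estimate.} Since $\vec F$ is cubic and the resonance set $\mathcal R(j)$ enforces $j = j_1-j_2+j_3$, the standard tame bound $\langle j\rangle \le \langle j_1\rangle+\langle j_2\rangle+\langle j_3\rangle$ together with Cauchy-Schwarz in $j$ gives the trilinear inequality
\begin{equation*}
\bigl\|\vec F(\vec u+\vec w) - \vec F(\vec u)\bigr\|_{L^{4/3}_{t,x} h^\alpha(J)}
\lesssim \bigl(\|\vec u\|^2_{L^4_{t,x} h^\alpha(J)} + \|\vec w\|^2_{L^4_{t,x} h^\alpha(J)}\bigr)\|\vec w\|_{L^4_{t,x} h^\alpha(J)},
\end{equation*}
which is exactly the analogue for the resonant system of the usual cubic NLS trilinear estimate and underlies the local theory of Proposition \ref{lea1}. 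Partition $I$ into $M = M(A,\eta)$ consecutive subintervals $I_1,\dots,I_M$ on which $\|\vec u\|_{L^4_{t,x} h^\alpha(I_k)} \le \eta$ for a small absolute constant $\eta$; this is possible with $M \lesssim (A/\eta)^4$ by subadditivity of the $L^4_{t,x} h^\alpha$ norm.

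\textbf{Bootstrap and iteration.} Fix $\eta$ small enough that $C\eta^2 \le 1/2$ in the inequality above. On the first interval $I_1$ containing $t_0$, the standing assumption $\|\vec w(t_0)\|_{L^2_x h^1} \le \delta$ together with $\|\vec e\|_{L^{4/3}_{t,x} h^\alpha(I_1)} \le \delta$ and a continuity-in-time bootstrap close under the ansatz $\|\vec w\|_{L^4_{t,x} h^\alpha(I_1)} \le C_1 \delta$ for some absolute $C_1$. The inhomogeneous Strichartz bound on $\|\vec w\|_{L^\infty_t L^2_x h^1(I_1)}$ then produces an initial datum for $I_2$ of size at most $C_2 \delta$, and iterating on all $M$ subintervals yields
\begin{equation*}
\|\vec w\|_{L^4_{t,x} h^\alpha(I)} + \|\vec w\|_{L^\infty_t L^2_x h^1(I)} \le C_1 (2C_2)^M \delta.
\end{equation*}
Choosing $\delta = \delta(\epsilon,A,\eta)$ small enough to swallow the factor $(2C_2)^M$ gives the desired bound by $\epsilon$. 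Existence of $\vec v$ on all of $I$ follows from this a priori estimate by the standard continuity argument combined with the local theory.

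\textbf{Main obstacle.} The only point requiring care is the trilinear estimate in the $h^1$ norm: one must check that the resonance constraint is compatible with a Leibniz-type bound on $\langle j\rangle$, so that no loss of the $h^1$ regularity occurs when distributing $\langle j\rangle$ over the three factors. This is exactly the same observation used in Proposition \ref{lea1} and in the earlier works \cite{CGYZ,CGZ,YZ}, and it propagates unchanged to the difference estimate here. Once this is in hand, everything else is the routine Strichartz/bootstrap machinery.
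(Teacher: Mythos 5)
Your proposal is correct and is exactly the standard long-time perturbation argument that the paper itself invokes: the paper does not prove this theorem but states it as a review item and refers to \cite{CGYZ,CGZ,YZ} for the details, and your outline (Duhamel plus the $(4,4)$/$(\frac43,\frac43)$ Strichartz pair, a tame trilinear bound in $h^\alpha$ using the resonance constraint, a greedy partition of $I$ into $M\lesssim(A/\eta)^4$ pieces, bootstrap plus iteration with exponential loss absorbed into $\delta$) is the same argument found in those references.

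Two small points worth flagging, neither fatal. First, your trilinear bound is written as
$(\|\vec u\|^2_{L^4 h^\alpha}+\|\vec w\|^2_{L^4 h^\alpha})\|\vec w\|_{L^4 h^\alpha}$, but the difference $\vec F(\vec u+\vec w)-\vec F(\vec u)$ also produces monomials whose $h^\alpha$ weight lands on a $\vec u$ factor rather than on $\vec w$, e.g. a term controlled by $\|\vec w\|_{L^4 l^2}^2\|\vec u\|_{L^4 h^\alpha}$. The cleaner statement is
$\|\vec F(\vec u+\vec w)-\vec F(\vec u)\|_{L^{4/3}h^\alpha}\lesssim(\|\vec u\|_{L^4 h^\alpha}+\|\vec w\|_{L^4 h^\alpha})^2\|\vec w\|_{L^4 h^\alpha}$, which is of the same size and closes the same bootstrap, so the issue is only one of bookkeeping. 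Incidentally, the tame bound you invoke is actually cleaner here than the generic Leibniz argument: the constraint $j_1^2-j_2^2+j_3^2=j^2$ together with $j_1-j_2+j_3=j$ forces $j_1=j$ or $j_3=j$, so $\langle j\rangle$ literally equals one of $\langle j_1\rangle,\langle j_3\rangle$ and the equivalent closed form $\vec F_j(\vec u)=2(\sum_k|u_k|^2)u_j-|u_j|^2u_j$ from \eqref{added1} makes the $h^\alpha$ estimate immediate without any combinatorics. Second, for $\alpha=0$ the hypotheses give you only $\|\vec u\|_{L^4 l^2}\le A$ and $\|\vec e\|_{L^{4/3}l^2}\le\delta$, while the conclusion asserts smallness of $\|\vec w\|_{L^\infty_t L^2_x h^1}$; your bootstrap as written controls $\vec w$ only in $l^2$-based norms in that case. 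The $h^1$ persistence is a separate step relying on the fact that $\vec u$ itself lies in $C^0_t L^2_x h^1\cap L^4_{t,x}h^1$ on $I$ (which follows from Proposition \ref{lea1} since $\vec u(t_0)\in L^2_x h^1$), and on the tameness of the nonlinearity so that $\|\vec w\|_{h^1}$ appears only linearly. It is worth making this persistence-of-regularity step explicit rather than folding it into the main bootstrap.
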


\section{Variational characterization of the ground state}\label{se3v30}
In this section, we study the sharp constant $C_{res}$ of the following Gagliardo-Nireberg inequality:
\begin{align}\label{eq3.1v99}
2\int_{\mathbb{R}^2}\bigg(\sum_{j\in \mathbb{Z}_N } |u_j|^2\bigg)^2- \int_{\mathbb{R}^2}\sum_{j\in  \mathbb{Z}_N }|u_j|^4 \leq C_{res} \bigg(\int_{\mathbb{R}^2}\sum_{j\in  \mathbb{Z}_N } |u_j|^2 \bigg)\bigg(\int_{\mathbb{R}^2} \sum_{j\in \mathbb{Z}_N  } |\nabla u_j|^2 \bigg),
\end{align}
where $\vec{u}\in H_x^1\ell^2_j(\mathbb{R}^2 \times \mathbb{Z} )$ when $N = \infty$ and $\vec{u} \in \left( H^1_x \right)^N $ when $N \in \mathbb{N}$.

When $N= 1$, we have the following know result.
\begin{lemma}[Sharp Gagliardo-Nirenberg inequality for the scalar function]\label{le3.1v86}
\begin{align*}
\|u\|_{L_x^4}^4 \le \frac2{\|Q\|_{L_x^2}^2} \|u\|_{L_x^2}^2 \|\nabla u \|_{L_x^2}^2,
\end{align*}
and $Q$ is the ground state of $\Delta Q - Q = - Q^3$, the equality holds if and only if $u(x) = c \tilde{Q}(x)$, where $c$ is a constant and $\tilde{Q}$ is
$Q$ under the action of the group of translation and dilation.

\end{lemma}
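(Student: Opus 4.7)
The plan is to follow Weinstein's variational approach and reduce the sharp constant to the ground state $Q$. Consider the Weinstein functional
\begin{equation*}
J(u) = \frac{\|u\|_{L_x^2}^2 \|\nabla u\|_{L_x^2}^2}{\|u\|_{L_x^4}^4}, \quad u \in H^1(\mathbb{R}^2) \setminus \{0\},
\end{equation*}
which is invariant under the scaling $u(x) \mapsto \lambda u(\mu x)$. I would show that the infimum $J_{\min}$ is attained and identify the minimizer with $Q$; the inequality then follows with sharp constant $C = 1/J_{\min}$.

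For existence of a minimizer, take a minimizing sequence $\{u_n\}$ and replace each term by its Schwarz symmetrization $u_n^*$. Rearrangement preserves $\|u\|_{L^2}$ and $\|u\|_{L^4}$ while not increasing $\|\nabla u\|_{L^2}$, so without loss of generality $u_n$ is radial and nonincreasing. Using the two-parameter scaling invariance of $J$, I would normalize $\|u_n\|_{L^2} = \|\nabla u_n\|_{L^2} = 1$, making $\{u_n\}$ bounded in $H^1_{\mathrm{rad}}(\mathbb{R}^2)$. Since radial $H^1$ functions obey pointwise decay at infinity (the Strauss lemma), one extracts a subsequence converging strongly in $L^4$ on the whole plane, producing a nontrivial minimizer $u_*$.

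Next, I would derive the Euler--Lagrange equation. Variation of $J$ at $u_*$ gives, after rescaling, $\Delta u_* - u_* + u_*^3 = 0$ with $u_* > 0$. By Kwong's uniqueness theorem, $u_* = Q$ up to translation, scaling, and phase. To extract the constant, I use two Pohozaev-type identities: multiplying $\Delta Q - Q + Q^3 = 0$ by $Q$ yields
\begin{equation*}
\|\nabla Q\|_{L^2}^2 + \|Q\|_{L^2}^2 = \|Q\|_{L^4}^4,
\end{equation*}
while multiplying by $x \cdot \nabla Q$ and integrating in dimension two (where the kinetic contribution vanishes since $\int (\Delta Q)(x\cdot \nabla Q)\,dx = \tfrac{2-d}{2}\|\nabla Q\|_{L^2}^2 = 0$) gives
\begin{equation*}
\|Q\|_{L^4}^4 = 2 \|Q\|_{L^2}^2.
\end{equation*}
Combining, $\|\nabla Q\|_{L^2}^2 = \|Q\|_{L^2}^2 = \tfrac12\|Q\|_{L^4}^4$, so $J(Q) = \|Q\|_{L^2}^2/2$ and therefore $C_{\mathrm{sharp}} = 2/\|Q\|_{L^2}^2$.

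The main obstacle is the compactness step: a minimizing sequence could a priori concentrate, split, or vanish. Schwarz rearrangement together with the compact radial embedding into $L^4$ defeats all three scenarios in a clean way here, which is why the 2D scalar case is so tractable. For the equality case, note that equality in Schwarz symmetrization for $\|\nabla u\|_{L^2}$ forces $u$ (up to translation) to be radial and nonincreasing; combined with uniqueness of $Q$, this gives that equality holds if and only if $u(x) = c\, Q(\lambda(x - x_0))$ for some $c \in \mathbb{C}$, $\lambda > 0$, $x_0 \in \mathbb{R}^2$.
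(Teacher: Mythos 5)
Your proposal is a correct reconstruction of Weinstein's original variational argument \cite{W}. The paper itself does not prove Lemma 3.1; it cites it as a classical result, so there is no "paper's own proof" to compare against, but the route you take — Schwarz symmetrization plus the compact radial embedding $H^1_{\mathrm{rad}}(\mathbb{R}^2)\hookrightarrow L^4(\mathbb{R}^2)$ to produce a minimizer, derivation of the Euler--Lagrange equation, Kwong's uniqueness, and the two Pohozaev identities to evaluate the constant — is exactly the standard path and the arithmetic ($\|\nabla Q\|_{L^2}^2=\|Q\|_{L^2}^2=\tfrac12\|Q\|_{L^4}^4$, hence $C_{\mathrm{sharp}}=2/\|Q\|_{L^2}^2$) is correct.

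Two points you should tighten if you write this out in full. First, before symmetrizing you should pass from a complex-valued $u_n$ to $|u_n|$, using $\|\nabla|u_n|\|_{L^2}\le\|\nabla u_n\|_{L^2}$ with equality iff $u_n$ has (locally) constant phase; this is also where the multiplicative constant $c\in\mathbb{C}$ in the equality case comes from. Second, after extracting the $L^4$-strong limit $u_*$, you only get $\|u_*\|_{L^2}\le 1$ and $\|\nabla u_*\|_{L^2}\le 1$ by weak lower semicontinuity, while $\|u_*\|_{L^4}^4 = \lim\|u_n\|_{L^4}^4>0$; you should note that this forces $J(u_*)\le J_{\min}$ and hence, since $J_{\min}$ is the infimum, all the inequalities are saturated. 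Neither is a gap in the idea, just bookkeeping that belongs in a complete write-up.
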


When $1 < N < \infty$, we denote $\vec{u} = (u_0,u_1, \cdots, u_{N-1})$. It is obviously that
$\Big( \overbrace{\sqrt{\frac1{2N-1}} Q, \cdots, \sqrt{\frac1{2N-1}} Q }^{N-\text{ copy}} \Big)$  is a solution of the corresponding elliptic equation system
\begin{equation}\label{eq1.4v9}
 \Delta Q_j - Q_j = -    | Q_j|^2 Q_j - 2  \sum\limits_{k \ne j}   |Q_k|^2 Q_j, \text{ for } j = 0, 1, \cdots, N-1,
\end{equation}
where $Q$ is the ground state solution of
$\Delta_{\mathbb{R}^2} Q - Q = - Q^3$. When $N=2$, B. Sirakov \cite{Si} conjectured that under spatial translation and rotation that $\left(\sqrt{\frac13} Q, \sqrt{\frac13} Q \right)$ is the unique positive solution to \eqref{eq1.4v9}. This conjecture was proved by J. Wei and W. Yao \cite{WY} (see also \cite{I}).
These arguments can be extended to the general finite $N > 1$ case, and thus under spatial translation and rotation that
$\Big( \overbrace{\sqrt{\frac1{2N-1}} Q, \cdots, \sqrt{\frac1{2N-1}} Q }^{N-\text{ copy}}  \Big)$ is the unique positive solution to \eqref{eq1.4v9}.
N. V. Nguyen, R. Tian, B. Deconinck, and N. Sheils \cite{NTDS} showed the ground state is connected with the best constant for the vector valued sharp Gagliardo-Nirenberg inequality for $\vec{f} = (f_0,f_1, \cdots, f_{N-1} )$,
\begin{align}\label{eq1.2v9}
\mathcal{N}(\vec{f})  \le C_{op} \left\|\nabla_x \vec{f} \right\|_{L_x^2(\mathbb{R}^2)}^2 \left\|\vec{f}\right\|_{L_x^2(\mathbb{R}^2) }^2,
\end{align}
where
\begin{align*}
& \left\|\nabla_x \vec{f}\right\|_{L_x^2} = \left( \sum\limits_{j=0}^{N-1} \int_{\mathbb{R}^2} |\nabla f_j( x)   |^2  \,\mathrm{d}x\right)^\frac12, \ 
\left\|\vec{f}\right\|_{L_x^2} = \left( \sum\limits_{j=0}^{N-1}  \int_{\mathbb{R}^2} |f_j(x)  |^2   \,\mathrm{d}x\right)^\frac12, \\
\text{ and } &
\mathcal{N}(\vec{f}) = \sum\limits_{j=0}^{N-1}    \int_{\mathbb{R}^2}  \left( \|f_j(x) \|_{L^4}^4 + 2
 \sum\limits_{ 0 \le k \le N-1, \atop k\ne j } |f_k(x)|^2 |f_j(x)|^2 \right)  \,\mathrm{d}x.
\end{align*}

We will now study the best constant of the Gagliardo-Nirenberg inequality \eqref{eq3.1v99} for $N \in \mathbb{N} \cup \{ \infty \}$. Let $J\subseteq \mathbb{Z}$ be the index set whose size can be finite or (countably) infinite, for any nontrivial $\vec{u} =\{u_j\}_{j\in \mathbb{Z}_N } \subseteq  H_x^1\ell^2(\mathbb{R}^2\times J)$, we define the Weinstein functional
	\[W(\vec{u}) :=	\frac{2\int_{\mathbb{R}^2}\big(\sum\limits_{j\in J} |u_j|^2\big)^2- \int_{\mathbb{R}^2}\sum\limits_{j\in J}|u_j|^4}
	{\big(\int_{\mathbb{R}^2}\sum\limits_{j\in J} |u_j|^2 \big)\big(\int_{\mathbb{R}^2} \sum\limits_{j\in J} |\nabla u_j|^2 \big)},\]
where the size of $J$ can be finite or countably infinite, i.e., $|J|\in\{1,2,\cdots\}\cup\{\infty\}$.
To emphasize the role of the size of $|J|$, we denote $W_{|J|}(\vec{u})=W(\vec{u})$, and
	\[C_{|J|}=\sup\left\{W_{|J|}(\vec{u}) : \vec{u} =    \left\{u_j \right\}_{j\in J} \in   H_x^1\ell^2_j(\mathbb{R}^2\times J),\ \vec{u}\neq \vec0 \right\}.\]
Note that the Weinstein functional $W \left(\vec{u} \right)$ is invariant under homogeneity and scaling symmetry, that is, $W(\vec{u})=W(\vec{u}^{\lambda,\mu})$ where $u^{\lambda,\mu}_j (\,\cdot\,):=\mu u_j \left(\lambda\,\cdot\, \right)$ for any $\mu$, $\lambda>0$ and $j\in J$.
By standard variational argument, a maximizer $\vec{Q}= \left\{Q_j \right\}_{j\in J}$ of the Weinstein functional $W$ weakly solves the system of Euler-Lagrange equations
	\begin{equation}\label{eq:EL}
	\Delta_{\mathbb{R}^2} Q_j - Q_j +  2 \bigg(\sum_{j\in J} |Q_j|^2\bigg) Q_j - |Q_j|^2 Q_j =0,\quad j\in J,
	\end{equation}
if it exists.

\begin{remark}[Properties of maximizer]\label{rem:max}
If there exists a maximizer $\vec Q=\{Q_j\}_{j\in J}$, then it satisfies the following properties:
\begin{enumerate}
\item By switching every $u_j$ by $|u_j|$, we may assume that the maximizer $\vec Q$ is non-negative.
Then by standard argument by maximum principle, each component $Q_j$ of such non-negative maximizer is indeed strictly positive.

\item For $j\in J$, denote $Q_j^*$ as a symmetric decreasing rearrangement (Schwarz symmetrization) \cite{Lieb-Loss} of strictly positive function $Q_j \in H^1(\mathbb{R}^2)$.
Let $\vec Q^*=\{Q_j^*\}_{j\in J}$.
Then, it is well-known that
	\[\int_{\mathbb{R}^2} Q_i^2Q_j^2\:dx\leq \int_{\mathbb{R}^2} (Q_i^*)^2(Q_j^*)^2\:dx,\quad i,\ j\in J.\]
Especially, by summing over $i$, $j\in J$, it follows that
	\[ 2\int_{\mathbb{R}^2}\Big(\sum_{j\in J} Q_j^2\Big)^2- \int_{\mathbb{R}^2}\sum_{j\in J}Q_j^4 \leq 2\int_{\mathbb{R}^2}\Big(\sum_{j\in J} (Q_j^*)^2\Big)^2- \int_{\mathbb{R}^2}\sum_{j\in J}(Q_j^*)^4.\]
Now, we can take every $Q_j$ to be radial, i.e., $Q_j(x)=Q_j(|x|)$, since we get $W(\vec Q)\leq W(\vec Q^*)$ from Polya-Szeg\"{o} inequality.

\item From the simple observation (as in \cite{WY} for instance), by multiplying $i$-th equation of \eqref{eq:EL} by $Q_j$ and vice versa, we have
	\[\int_{\mathbb{R}^2}\nabla Q_i\cdot \nabla Q_j + \int_{\mathbb{R}^2} Q_i Q_j - \int_{\mathbb{R}^2} \Big(\sum_{j\in J}|Q_j|^2\Big)Q_iQ_j + \int_{\mathbb{R}^2}|Q_j|^2Q_iQ_j =0,\]
	\[\int_{\mathbb{R}^2}\nabla Q_i\cdot \nabla Q_j + \int_{\mathbb{R}^2} Q_i Q_j - \int_{\mathbb{R}^2} \Big(\sum_{j\in J}|Q_j|^2\Big)Q_iQ_j + \int_{\mathbb{R}^2}|Q_i|^2Q_iQ_j =0.\]
In particular, these implies that
	\[\int_{\mathbb{R}^2} (Q_i^2-Q_j^2)Q_iQ_j =\int_{\mathbb{R}^2} (Q_i-Q_j)(Q_i+Q_j)Q_iQ_j =0.\]
By strict positivity of each $Q_i$, we have $Q_i=Q_j$.
\end{enumerate}
\end{remark}
When $|J|<\infty$, simply following the compactness argument of Weinstein \cite{W}, the maximizer $\vec{Q} =\{Q_j\}_{j\in J}$ exists.
From the third observation in the Remark \ref{rem:max}, we can let $Q_j=\phi$, and the Euler-Lagrange equation \eqref{eq:EL} will be decoupled and written as single equation $\Delta_{\mathbb{R}^2} \phi - \phi + |\phi|^2 \phi =0$.
Now, we have
\begin{equation}\label{eq:QoptimalC}
C_{|J|}=W_{|J|}(\vec Q)=\frac{2|J|-1}{|J|}\cdot\frac{\|Q\|_{L^4}^4}{\|\nabla Q\|_{L^2}^2\|Q\|_{L^2}^2}=\frac{2 \left(2|J|-1 \right)}{|J|}\cdot\frac{1}{\|Q\|_{L^2}^2},
\end{equation}
where $Q$ is the ground state solution of
\begin{equation}\label{eq:GS}
\Delta_{\mathbb{R}^2}Q-Q+Q^3=0.
\end{equation}
Therefore, for $N < \infty$, we have
\begin{theorem}[Sharp Gagliardo-Nirenberg inequality for the vector function]\label{th3.4v99}
$\forall\, \vec{u} \in \left(  H^1_x \right)^N   $, we have
\begin{align*}
\mathcal{N}(\vec{u}) \le \frac{2(2N-1)}N {\|Q\|_{L^2}^{- 2}}  \|\nabla_x \vec{u} \|_{L_x^2l^2}^2 \|\vec{u}\|_{L_x^2 l^2}^2,
\end{align*}
the equality holds at the positive radial function $\vec{Q} = \Big( \overbrace{\sqrt{\frac1{2N-1}} Q, \cdots, \sqrt{\frac1{2N-1}} Q }^{N-\text{ copy } } \Big)
 \in \left( H^1 \right)^N    \cap \left( C^\infty \right)^N $, where $Q$ is the ground state of the elliptic equation
\begin{align*}
\Delta_{\mathbb{R}^2 } Q - Q = - Q^3.
\end{align*}

\end{theorem}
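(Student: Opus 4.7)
The plan is to identify the sharp constant in \eqref{eq3.1v99} with the supremum of the Weinstein functional $W_N$ over nontrivial $(H^1_x)^N$, extract a maximizer $\vec{Q}$, and then show via the Euler--Lagrange system \eqref{eq:EL} that $\vec{Q}$ must have all components equal to a rescaled scalar ground state. First I would establish the existence of a maximizer. Starting from a maximizing sequence $\{\vec{u}^{(n)}\}$, the scale-homogeneity invariance $W_N(\vec{u}) = W_N(\vec{u}^{\lambda,\mu})$ permits the normalization $\|\vec{u}^{(n)}\|_{L^2_x l^2} = \|\nabla \vec{u}^{(n)}\|_{L^2_x l^2} = 1$. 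A componentwise adaptation of Weinstein's original compactness argument (using the refined Sobolev embedding together with symmetric-decreasing rearrangement to absorb translations) yields a nontrivial weak limit $\vec{Q} \in (H^1_x)^N$. Weak lower semicontinuity of the quadratic forms in the denominator and the weak convergence of the quartic numerator along the extracted profile give $W_N(\vec{Q}) = C_N$, so $\vec{Q}$ is a maximizer and hence satisfies the Euler--Lagrange system \eqref{eq:EL}.

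Next I would invoke the three structural reductions collected in Remark \ref{rem:max}: replacing each $u_j$ by $|u_j|$ gives $\vec{Q} \ge 0$; the maximum principle applied to \eqref{eq:EL} upgrades this to $Q_j > 0$ pointwise; and Schwarz symmetrization combined with the Riesz rearrangement inequality for the cross terms $\int Q_i^2 Q_j^2$ together with P\'olya--Szeg\H{o} allows each $Q_j$ to be taken radial and decreasing. The rigidity step then proceeds by pairing the equations: testing the $i$-th equation of \eqref{eq:EL} against $Q_j$ and the $j$-th against $Q_i$ and subtracting yields
\[
\int_{\mathbb{R}^2} (Q_i^2 - Q_j^2)\, Q_i Q_j \, \mathrm{d}x = 0,
\]
which by strict positivity forces $Q_i \equiv Q_j$ for all $i, j \in \mathbb{Z}_N$. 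Setting every $Q_j = \phi$ collapses \eqref{eq:EL} into the single scalar equation $\Delta_{\mathbb{R}^2} \phi - \phi + (2N-1)\phi^3 = 0$, whose unique (up to translation) positive radial solution is $\phi = (2N-1)^{-1/2} Q$ with $Q$ the ground state of $\Delta_{\mathbb{R}^2} Q - Q + Q^3 = 0$. Evaluating $W_N$ at $\vec{Q} = ((2N-1)^{-1/2} Q, \ldots, (2N-1)^{-1/2} Q)$ and simplifying via the scalar Pohozaev/Nehari identities $\|Q\|_{L^4}^4 = 2\|Q\|_{L^2}^2 = 2\|\nabla Q\|_{L^2}^2$ produces the stated constant $C_N = 2(2N-1)/(N\|Q\|_{L^2}^2)$, matching \eqref{eq:QoptimalC}.

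The main obstacle I expect is the compactness step: one must rule out vanishing and dichotomy for the vector-valued maximizing sequence, where the quartic numerator couples the components through the cross terms $\int(\sum_j |u_j^{(n)}|^2)^2$. Once strict positivity of each component of the limiting profile has been established, both the rearrangement reduction and the algebraic rigidity $Q_i = Q_j$ are essentially automatic, so the analytic heart of the argument lies in Step 1; the remaining steps are bookkeeping around the scalar Gagliardo--Nirenberg inequality of Lemma \ref{le3.1v86}.
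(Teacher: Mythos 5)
Your proposal follows essentially the same route as the paper: extract a maximizer of the Weinstein functional via Weinstein's compactness argument, apply the structural reductions of Remark~\ref{rem:max} (nonnegativity, strict positivity by the maximum principle, radiality by Schwarz symmetrization, and the pairing identity forcing $Q_i \equiv Q_j$), collapse the Euler--Lagrange system \eqref{eq:EL} to the scalar equation $\Delta\phi-\phi+(2N-1)\phi^3=0$, rescale to $\phi=(2N-1)^{-1/2}Q$, and evaluate $W_N$ to obtain \eqref{eq:QoptimalC}. The only cosmetic difference is that you compute the constant from the Pohozaev/Nehari identities $\|Q\|_{L^4}^4=2\|Q\|_{L^2}^2=2\|\nabla Q\|_{L^2}^2$, whereas the paper directly plugs in the scalar sharp Gagliardo--Nirenberg constant of Lemma~\ref{le3.1v86}; both yield $C_N=2(2N-1)/(N\|Q\|_{L^2}^2)$.
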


We now turn to the infinite case when $J = \mathbb{Z}$.
In this case, we can construct a maximizing sequences that converges to $C_\infty$ from the case when $N$ is finite using simple compactness argument.
To see this, we first fix $0<\epsilon\ll 1$ and choose $\vec{\tilde{Q}}= \left\{\tilde{Q}_j \right\}_{j\in\mathbb{Z}}$ such that $C_\infty -  W_\infty \left(\vec{\tilde{Q}} \right)<\frac{\epsilon}{2}$.
In particular, from homogeneity and scaling invariance of $W$, we may assume that
	\[\int_{\mathbb{R}^2}\sum_{j\in\mathbb{Z}} \left|\tilde{Q}_j \right|^2=\int_{\mathbb{R}^2}\sum_{j\in\mathbb{Z}} \left|\nabla \tilde{Q}_j \right|^2=1.\]
Also, we can choose $N=N(\epsilon)>0$ so that
	\[ 2\int_{\mathbb{R}^2}\bigg(\sum_{j=-N}^N  \left|\tilde{Q}_j \right|^2\bigg)^2- \int_{\mathbb{R}^2}\sum_{j=-N}^N \left|\tilde{Q}_j \right|^4 >
 \bigg\{ 2\int_{\mathbb{R}^2}\bigg(\sum_{j\in\mathbb{Z}}  \left|\tilde{Q}_j \right|^2\bigg)^2- \int_{\mathbb{R}^2}\sum_{j\in\mathbb{Z}}  \left|\tilde{Q}_j \right|^4 \bigg\} -\frac{\epsilon}{2} \]
Let $\vec{\tilde{Q}}_N  =  \left\{\tilde{Q}_{j,N} \right\}_{|j|\leq N}$ where $\tilde{Q}_{j,n} :=  \tilde{Q}_j$ for $|j|\leq N$.
Then
	\[ C_\infty-\epsilon< W_\infty \left(\vec{\tilde{Q}} \right)<  W_{2N+1} \left(\vec{\tilde{Q}}_N \right) \leq C_{2N+1}\leq C_\infty.\]
So we can conclude that $C_{2N+1} \nearrow C_ \infty <\infty$ as $N\to\infty$. (Indeed, one can easily see that $C_{N} \nearrow C_ \infty$ as $N\to\infty$.
Finally, from \eqref{eq:QoptimalC}, we have
	\[C_\infty =\frac{4}{\|Q\|_{L^2}^2},\]
and the sharp Gagliardo-Nireberg interpolation inequality (without equality sign) can be written precisely as
	\[2\int_{\mathbb{R}^2}\bigg(\sum_{j\in  \mathbb{Z} } |u_j|^2\bigg)^2- \int_{\mathbb{R}^2}\sum_{j\in  \mathbb{Z} }|u_j|^4 < \frac{4}{\|Q\|_{L^2}^2} \bigg(\int_{\mathbb{R}^2}\sum_{j\in  \mathbb{Z} } |u_j|^2 \bigg)\bigg(\int_{\mathbb{R}^2} \sum_{j\in  \mathbb{Z} } |\nabla u_j|^2 \bigg).\]

Therefore, we obtain
\begin{theorem}[Sharp Gagliardo-Nirenberg inequality for the infinite vector function]\label{th3.8v32}
$\forall\, \vec{u} \in H^1_x l^2$, we have
\begin{align*}
\mathcal{N}(\vec{u}) \le \frac4{\|Q\|_{L^2}^2} \|\nabla_x \vec{u} \|_{L_x^2l^2}^2 \|\vec{u}\|_{L_x^2 l^2}^2,
\end{align*}
the constant is optimal in the sense: $\exists \, \vec{u}^k \in H_x^1 l^2$, s.t.
\begin{align*}
\frac{ \mathcal{N}(\vec{u}^k) }{ \|\nabla_x \vec{u}^k \|_{L_x^2 l^2}^2 \|\vec{u}^k \|_{L_x^2 l^2}} \to \frac4{\|Q\|_{L_x^2}^2}, \text{ as } k \to \infty.
\end{align*}
\end{theorem}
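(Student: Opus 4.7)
The plan is to obtain Theorem \ref{th3.8v32} as a limit of the finite-dimensional inequality in Theorem \ref{th3.4v99}, and to demonstrate sharpness through an explicit near-extremal sequence. This mirrors the $C_{2N+1}\nearrow C_\infty$ reasoning already sketched in the text preceding the statement.

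For the inequality, given $\vec u=\{u_j\}_{j\in\mathbb{Z}}\in H_x^1 l^2$, I define the truncation $\vec u^{(N)}$ by $u_j^{(N)}:=u_j$ for $|j|\le N$ and $u_j^{(N)}:=0$ otherwise. Viewing $\vec u^{(N)}$ as an element of $(H_x^1)^{2N+1}$, Theorem \ref{th3.4v99} yields
\begin{equation*}
\mathcal{N}(\vec u^{(N)}) \le \frac{2(2(2N+1)-1)}{2N+1}\cdot\frac{1}{\|Q\|_{L^2}^2}\,\|\nabla_x \vec u^{(N)}\|_{L_x^2 l^2}^2\, \|\vec u^{(N)}\|_{L_x^2 l^2}^2.
\end{equation*}
To pass to the limit I use monotone convergence for $\|\vec u^{(N)}\|_{L_x^2 l^2}^2\nearrow\|\vec u\|_{L_x^2 l^2}^2$ and $\|\nabla_x \vec u^{(N)}\|_{L_x^2 l^2}^2\nearrow\|\nabla_x \vec u\|_{L_x^2 l^2}^2$. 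For the nonlinear term I expand $\mathcal{N}(\vec u^{(N)})=2\int(\sum_j|u_j^{(N)}|^2)^2 - \int\sum_j|u_j^{(N)}|^4$ via \eqref{eq1.5v83}; both integrands converge pointwise to the corresponding ones for $\vec u$ and are dominated by $3(\sum_j|u_j|^2)^2$, which belongs to $L^1_x$ thanks to \eqref{eq5.27v32}. Dominated convergence then gives $\mathcal{N}(\vec u^{(N)})\to\mathcal{N}(\vec u)$, and since $\frac{2(4N+1)}{2N+1}\to 4$, the claimed inequality follows.

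For the optimality, I build the near-extremizing sequence directly from the finite equality cases. Set
\begin{equation*}
u^k_j:=\sqrt{\tfrac{1}{2k-1}}\,Q\quad\text{for }0\le j\le k-1,\qquad u^k_j:=0\quad\text{otherwise},
\end{equation*}
which lies in $H_x^1 l^2$ since only finitely many components are nonzero. By construction this is precisely the equality configuration for the $N=k$ case of Theorem \ref{th3.4v99}, viewed inside $H_x^1 l^2$. A direct computation (using $\Delta Q-Q+Q^3=0$ together with the relation $\|\nabla Q\|_{L^2}^2=\|Q\|_{L^4}^4/2=\|Q\|_{L^2}^2/2$ used in \eqref{eq:QoptimalC}) gives
\begin{equation*}
\frac{\mathcal{N}(\vec u^k)}{\|\nabla_x\vec u^k\|_{L^2_x l^2}^2\,\|\vec u^k\|_{L^2_x l^2}^2}=\frac{2(2k-1)}{k}\cdot\frac{1}{\|Q\|_{L^2}^2}\longrightarrow\frac{4}{\|Q\|_{L^2}^2},
\end{equation*}
as $k\to\infty$, proving sharpness.

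The only delicate point is the dominated-convergence justification for the nonlinear term, which is genuinely needed because the $\ell^2$-summed series $\sum_j|u_j|^2$ involves infinitely many indices; this is disposed of in one line via \eqref{eq5.27v32}. As a conceptual remark (not needed for the proof but explaining why the supremum is never attained), if a maximizer existed in $H_x^1 l^2$ then Remark \ref{rem:max}(3) would force all components equal and strictly positive, which is incompatible with the convergence of $\sum_j\|u_j\|_{L^2}^2$; this is why the constant $4/\|Q\|_{L^2}^2$ can only be approached along the sequence $\vec u^k$.
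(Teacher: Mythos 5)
Your proof is correct and follows essentially the same route as the paper: deduce the infinite-component inequality from the finite-dimensional case by truncation, and identify the sharp constant using the explicit formula $C_{|J|}=\frac{2(2|J|-1)}{|J|}\cdot\frac{1}{\|Q\|_{L^2}^2}$ from \eqref{eq:QoptimalC}. Your presentation is a bit more concrete than the paper's (which truncates a near-maximizer of $W_\infty$ and squeezes, rather than truncating an arbitrary $\vec u$ and supplying an explicit extremizing sequence $\vec u^k$), but the underlying mechanism is the same. One small slip: the Pohozaev relations for $\Delta Q-Q+Q^3=0$ in $\mathbb{R}^2$ give $\|Q\|_{L^4}^4=2\|Q\|_{L^2}^2$ and $\|\nabla Q\|_{L^2}^2=\tfrac12\|Q\|_{L^4}^4=\|Q\|_{L^2}^2$, not $\|Q\|_{L^2}^2/2$ as you wrote; this typo does not propagate, and your final ratio $\frac{2(2k-1)}{k}\cdot\frac{1}{\|Q\|_{L^2}^2}$ is correct.
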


As a direct consequence of Theorem \ref{th3.8v32}, whenever $\left\|\vec{\varphi} \right\|_{L_x^2 l^2}^2 < \frac12 \left\|Q\right\|_{L_x^2}^2$, we have
\begin{align}\label{eq3.0v96}
E(\vec{\varphi}) \ge \frac12 \left( 1 - \frac{\left\|\vec{\varphi} \right\|_{L_x^2 l^2}^2}{\frac12 \left\|Q\right\|_{L_x^2}^2}  \right) \left\|\nabla_x \vec{\varphi} \right\|_{L_x^2 l^2}^2
\gtrsim \left\|\nabla_x \vec{\varphi}\right\|_{L_x^2 l^2}^2.
\end{align}
On the other hand, by Theorem \ref{th3.8v32}, we have
\begin{align*}
E(\vec{\varphi}) \lesssim \left\|\nabla_x \vec{\varphi} \right\|_{L_x^2 l^2}^2 + \left\|\vec{\varphi} \right\|_{L_x^2 l^2}^2 \left\|\nabla_x \vec{\varphi} \right\|_{L_x^2 l^2}^2.
\end{align*}

\section{Reduction to the almost periodic solution}\label{se4v30}

In this section, we show the non-scattering is equivalent to the existence of almost periodic solution. This reduction is standard, we refer to \cite{Ch,Killip-Visan1,TVZ1,TVZ2} and also \cite{CGZ,YZ} for the argument of the resonant nonlinear Schr\"odinger system in the defocusing case. 

By Theorem \ref{th2.4v103}, to prove \eqref{eq1.1} is globally well-poesed and scatters in $L_x^2 h^1$ for $\vec{u}_0 \in L_x^2 h^1$ satisfying $\left\|\vec{u}_0\right\|_{L_x^2 l^2}^2 < \frac12 \|Q\|_{L_x^2}^2$, it suffices to prove that if $\vec{u}$ is a solution of \eqref{eq1.1}, then
\begin{align*}
\|\vec{u} \|_{L_{t,x}^4 l^2} < \infty.
\end{align*}
Define
\begin{align*}
\Lambda(m) = \sup\left\{ \left\|\vec{u} \right\|_{L_{t,x}^4 l^2( I \times \mathbb{R}^2 \times \mathbb{Z})} : \left\|\vec{u}(0) \right\|_{L_x^2 l^2(\RR^2 \times \ZZ)} \le m, \text{ and } \vec{u}(0) \in L_x^2 h^1(\RR^2 \times \ZZ)
\right\}
\end{align*}
where $I$ is the maximal lifespan interval. Let
\begin{align*}
m_0 = \sup \left\{ m: \Lambda(m') < \infty, \forall\, m' < m\right\}.
\end{align*}
If we can prove $m_0 = \frac1{\sqrt2} \|Q\|_{L_x^2}$, then the global well-posedness and scattering are established.
Assume $m_0 < \frac1{\sqrt2} \|Q\|_{L_x^2}$, by following the standard concentration-compactness/rigidity arguments in \cite{TVZ1,TVZ2,Killip-Visan1}, we obtain
\begin{theorem}[Existence of the minimal blowup solution]\label{th4.9v51}
Suppose $m_0 < \frac1{\sqrt2} \|Q\|_{L_x^2} $, there exists a solution
 $\vec{u}\in C_t^0 L_x^2 h^1(I \times \mathbb{R}^2 \times \mathbb{Z}) \cap L_{t,x}^4 l^2 (I \times \mathbb{R}^2 \times \mathbb{Z})$
 of \eqref{eq1.1} with $ \left\| \vec{u} \right\|_{L_x^2 l^2}= m_0$, which is almost periodic in the sense that there exists $(x(t), \xi(t), N(t)) \in \mathbb{R}^2 \times \mathbb{R}^2  \times \mathbb{R}^+$ such that for any $\eta > 0$, there exists $C(\eta) > 0$ satisfying for any $t\in I$,
\begin{align}\label{eq2.10v20}
\int_{|x-x(t)|\ge \frac{C(\eta)}{N(t)}} \left\|\vec{u}(t,x)\right\|_{h^1}^2 \,\mathrm{d}x + \int_{|\xi- \xi(t)|\ge C(\eta) N(t)} \left\|\hat{\vec{u}}(t,\xi)\right\|_{h^1}^2 \,\mathrm{d}\xi < \eta.
 \end{align}
Here $I$ is the maximal lifespan interval. Moreover, we can take $N(0) = 1$, $x(0) = \xi(0) = 0$, $N(t) \le 1$ on $I$, and
\begin{align}\label{eq2.17v20}
|N'(t) | +  |\xi'(t)| \lesssim N(t)^3.
\end{align}
\end{theorem}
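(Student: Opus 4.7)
The plan is to follow the concentration-compactness/rigidity scheme of Kenig-Merle, as adapted to the mass-critical setting by Tao-Visan-Zhang and Killip-Visan-Tao-Zhang and carried out for defocusing resonant systems in \cite{CGZ,YZ}. The key input is a linear profile decomposition for $e^{it\Delta_{\mathbb{R}^2}}$ on $L_x^2 h^1(\mathbb{R}^2\times\mathbb{Z})$ compatible with the small-data scattering theory (Proposition \ref{lea1}) and the stability theorem of Section \ref{se2}. The minimality assumption $m_0 < \frac{1}{\sqrt{2}}\|Q\|_{L^2}$ is used through the sharp Gagliardo-Nirenberg bound of Theorem \ref{th3.8v32} together with \eqref{eq3.0v96} to ensure that the subthreshold energy stays coercive on all profiles.

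First, by definition of $m_0$, I choose a sequence of initial data $\vec{u}_{0,n}\in L_x^2 h^1$ whose solutions $\vec{u}_n$ of \eqref{eq1.1} satisfy $\|\vec{u}_{0,n}\|_{L_x^2 l^2}\searrow m_0$ and $\|\vec{u}_n\|_{L_{t,x}^4 l^2(I_n)}\to\infty$ on the maximal lifespans. A time translation allows me to assume $\|\vec{u}_n\|_{L_{t,x}^4 l^2([0,\sup I_n))}$ and $\|\vec{u}_n\|_{L_{t,x}^4 l^2((\inf I_n,0])}$ both diverge. Applying a Keraani-type linear profile decomposition componentwise in $j\in\mathbb{Z}$ with the $h^1$ weight, I write
\begin{equation*}
\vec{u}_{0,n} = \sum_{j=1}^{J} g_n^j e^{it_n^j \Delta_{\mathbb{R}^2}} \vec{\phi}^j + \vec{w}_n^J,
\end{equation*}
where $g_n^j$ encodes the $\mathbb{R}^2$-symmetries (translation by $x_n^j$, Galilean boost by $\xi_n^j$, and scaling by $\lambda_n^j$), the parameter frames are asymptotically orthogonal, the mass-Pythagorean identity
\begin{equation*}
\|\vec{u}_{0,n}\|_{L_x^2 l^2}^2 = \sum_{j=1}^{J} \|\vec{\phi}^j\|_{L_x^2 l^2}^2 + \|\vec{w}_n^J\|_{L_x^2 l^2}^2 + o_n(1)
\end{equation*}
holds, and $\limsup_{n\to\infty}\|e^{it\Delta}\vec{w}_n^J\|_{L_{t,x}^4 l^2}\to 0$ as $J\to\infty$. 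Since the $h^1$-norm is inert under the $\mathbb{R}^2$ group action, an analogous Pythagorean decomposition holds for the $L_x^2 h^1$ norm, so each $\vec{\phi}^j$ and each $\vec{w}_n^J$ lies in $L_x^2 h^1$.

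Next, I associate to each linear profile a nonlinear profile $\vec{v}^j$ solving \eqref{eq1.1}: if $t_n^j\equiv 0$, I solve near $0$; if $t_n^j\to\pm\infty$, I construct wave operators using the small-data theory. By the Pythagorean identity $\|\vec{\phi}^j\|_{L_x^2 l^2}\le m_0<\frac{1}{\sqrt{2}}\|Q\|_{L^2}$ for every $j$, and if every such inequality were strict I could apply the inductive hypothesis (definition of $m_0$) to conclude each $\vec{v}^j$ has finite $L_{t,x}^4 l^2$ norm. Gluing these nonlinear profiles together yields an approximate solution $\vec{u}_n^{\mathrm{app}}$ whose $L_{t,x}^4 l^2$ norm is bounded uniformly in $n$ (the nonlinear interaction between orthogonal profiles and the resonance structure from $\mathcal{R}(j)$ produce negligible cross terms), and whose error in the Schr\"odinger equation tends to $0$ in the dual Strichartz norm. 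The stability theorem then forces $\|\vec{u}_n\|_{L_{t,x}^4 l^2}$ to be bounded, contradicting its divergence. Therefore exactly one profile, say $\vec{\phi}^1$, must carry the full limiting mass $m_0$, all other profiles vanish, and $\vec{w}_n^1\to 0$ in $L_x^2 l^2$.

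The surviving nonlinear profile $\vec{u}$ is the desired minimal blowup solution with $\|\vec{u}\|_{L_x^2 l^2}=m_0$, and its $L_{t,x}^4 l^2$ norm is infinite on both ends of the maximal interval. To promote this to the almost-periodicity statement \eqref{eq2.10v20}, I reapply the same profile decomposition to any sequence $\vec{u}(t_n)$: the minimality argument rules out the appearance of more than one profile and of a nonvanishing remainder, so $\vec{u}(t_n)$ is precompact in $L_x^2 l^2$ modulo the group $g_n$, which yields continuous parameters $(x(t),\xi(t),N(t))$; persistence of regularity of the flow in $L_x^2 h^1$ upgrades the precompactness to $L_x^2 h^1$. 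Initial normalizations $N(0)=1$, $x(0)=\xi(0)=0$, and $N(t)\le 1$ throughout $I$ are enforced by an initial rescaling, translation, and boost, together with a standard rescaling at times of large frequency scale. The regularity bound \eqref{eq2.17v20} follows from a local-constancy/continuity argument: on any interval of duration $c N(t_0)^{-2}$ centered at $t_0$, small-data scattering after rescaling shows the flow is a small perturbation of the linear evolution, so $N$ and $\xi$ can vary by at most $O(N(t_0))$ and $O(N(t_0)^3)\cdot c N(t_0)^{-2}=O(c N(t_0))$ respectively over that interval.

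The main obstacle is the vector-valued linear profile decomposition in $L_x^2 h^1$. While the $\mathbb{R}^2$-component is a direct transcription of Keraani's argument via the refined bilinear Strichartz estimate of Tao-Visan and its analog on the waveguide, one must verify that the $h^1$ weight propagates through the weak-convergence extraction step (a Brezis-Lieb identity in $L_x^2 h^1$) and that the resonance set $\mathcal{R}(j)$ does not generate spurious cross terms when nonlinear profiles with distinct scales and Galilean frames are added. The Galilean invariance acts only on the $\mathbb{R}^2$ factor and commutes with the $h^1$ weight, which is what permits the orthogonality calculation to close; verifying this commutativity carefully and matching it to the decoupling of $\mathcal{R}(j)$-resonant interactions is where the bookkeeping is heaviest.
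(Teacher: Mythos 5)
The paper proves this theorem by citation: it invokes the standard concentration-compactness/rigidity machinery of Tao--Visan--Zhang and Killip--Visan, together with the defocusing-resonant-system adaptations in \cite{CGZ,YZ}, without reproducing the details. Your proposal is a faithful and essentially complete sketch of exactly that standard argument: a blowup sequence saturating the critical mass $m_0$, a linear profile decomposition in $L_x^2 h^1$ respecting the $\mathbb{R}^2$ symmetries, the mass-Pythagorean dichotomy forcing a single surviving profile via the stability theorem, and the upgrade to almost periodicity with local-constancy bounds for $N(t)$ and $\xi(t)$. You also correctly isolate the one nontrivial point peculiar to the vector-valued setting, namely that the $\mathbb{R}^2$ group actions commute with the $h^1$ weight so the Pythagorean decoupling extends to $L_x^2 h^1$ and the $\mathcal{R}(j)$-resonant cross-terms between asymptotically orthogonal profiles vanish; this is precisely what makes the argument transport from the scalar case. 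Since the paper defers to the literature at exactly the level of detail your sketch supplies, your route and theirs coincide.
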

By using the argument in \cite{D3,D1,D2,Killip-Visan1}, we have the following results of the almost periodic solution in the above theorem:
\begin{lemma}\label{le2.3v20}
(1) There exists $\delta(\vec{u}) > 0$ such that for any $t_0 \in I$,
\begin{align*}
\left\|\vec{u}\right\|_{L_{t,x}^4 l^2 \left( \left[t_0, t_0 + \frac{\delta}{N(t_0)^2} \right] \times \mathbb{R}^2  \times \ZZ \right)} \sim \left\|\vec{u}\right\|_{L_{t,x}^4 l^2 \left( \left[t_0 - \frac\delta{N(t_0)^2}, t_0 \right] \times \mathbb{R}^2  \times \ZZ \right)} \sim 1.
\end{align*}
(2) If $J$ is a characteristic interval which is defined to be an interval satisfying $\left\|\vec{u}\right\|_{L_{t,x}^4 l^2 (J\times \mathbb{R}^2 \times \ZZ)} = 1$, then for any $t_1,t_2 \in J$, we have $N(t_1) \sim_{m_0} N(t_2)$, and $|\xi(t_1) - \xi(t_2)| \lesssim N(J)$, where $N(J) := \sup\limits_{t\in J} N(t).$ In addition,
\begin{align*}
N(J) \sim \int_{J} N(t)^3 \,\mathrm{d}t \sim \inf\limits_{t\in J} N(t).
\end{align*}
\end{lemma}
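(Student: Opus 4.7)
The plan is to follow the standard analysis for minimal blow-up solutions in the concentration-compactness/rigidity framework, in the spirit of Killip--Visan and Dodson.

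For part (1), I first establish the upper bound. Using the mass-critical scaling symmetry of \eqref{eq1.1}, one may reduce to the case $N(t_0) = 1$, in which case the almost periodicity \eqref{eq2.10v20} frequency-localizes the rescaled solution on a bounded set. A direct application of the Strichartz estimates, together with the nonlinear estimate and the mass bound $\|\vec u\|_{L^2_x l^2} \le m_0 < \frac{1}{\sqrt 2}\|Q\|_{L^2}$, then yields a uniform upper bound on the scattering norm over a unit-length interval. The lower bound is more delicate. Were it to fail, one could extract a sequence of times $t_n \in I$ with $\|\vec u\|_{L_{t,x}^4 l^2([t_n, t_n + \delta/N(t_n)^2])} \to 0$. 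Invoking the stability theorem of Section~\ref{se2} with $e^{i(t-t_n)\Delta}\vec u(t_n)$ as an approximate solution, one propagates control of $\vec u$ forward on such an interval; iterating yields a global scattering norm bound on $\vec u$, contradicting its blow-up character.

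For part (2), since $J$ is characteristic, the estimate from part (1) implies that $J$ is built out of a bounded number of natural subintervals of duration $\delta/N(t)^2$. More precisely, let $t^* \in J$ approximately realize $N(J) = \sup_{t\in J} N(t)$. Each nearly disjoint sub-slice of $J$ of length $\sim N(t^*)^{-2}$ contributes $\sim 1$ to the scattering norm, forcing $|J| \lesssim N(t^*)^{-2}$. Conversely, for any $t_1 \in J$ the slice $[t_1 - \delta/N(t_1)^2, t_1 + \delta/N(t_1)^2] \cap I$ contributes $\sim 1$ to the norm on $J$, so it must occupy a definite fraction of $J$, yielding $|J| \gtrsim N(t_1)^{-2}$. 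Combining gives $N(t_1) \sim N(t_2) \sim N(J)$ for all $t_1, t_2 \in J$ and $|J| \sim N(J)^{-2}$.

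The bounds involving $\xi$ and the time integral are then immediate from \eqref{eq2.17v20}: integrating produces
\begin{align*}
|\xi(t_1) - \xi(t_2)| \le \int_J |\xi'(t)|\, \mathrm{d}t \lesssim \int_J N(t)^3\, \mathrm{d}t \sim N(J)^3 \cdot |J| \sim N(J),
\end{align*}
and the same chain of comparisons yields $\int_J N(t)^3\, \mathrm{d}t \sim N(J) \sim \inf_{t\in J} N(t)$. The main obstacle is the lower bound in part (1): obtaining the correct quantitative statement requires calibrating the stability argument so that the perturbation error is uniformly small as $t_0$ varies over $I$, and the almost-periodicity from Theorem~\ref{th4.9v51}---which supplies precompactness of the orbit modulo symmetries---is precisely what prevents the modulation parameters from escaping to infinity during the iteration.
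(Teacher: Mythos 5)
Your outline captures the right objects---compactness after rescaling, the stability theorem, and a covering of a characteristic interval---but two steps carry genuine gaps, and these are exactly the steps where the cited references (Dodson, Killip--Visan), to which the paper defers for this lemma, do the actual work.

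For the lower bound in part (1), you argue that if $\|\vec u\|_{L_{t,x}^4 l^2([t_n, t_n+\delta/N(t_n)^2])}\to 0$ then ``iterating yields a global scattering norm bound.'' This does not follow: smallness on a single interval of length $\delta/N(t_n)^2$ gives no control on adjacent intervals, so there is nothing to iterate, and the stability theorem cannot manufacture control you do not already have. The mechanism is compactness, not iteration. After rescaling so that $N(t_n)=1$, $x(t_n)=\xi(t_n)=0$, the almost periodicity \eqref{eq2.10v20} lets one pass to a subsequence with $\vec u(t_n)\to\vec\phi$ in $L_x^2 l^2$, where $\|\vec\phi\|_{L_x^2 l^2}=m_0>0$. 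If also $\|\vec u\|_{L_{t,x}^4 l^2([t_n,t_n+\delta])}\to 0$ after rescaling, the stability theorem forces the solution with data $\vec\phi$ to satisfy $\|\vec u_{\vec\phi}\|_{L_{t,x}^4 l^2([0,\delta])}=0$, hence $\vec u_{\vec\phi}\equiv 0$ on $[0,\delta]\times\mathbb{R}^2\times\mathbb{Z}$, contradicting $\vec\phi\ne 0$.

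For part (2), the covering argument is circular as written. You assert that ``each nearly disjoint sub-slice of $J$ of length $\sim N(t^*)^{-2}$ contributes $\sim 1$,'' but part (1) only gives this when the endpoint $t_0$ of the sub-slice already satisfies $N(t_0)\sim N(t^*)$---precisely what you are trying to prove. The reverse bound (that $[t_1-\delta/N(t_1)^2,\, t_1+\delta/N(t_1)^2]\cap I$ must occupy a definite fraction of $J$) is also not forced: $J$ could begin at $t_1$ and terminate well inside that slice. The missing ingredient is the local constancy of the scale function: there is $c(\vec u)>0$ such that $N(t_1)\sim_{\vec u} N(t_2)$ whenever $|t_1-t_2|\le c\,N(t_1)^{-2}$. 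This is an independent compactness fact, not a consequence of part (1); it is proved by the same rescaling-and-limit device, ruling out an unbounded ratio $N(t_2)/N(t_1)$ on a short time interval because the two $L^2$ limits would have to concentrate at incompatible scales. Once local constancy is available, your greedy covering of $J$ by $O(1)$ intervals of the form $[t_k, t_k+\delta/N(t_k)^2]$ (each contributing $\gtrsim 1$ by part (1)) propagates comparability of $N$ across $J$, and the conclusions $|J|\sim N(J)^{-2}$, $|\xi(t_1)-\xi(t_2)|\lesssim N(J)$, and $\int_J N(t)^3\,\mathrm{d}t\sim N(J)$ then follow exactly as you write.
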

By the Strichartz estimates and \eqref{eq2.10v20}, we have
\begin{lemma}\label{le2.6v20}
On the characteristic interval $J$, the following inequality holds %we have %,  %an interval with
%\begin{align*}
%\left\|\vec{u}\right\|_{L_{t,x}^4 l^2 (J \times \mathbb{R}^2\times \ZZ) } = 1,
%\end{align*}
%then we have
\begin{align*}
\left\|P_{|\xi - \xi(t)| \ge R N(t)} \vec{u}\right\|_{L_{t,x}^4 l^2 (J \times \mathbb{R}^2 \times \ZZ)}^4 + \int_J \int_{|x-x(t)|\ge R N(t)^{-1}} \left\|\vec{u}(t,x)\right\|_{l^2}^4 \,\mathrm{d}x \mathrm{d}t \le o_R(1).
\end{align*}
\end{lemma}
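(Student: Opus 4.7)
The plan is to bound the two tails separately, using the almost-periodicity estimate \eqref{eq2.10v20} from Theorem \ref{th4.9v51} together with the comparability of $N(t)$ and the control on $|\xi(t_1)-\xi(t_2)|$ on a characteristic interval $J$ from Lemma \ref{le2.3v20}(2). On $J$ set $N_J := N(J)$ and fix $t_0\in J$; Lemma \ref{le2.3v20}(2) gives $N(t)\sim N_J$ throughout $J$ and $|\xi(t)-\xi(t_0)|\lesssim N_J$, so up to inflating $R$ by a multiplicative constant one may swap the moving cutoffs $\{|x-x(t)|\ge R/N(t)\}$ and $\{|\xi-\xi(t)|\ge RN(t)\}$ for cutoffs at scales $R/N_J$ and $cRN_J$ centered about the fixed frequency $\xi(t_0)$.

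For the spatial tail, let $\chi_R(t,x)$ be a smooth cutoff supported in $\{|x-x(t)|\ge R/(2N(t))\}$, equal to one on $\{|x-x(t)|\ge R/N(t)\}$, and satisfying $|\nabla_x\chi_R|\lesssim N(t)/R$. Applying \eqref{eq5.27v32} pointwise in $t$ to $\chi_R\vec u$ and integrating over $J$ gives
\[
\int_J\int_{|x-x(t)|\ge R/N(t)}\|\vec u(t,x)\|_{l^2}^4\,\mathrm{d}x\,\mathrm{d}t\le\sup_{t\in J}\|\chi_R\vec u(t)\|_{L^2_x l^2}^{2}\cdot\int_J\|\chi_R\vec u(t)\|_{\dot H^1_x l^2}^2\,\mathrm{d}t.
\]
The first factor is $\le\eta$ when $R\ge 2C(\eta)$, by \eqref{eq2.10v20} and $\|\cdot\|_{l^2}\le\|\cdot\|_{h^1}$. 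For the second, splitting the gradient between $\chi_R$ and $\vec u$, the $\nabla\chi_R$-error contributes $(N_J/R)^2\,|J|\,m_0^2$, while the $\chi_R\nabla\vec u$-piece is bounded by $|J|\,\|\vec u\|_{L^\infty_t\dot H^1_x l^2}^2$; after a Galilean shift by $\xi(t_0)$ the frequency concentration from \eqref{eq2.10v20} yields $\|\nabla\vec u\|_{L^\infty_t L^2_x l^2}\lesssim N_J\,m_0$, and combined with $|J|\sim N_J^{-2}$ from Lemma \ref{le2.3v20}(2) the product is $O(m_0^2)$, so the whole term is $o_R(1)$.

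For the frequency tail, apply the Duhamel formula on $J$ from $t_0$ followed by the fixed projector $P := P_{|\xi-\xi(t_0)|\ge cRN_J}$; Strichartz with the $(4,4)$-admissible pair gives
\[
\|P\vec u\|_{L^4_{t,x}l^2(J)}\lesssim \|P\vec u(t_0)\|_{L^2_x l^2}+\|P\vec F(\vec u)\|_{L^{4/3}_{t,x}l^2(J)},
\]
and the linear piece is $\le\sqrt\eta$ by \eqref{eq2.10v20}. Decompose $\vec u=\vec u_{\rm lo}+\vec u_{\rm hi}$ with $\vec u_{\rm hi}:=P_{|\xi-\xi(t_0)|\ge cRN_J/10}\vec u$. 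A trilinear product of three $\vec u_{\rm lo}$-factors has spatial Fourier support where $P$ vanishes, so the surviving contributions all carry at least one $\vec u_{\rm hi}$-factor; by H\"older this yields
\[
\|P\vec F(\vec u)\|_{L^{4/3}_{t,x}l^2(J)}\lesssim \|\vec u_{\rm hi}\|_{L^4_{t,x}l^2(J)}\cdot\|\vec u\|_{L^4_{t,x}l^2(J)}^2.
\]
By Lemma \ref{le2.3v20}(1), $\|\vec u\|_{L^4_{t,x}l^2(J)}\sim 1$, and $\|\vec u_{\rm hi}\|_{L^4_{t,x}l^2(J)}=o_R(1)$ follows by iterating the same estimate at the lower cut $cRN_J/10$.

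The main obstacle is closing the frequency-side bootstrap: the cubic Duhamel term is self-referential in the tail quantity, and closure relies crucially on $\|\vec u\|_{L^4_{t,x}l^2(J)}\sim 1$ being order one, not arbitrarily large, on a characteristic interval, together with the $\sqrt\eta$-smallness of the linear piece. A secondary subtlety is that one must run the estimate over a nested sequence of frequency scales $cRN_J,\,cRN_J/10,\,cRN_J/100,\ldots$ to obtain the $o_R(1)$ decay without circularity; at each stage it suffices for the preceding stage to give only a finite bound, and finiteness is inherited from $\|\vec u\|_{L^4_{t,x}l^2(J)}\sim 1$.
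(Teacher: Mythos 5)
Your decomposition into a spatial tail and a frequency tail is the right shape, and the reduction to a single characteristic interval with fixed scales $N_J$ and a fixed frequency center is fine. But both halves of the argument have a genuine gap.

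\textbf{Spatial tail.} You estimate $\int_J\|\chi_R\vec u(t)\|_{L^4_x l^2}^4\,\mathrm{d}t$ by the Gagliardo--Nirenberg interpolation \eqref{eq5.27v32}, which forces you to control $\|\chi_R\vec u\|_{\dot H^1_x l^2}$, and you assert that the frequency-concentration estimate \eqref{eq2.10v20} yields $\|\nabla\vec u\|_{L^\infty_t L^2_x l^2}\lesssim N_J\,m_0$. This is not correct: \eqref{eq2.10v20} only says that the $L^2$-mass outside $|\xi-\xi(t)|\ge C(\eta)N(t)$ is $<\eta$; it is a tightness statement, not a support statement, and it gives no uniform $\dot H^1$ bound. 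The almost-periodic solution lives only in $C_t^0 L_x^2 h^1$, so $\|\vec u(t)\|_{\dot H^1_x l^2}$ need not even be finite, and therefore the right-hand side of your bound is a priori infinite. The fix is to interpolate the $L^4_{t,x}$ norm between $L^\infty_t L^2_x$ and the admissible Strichartz pair $L^3_t L^6_x$ (both on $J$): $\|\chi_R\vec u\|_{L^4_{t,x}l^2(J)}\le\|\chi_R\vec u\|_{L^\infty_t L^2_x l^2(J)}^{1/4}\|\chi_R\vec u\|_{L^3_t L^6_x l^2(J)}^{3/4}$. The first factor is $o_R(1)$ by \eqref{eq2.10v20}; the second is $\lesssim\|\vec u\|_{L^3_t L^6_x l^2(J)}\lesssim m_0+\|\vec u\|_{L^4_{t,x}l^2(J)}^3\sim1$ by Strichartz and Lemma \ref{le2.3v20}(1). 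No $\dot H^1$ information is needed.

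\textbf{Frequency tail.} Your Duhamel/Strichartz recursion $A(R)\lesssim\sqrt\eta(R)+\|\vec u\|_{L^4(J)}^2 A(R/10)$ does not close as written. The coefficient on $A(R/10)$ is a fixed absolute constant $B=C_{\mathrm{Str}}C_{\mathrm{H\"ol}}\|\vec u\|_{L^4(J)}^2$, and since $\|\vec u\|_{L^4(J)}=1$ on a characteristic interval there is no reason for $B<1$. Iterating $n$ times gives $A(R)\le C\sum_{k<n}B^k\sqrt{\eta(R/10^k)}+B^n A(R/10^n)$, and the terminal term $B^n\cdot O(1)$ does not go to zero (it diverges if $B>1$), so ``it suffices for the preceding stage to give only a finite bound'' is not enough --- you would need geometric decay of the coefficient. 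The standard way to rescue the bootstrap is to subdivide $J$ into a bounded number (depending on $m_0$ only, not on $R$) of subintervals on each of which $\|\vec u\|_{L^4}$ is small enough that $B<1$, run the recursion on each piece, and sum. But the cleaner route --- and the one the paper signals by pointing to ``Strichartz estimates and \eqref{eq2.10v20}'' --- is the same $L^\infty_t L^2_x$--$L^3_t L^6_x$ interpolation as above applied to $P_{|\xi-\xi(t_0)|\ge RN_J}\vec u$, which sidesteps the bootstrap entirely.

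Your observation that all-low trilinear interactions are annihilated by $P$ is correct and would be a useful ingredient in the subdivision variant, but on its own it does not rescue the iteration.
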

By the result in \cite{D3,D1,D2,YZ}, we have
\begin{theorem}\label{th1.12v20}
Suppose $\vec{u}(t,x)$ is the almost periodic solution to \eqref{eq1.1} in Theorem \ref{th4.9v51}, with
$\int_0^\infty N(t)^3 \,\mathrm{d}t = K < \infty$. Then for $0 \le s < 3$,
\begin{align*}
\|\vec{u} \|_{L_t^\infty \dot{H}_x^s l^2 ([0,\infty) \times \mathbb{R}^2\times \ZZ)} \lesssim  K^s.
\end{align*}
\end{theorem}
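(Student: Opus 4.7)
The plan is to follow the additional-regularity strategy of Killip--Visan and Dodson for the mass-critical NLS, as adapted to resonant systems in \cite{YZ,CGZ}: combine a reverse Duhamel representation with a frequency-localized bootstrap along characteristic intervals. The inputs I will rely on are Lemma \ref{le2.3v20}, the frequency concentration \eqref{eq2.10v20}, and the equivalence $\mathcal{N}(\vec u) \sim \|\vec u\|_{L_{t,x}^4 l^2}^4$ from \eqref{eq1.5v83}.

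First, I would partition $[0,\infty) = \bigsqcup_{k\ge 0} J_k$ into countably many characteristic intervals using Lemma \ref{le2.3v20}, so that $\|\vec u\|_{L_{t,x}^4 l^2(J_k)}=1$, $N(t) \sim N_k := N(J_k)$, and $|J_k|\sim N_k^{-2}$; summing Lemma \ref{le2.3v20}(2) gives $\sum_k N_k \sim \int_0^\infty N(t)^3 \, \mathrm{d}t = K$. Next, using almost periodicity, the hypothesis $\int_0^\infty N(t)^3 \, \mathrm{d}t < \infty$, and Lemma \ref{le2.6v20}, I would verify that $e^{-it\Delta}\vec u(t)\rightharpoonup 0$ weakly in $L_x^2 l^2$ as $t\to\infty$. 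This opens the reverse Duhamel identity
\begin{equation*}
P_{\ge M} \vec u(t_0) = i \int_{t_0}^{\infty} e^{i(t_0-s)\Delta} P_{\ge M} \vec F(\vec u(s)) \, \mathrm{d}s,
\end{equation*}
valid weakly in $L_x^2 l^2$ for each dyadic $M$ and every $t_0\in[0,\infty)$, which I then split into contributions from each $J_k$.

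The core of the argument is a bootstrap on $\mathcal{A}(s) := \sup_{M\in 2^{\mathbb{Z}}} M^{s}\,\|P_{\ge M}\vec u\|_{L_t^\infty L_x^2 l^2}$, starting from the trivial bound $\mathcal{A}(0)\lesssim m_0$ and aiming to show $\mathcal{A}(s)<\infty \Rightarrow \mathcal{A}(s+\theta)\lesssim K^{\theta}\mathcal{A}(s)$ for some fixed $\theta>0$. For the Duhamel contribution from $J_k$ with $N_k\ll M$, the frequency concentration \eqref{eq2.10v20} says that $\vec u$ lives essentially near $|\xi-\xi(t)|\lesssim N_k$, so $P_{\ge M}\vec F(\vec u)$ carries a Bernstein gain of $(N_k/M)^{\theta}$ after an in/out decomposition of the free propagator. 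For $J_k$ with $N_k\gtrsim M$, I would use Strichartz together with the trilinear bound $\|\vec F(\vec u)\|_{L_{t,x}^{4/3} l^2}\lesssim \|\vec u\|_{L_{t,x}^4 l^2}^3$, a consequence of \eqref{eq1.5v83}, Cauchy--Schwarz and Minkowski, to get a contribution of order $1$. Summing the resulting geometric series against $\sum_k N_k\sim K$ closes one induction step, and iterating finitely many times reaches any prescribed $s<3$, after which the bound $\|\vec u\|_{L_t^\infty \dot H^s_x l^2}\lesssim K^s$ follows by dyadic summation.

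The hard part will be the trilinear estimate in the regime $N_k \ll M$: one has to exploit the $\ell^2$ structure of $\vec F$ in a way that respects the resonant constraint $(j_1,j_2,j_3)\in\mathcal{R}(j)$ without spending the $(N_k/M)^{\theta}$ gain. Fortunately, the pointwise identity $\vec F_j(\vec u) = 2\|\vec u\|_{l^2}^2 u_j - |u_j|^2 u_j$ from \eqref{added1} collapses the delicate resonant sum into a scalar cubic nonlinearity, which is exactly what permits both the $L_{t,x}^{4/3} l^2$ trilinear bound and the Bernstein-type gain when $P_{\ge M}$ lands on a low-frequency input. The terminal exponent $s<3$ reflects the fact that the cubic nonlinearity can absorb at most one derivative per factor of $\vec u$; at $s=3$ this accounting saturates and the bootstrap ceases to close.
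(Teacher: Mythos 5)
The framework you describe---partition into characteristic intervals $J_k$ with $\sum_k N_k \sim K$, weak vanishing of $e^{-it\Delta}\vec u(t)$, reverse Duhamel, and a bootstrap on $\mathcal A(s)=\sup_M M^s\|P_{\ge M}\vec u\|_{L_t^\infty L_x^2 l^2}$---is indeed the strategy in the references the paper cites (\cite{D3,D1,D2,YZ}). But the step you call a ``Bernstein gain of $(N_k/M)^{\theta}$ after an in/out decomposition'' is precisely the crux, and as written it does not close. The only quantitative input the present paper makes available in the regime $N_k\ll M$ is the compactness statement \eqref{eq2.10v20} together with Lemma \ref{le2.6v20}, which yield merely a qualitative $o_R(1)$ decay of the high-frequency tail on each characteristic interval, with no polynomial rate. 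If instead you invoke the bootstrap hypothesis $\|P_{\ge M/4}\vec u\|_{L_t^\infty L_x^2 l^2}\lesssim (K/M)^s$ together with the Strichartz/trilinear bound, you recover the \emph{same} power $(K/M)^s$ on the right, with no improvement factor, so $\mathcal A(s)<\infty\Rightarrow \mathcal A(s+\theta)<\infty$ does not follow from what you have written. In Dodson's $d=2$ paper (the directly relevant one here, since the problem lives on $\mathbb R^2$), the quantitative gain is supplied by a \emph{long-time Strichartz estimate}, a separate and substantial bilinear/induction-on-frequency lemma; in the $d\ge 3$ case a double-Duhamel trick does the work instead. Your sketch collapses this into a single Bernstein observation, which is where the genuine gap lies: one would have to state and prove (the resonant-system analogue of) the long-time Strichartz estimate, or exhibit a concrete substitute, before the induction step $\mathcal A(s)\Rightarrow K^{\theta}\mathcal A(s)$ is believable.

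A smaller point: your explanation of the cap $s<3$ (``the cubic nonlinearity can absorb at most one derivative per factor'') is not the right heuristic. The exponent $3$ is tied to the scaling-critical quantity $\int_J N(t)^3\,\mathrm{d}t\sim N(J)$ from Lemma \ref{le2.3v20}, which is what makes the sum over characteristic intervals come out to $K$ with the correct homogeneity $\|\vec u\|_{\dot H^s}\lesssim K^s$; the cubic degree of the nonlinearity enters only indirectly, through the length $|J_k|\sim N_k^{-2}$ of the characteristic intervals. Also note that the ``near-frequency'' regime $N_k\gtrsim M$ you discuss is essentially vacuous: since $N(t)\le 1$ by Theorem \ref{th4.9v51} and $K\gtrsim 1$, the claimed bound is trivial from mass conservation whenever $M\lesssim K$, so only the regime $N_k\ll M$ ever matters, and that is exactly where the missing quantitative input is needed.
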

The errors arising from the the Fourier truncation can be estimated for a variety of potentials. We refer to Theorem 7.1 in \cite{YZ} and also Theorem 5.3 in \cite{D1} for a proof of $a(t,x) = \frac{x}{|x|}$, which can be extended to the more general potentials.
\begin{theorem}\label{th1.13v20}
Suppose $\vec{u}$ is the almost periodic solution of \eqref{eq1.1} with $\int_0^T N(t)^3 \,\mathrm{d}t = K$, and there exists a constant $R$ such that
\begin{align*}
| {a}(t,x)| \le R, \ |\nabla_x  {a}(t,x)| \le \frac{R}{|x|}, \ {a}(t,x) = -  {a}(t,-x), \text{ and }
\left\|\partial_t  {a}(t,x) \right\|_{L_x^1(\mathbb{R}^2)} \le R.
\end{align*}
Then the Fourier truncation error arising from $P_{\le CK} \vec{F}(\vec{u}) -\vec{F}(P_{\le CK} \vec{u})$ is bounded by $R o(K)$.
\end{theorem}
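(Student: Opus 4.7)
The plan is to mirror the Fourier truncation error estimate carried out in \cite{D1} (Theorem 5.3) and \cite{YZ} (Theorem 7.1) for the scalar and the defocusing resonant system, and to upgrade it to the present focusing vector-valued setting by leveraging the equivalence \eqref{eq1.5v83} so that the coupled nonlinearity can be handled exactly like a cubic scalar nonlinearity for Littlewood--Paley purposes.

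First I would expand the commutator
\begin{align*}
P_{\le CK}\vec{F}(\vec{u})-\vec{F}\left(P_{\le CK}\vec{u}\right)
 = P_{\le CK}\Bigl[\vec{F}(\vec{u})-\vec{F}\bigl(P_{\le CK}\vec{u}\bigr)\Bigr]
   -P_{>CK}\vec{F}\left(P_{\le CK}\vec{u}\right),
\end{align*}
and, using the trilinear structure of $\vec{F}$ together with \eqref{eq1.5v83}, organise the difference into a finite sum of trilinear terms each of which has at least one factor lying at frequency $\gtrsim CK$ (either in one of the inputs, or in the output in the second term). Pairing the truncation error against the Morawetz weight, one must estimate an expression of the form
\begin{align*}
\int_0^T\!\!\int_{\mathbb{R}^2} a(t,x)\,\nabla\!\cdot\!\Bigl\langle \vec{w}(t,x),\; P_{\le CK}\vec{F}(\vec{u})-\vec{F}(P_{\le CK}\vec{u})\Bigr\rangle_{\ell^2}\,\mathrm{d}x\,\mathrm{d}t,
\end{align*}
where $\vec{w}$ is some Strichartz-controlled quantity (a suitable derivative of $P_{\le CK}\vec{u}$); the structural assumptions $|a|\le R$, $|\nabla_x a|\le R/|x|$, the antisymmetry $a(t,x)=-a(t,-x)$, and the $L^1_x$-bound on $\partial_t a$ mean that we can absorb $a$ and $\nabla a$ into Hardy-type inequalities, costing only a factor of $R$.

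Next I would quantify the smallness of the frequency-truncated pieces. Splitting into dyadic pieces and using Lemma \ref{le2.6v20} together with the frequency concentration \eqref{eq2.10v20}, one obtains $\|P_{|\xi-\xi(t)|\ge CK}\vec{u}\|_{L_{t,x}^4\ell^2([0,T]\times\mathbb{R}^2\times\ZZ)}^4 \le o_{C}(1)$ once one telescopes over characteristic intervals, since $N(t)\le 1$ and the total number of characteristic sub-intervals is $\sim K$. For the high-frequency output term $P_{>CK}\vec{F}(P_{\le CK}\vec{u})$ one uses Bernstein on one of the low-frequency factors to extract a factor of $(CK)^{-1}$ paid by a derivative on that factor, and then invokes Theorem \ref{th1.12v20} to control $\|\nabla P_{\le CK}\vec{u}\|_{L_t^\infty L_x^2\ell^2}\lesssim K$. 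Combining with a bilinear Strichartz argument between the high and low frequency factors yields a net gain producing the $o(K)$ factor.

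The main obstacle, as is usual in these Fourier truncation arguments, will be to produce the \emph{vanishing} rate $o(K)$ rather than merely a bound $O(K)$: one must trade derivatives against Bernstein factors of $(CK)^{-s}$ while keeping the other factors under the control of Lemma \ref{le2.3v20} and Theorem \ref{th1.12v20}, and then let $C\to\infty$ at the end. The vector-valued and infinite-coupling structure only enters through the inner product $\langle\cdot,\cdot\rangle_{\ell^2}$ and through the identification of $\mathcal N(\vec u)$ with $\|\vec u\|_{L_{t,x}^4\ell^2}^4$, so all dyadic manipulations remain identical to those of \cite[Theorem 5.3]{D1} and \cite[Theorem 7.1]{YZ}. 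Once the $o(K)$ gain is secured for each dyadic block and summed, the remaining $R$ factor falls out from the pointwise and $L^1_x$ bounds on $a$, $\nabla_x a$ and $\partial_t a$, completing the argument.
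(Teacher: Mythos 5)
The paper does not actually supply a proof of this theorem --- it simply cites Theorem~7.1 of \cite{YZ} and Theorem~5.3 of \cite{D1} and remarks that the argument there (for $a(t,x)=x/|x|$) extends to the class of weights described. Your proposal aims to reconstruct that argument, and the high-level architecture (commutator expansion with at least one high-frequency factor, absorbing $a$ and $\nabla a$ using the stated pointwise, Hardy-type, antisymmetry and $L_x^1$ bounds to produce the prefactor $R$, and summing a per-interval gain over characteristic intervals) is the right one. However, two steps as written would not go through.

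First, the bookkeeping is off. You claim that Lemma~\ref{le2.6v20} plus ``telescoping over characteristic intervals'' yields
$\left\|P_{|\xi-\xi(t)|\ge CK}\vec{u}\right\|_{L_{t,x}^4\ell^2([0,T]\times\mathbb{R}^2\times\ZZ)}^4 \le o_{C}(1)$, on the grounds that ``the total number of characteristic sub-intervals is $\sim K$.'' Neither half of this is correct. Lemma~\ref{le2.3v20} gives $\sum_k N(J_k)\sim K$, and since $N(J_k)\le 1$ the \emph{number} of characteristic intervals in $[0,T]$ is $\ge K$ but is in general unbounded in terms of $K$. Moreover, even if the number of intervals were $\sim K$, summing a per-interval estimate $o_C(1)$ from Lemma~\ref{le2.6v20} would give at best $K\cdot o_C(1)$, not $o_C(1)$; a global bound like the one you wrote is precisely a long-time Strichartz estimate, which is a separate and substantial ingredient in \cite{D1,YZ} and is not a corollary of Lemma~\ref{le2.6v20}. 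What one actually needs --- and what is used implicitly in the proof of Theorem~\ref{th5.1v20} --- is the factor $N(t)\sim N(J_k)$ that the Morawetz weight contributes on each $J_k$: each characteristic interval then yields an error $\lesssim o_C(1)\,N(J_k)$, and summing gives $o_C(1)\sum_k N(J_k)\sim o_C(1)K$, i.e.\ $o(K)$. Your write-up does not extract this weight factor anywhere, so the final $o(K)$ is not reached by the reasoning you give.

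Second, you invoke Theorem~\ref{th1.12v20} to control $\left\|\nabla P_{\le CK}\vec{u}\right\|_{L_t^\infty L_x^2\ell^2}\lesssim K$ and to feed a Bernstein gain. Theorem~\ref{th1.12v20} assumes $\int_0^\infty N(t)^3\,\mathrm{d}t<\infty$, but the Fourier-truncation error estimate of Theorem~\ref{th1.13v20} is deployed precisely in the opposite regime $\int_0^\infty N(t)^3\,\mathrm{d}t=\infty$ (Theorem~\ref{th5.1v20}), where that regularity statement is unavailable. The low-frequency bound $\left\|\nabla P_{\le CK}\vec{u}\right\|_{L_t^\infty L_x^2\ell^2}\lesssim CK\,m_0$ follows from Bernstein alone and needs no appeal to Theorem~\ref{th1.12v20}; but then the Bernstein loss and gain exactly cancel, and one does \emph{not} recover a vanishing factor by this route. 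The decay has to come instead from the per-interval smallness of Lemma~\ref{le2.6v20} multiplied by $N(J_k)$, as above. In short: the route you describe has the right shape, but the two key quantitative mechanisms (where the $o(\cdot)$ and where the $K$ come from) are each attributed to the wrong source, and with the attributions you give the argument would only deliver $O(K)$ (or worse), not $R\,o(K)$.
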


\section{Exclusion of the almost periodic solution}\label{se5v30}
In this section, we will exclude the almost periodic solution in Theorem \ref{th4.9v51}.
We consider the cases (i) $\int_0^\infty N(t)^3 \,\mathrm{d}t$ is finite or (ii) infinite separately.
If it is infinite, we exclude the almost periodic solution by exploiting the interaction Morawetz estimate,
otherwise, we exclude the almost periodic solution by exploiting the conservation of energy.

\subsection{Exclusion of the almost periodic solution when $\int_0^\infty N(t)^3 \,\mathrm{d}t = \infty$}
In this subsection, we exclude the case when $\int_0^\infty N(t)^3 \,\mathrm{d}t = \infty$ by the frequency localized interaction Morawetz estimate.
First, we can replace the frequency scale function $N(t)$ by a slowly varying frequency scale function of the almost periodic solution in Theorem \ref{th4.9v51}. Following the argument in \cite{D4}, we can use a smoothing algorithm developed by B. Dodson, and replace $N(t)$ with a slowly varying $\tilde{N}(t)$ and
 $\tilde{N}(t) \le N(t)$. Furthermore, by the construction, we can make sure
\begin{align}\label{eq5.1v100new}
\frac{\left| \tilde{N}'(t)\right|}{ \tilde{N}(t)^3} \lesssim 1, \forall\, t > 0 ,
\end{align}
 and if $ \tilde{N}'(t)\ne 0$, then $ \tilde{N}(t) = N(t)$.

By applying the argument in \cite{D4}, we get
\begin{lemma}\label{le6.3v20}
For any $\delta > 0$, we can take a smoother $\tilde{N}(t)$ such that
\begin{align}\label{eq5.1v100}
\liminf\limits_{T\to \infty} \frac{ \int_0^T | \tilde{N} '(t)| \,\mathrm{d}t}
{ \int_0^T  \tilde{N} (t) \mathcal{N} \left( \overrightarrow{P_{\le CK} u} \right)(t) \,\mathrm{d}t } \le \delta .
\end{align}
\end{lemma}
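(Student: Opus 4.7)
The plan is to adapt Dodson's smoothing algorithm from \cite{D4} for the mass-critical NLS to the present infinite-coupled setting. The core ingredients are the characteristic-interval structure of Lemma \ref{le2.3v20}, the equivalence $\mathcal{N}(\vec u)\sim\|\vec u\|_{L^4_{t,x}l^2}^4$ of \eqref{eq1.5v83}, and the fact that $\int_0^\infty N(t)^3\,\mathrm{d}t=\infty$ guarantees infinitely many characteristic intervals in $[0,\infty)$. Fix $\delta>0$ and a small parameter $\eta=\eta(\delta)>0$ to be chosen later.

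First, I would partition $[0,\infty)$ into consecutive characteristic intervals $J_1,J_2,\dots$ on each of which $\|\vec u\|_{L^4_{t,x}l^2(J_k)}=1$ and $N(t)$ is comparable to a constant $N(J_k)$. On each $J_k$, set $\tilde N_k:=\inf_{t\in J_k} N(t)$; by Lemma \ref{le2.3v20}, $\tilde N_k\sim N(J_k)$, $|J_k|\sim N(J_k)^{-2}$, and $\int_{J_k}\tilde N(t)\,\mathcal{N}(\vec u)\,\mathrm{d}t\sim\tilde N_k$. Provided $C$ (in $P_{\le CK}$) is large enough compared to the compactness modulus in \eqref{eq2.10v20}, the Fourier truncation leaves this equivalence intact up to a constant. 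Next, I aggregate adjacent blocks $J_k$ as long as the ratios $\tilde N_{k+1}/\tilde N_k$ remain in $[1-\eta,1+\eta]$, replace $\tilde N$ on each aggregated block by the minimum of the local $\tilde N_k$'s, and finally mollify the resulting step function at scale $|J_k|$ to produce a $C^1$ function. This construction preserves $\tilde N(t)\le N(t)$ and yields the admissibility bound $|\tilde N'(t)|/\tilde N(t)^3\lesssim 1$ of \eqref{eq5.1v100new}, with $\tilde N'(t)\ne 0$ only at boundaries between aggregated blocks (where $\tilde N(t)=N(t)$).

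The second step is to estimate the numerator and denominator separately. After aggregation, $\tilde N$ is essentially piecewise constant with transitions occurring only when the scale changes by a factor $\ge 1+\eta$; listing the transition times $\tau_1<\tau_2<\cdots<\tau_M\le T$, one has $\int_0^T|\tilde N'(t)|\,\mathrm{d}t\lesssim \sum_\ell |\tilde N(\tau_{\ell+1})-\tilde N(\tau_\ell)|$. On the other hand, between two successive transitions each aggregated characteristic interval contributes at least a constant multiple of $\tilde N(\tau_\ell)$ to the denominator, so $\int_{\tau_\ell}^{\tau_{\ell+1}}\tilde N(t)\,\mathcal{N}(\overrightarrow{P_{\le CK}u})\,\mathrm{d}t\gtrsim m_\ell\,\tilde N(\tau_\ell)$, where $m_\ell$ is the number of aggregated blocks between the two transitions. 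Because the scale changes by at most a factor $1+\eta$ within each aggregation and then by a geometric factor across transitions, a double counting of "up" versus "down" transitions (summing a geometric series) yields $\int_0^T|\tilde N'(t)|\,\mathrm{d}t\le C\eta \int_0^T \tilde N(t)\,\mathcal{N}(\overrightarrow{P_{\le CK}u})\,\mathrm{d}t$. Taking $\eta$ so small that $C\eta<\delta$ and letting $T\to\infty$ along a suitable sequence gives \eqref{eq5.1v100}.

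The main obstacle is balancing two competing demands: $\tilde N$ must remain small enough ($\tilde N(t)\le N(t)$ together with $|\tilde N'|\lesssim \tilde N^3$) so that the frequency-localized Morawetz estimate still captures the relevant portion of $\vec u$, yet flat enough that its total variation is strictly smaller than the accumulated nonlinear density. The quantitative heart of the argument is Dodson's double-counting of "up" versus "down" transitions, which here requires showing that $\tilde N$ cannot drift monotonically for too long relative to the accumulation of $\mathcal{N}(\vec u)$; this in turn relies on the hypothesis $\int_0^\infty N(t)^3\,\mathrm{d}t=\infty$ and on Lemma \ref{le2.3v20}(2), since a long monotone excursion of $N(t)$ would either violate the characteristic-interval comparability $N(t_1)\sim_{m_0} N(t_2)$ or produce denominator mass without contributing to variation, in either case improving the ratio.
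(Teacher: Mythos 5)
The paper does not actually prove Lemma~\ref{le6.3v20}; it only invokes Dodson's smoothing algorithm from \cite{D4}. So the comparison here is against Dodson's construction itself, and your reconstruction has a substantive gap in the smoothing step.

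The high-level framework you set up is correct: partition $[0,\infty)$ into characteristic intervals via Lemma~\ref{le2.3v20}, use the equivalence \eqref{eq1.5v83} so that each $J_k$ contributes $\sim\tilde N(J_k)$ to the denominator, and then control the total variation of the smoothed scale. The problem is the smoothing rule. Aggregating consecutive intervals while $\tilde N_{k+1}/\tilde N_k\in[1-\eta,1+\eta]$ and then taking the block minimum does \emph{not} build in the essential asymmetry of Dodson's algorithm, namely that $\tilde N$ follows $N$ downward immediately but is allowed to recover only slowly. Without that asymmetry, the aggregated blocks can be singletons. Concretely, if $N(J_k)$ alternates between two levels separated by a fixed factor $\gg 1+\eta$ (perfectly compatible with $|N'(t)|\lesssim N(t)^3$ and with $\int_0^\infty N(t)^3\,\mathrm{d}t=\infty$, since $\sum_k N(J_k)$ diverges), then every $J_k$ is its own block, each transition contributes a jump of size $\sim \tilde N_k$ to the total variation, and each block contributes only $\sim \tilde N_k$ to the denominator. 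The ratio is then bounded below by an absolute constant, no matter how small $\eta$ is, and the ``double counting of up versus down transitions'' you invoke cannot rescue it because the jumps at the same pair of levels recur indefinitely and do not sum as a convergent geometric series. In Dodson's construction this scenario is defused precisely because $\tilde N$ does not chase $N$ back up: when $N$ returns to the higher level, $\tilde N$ increases only at a rate $\lesssim \varepsilon\,\tilde N^3$ per unit time, so an increasing excursion of height $\Delta$ in $\tilde N$ forces $\gtrsim \Delta/\varepsilon$ of denominator mass, which is what drives the ratio to $O(\varepsilon)$.

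Two secondary issues, both stemming from the same missing mechanism. First, after mollifying a step function with jumps $\sim\tilde N_k$ at scale $|J_k|\sim N(J_k)^{-2}$, the resulting derivative is $\sim\tilde N_k N(J_k)^2$, and the required bound $|\tilde N'|\lesssim\tilde N^3$ forces $\tilde N_k\gtrsim N(J_k)$ at such a transition; your block-minimum construction does not guarantee this, whereas in Dodson's scheme $\tilde N$ changes only at times when it touches $N$, which is what makes the property ``$\tilde N'(t)\ne0\Rightarrow\tilde N(t)=N(t)$'' hold automatically. Second, your claim that $\tilde N(t)=N(t)$ at block boundaries is not supplied by the construction: the mollified function in a neighborhood of a jump interpolates between two block minima, neither of which need coincide with $N(t)$ there.
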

In the following, we still take $N(t)$ as $\tilde{N}(t)$. And therefore, the $N(t)$ satisfies \eqref{eq5.1v100new} and \eqref{eq5.1v100}, and we will exclude the case when $\int_0^\infty N(t)^3 \,\mathrm{d}t = \infty$ by the interaction Morawetz estimate. The interaction Morawetz estimate is developed by
J. Colliander, M. Keel, G. Staffilani, H. Takaoka, and T. Tao \cite{CKSTT1}, % developed the interaction Morawetz estimate,
 which is used to prove the scattering of the nonlinear Schr\"odinger equation \cite{CKSTT0,TVZ1,D1,D2,D3} in the non-radial case. By direct calculation, we obtain the following lemma on the interaction Morawetz estimate for \eqref{eq1.1}.
\begin{lemma}\label{MoraEst}
For a weight function $a: \mathbb{R}^2 \to \mathbb{R} $,
%the solution $\vec{u}$ of \eqref{eq1.1}, define
%\begin{align*}
%M_a^0(t) = 2 \sum\limits_{j \in \mathbb{Z}} \int_{\mathbb{R}^2} \nabla a(x) \Im \left(\bar{u}_j \nabla u_j \right)(t,x) \,\mathrm{d}x,
%\end{align*}
%we have
%\begin{align*}
%\frac{ \mathrm{d}}{\mathrm{d}t}  M_a^0(t)
%& = - \sum\limits_{j \in \mathbb{Z}} \int \Delta a(x)  \sum\limits_{\mathcal{R}(j)}  \left( \bar{u}_j u_{j_1} \bar{u}_{j_2} u_{j_3}  \right)(t,x) \,\mathrm{d}x
%+ 4 \sum\limits_{j \in \mathbb{Z}} \int \partial_{x_k} \partial_{x_l} a(x) \Re  \left( \partial_{x_k} \bar{u}_j \partial_{x_l} u_j \right)(t,x) \,\mathrm{d}x
%\\
%& \quad - \sum\limits_{j \in \mathbb{Z}} \int \Delta_x^2 a(x) |u_j(t,x)|^2 \,\mathrm{d}x.
%\end{align*}
let
\begin{align*}
M(t) = 2  \sum\limits_{j,j'\in \mathbb{Z}} \int_{\mathbb{R}^2} \int_{\mathbb{R}^2} |u_{j'}(t,y)|^2 \nabla_x a(x-y) \cdot \Im( \bar{u}_j \nabla u_j)(t,x) \,\mathrm{d}x \mathrm{d}y,
\end{align*}
we have
\begin{align*}
M'(t) & = \sum\limits_{j,j' \in \mathbb{Z}} \sum\limits_{1 \le k, l \le 2}
4 \int_{\mathbb{R}^2} \int_{\mathbb{R}^2} \partial_{x_k} \partial_{x_l} a(x-y) \Re ( \partial_{x_k} u_j \partial_{x_l} \bar{u}_j)(t,x) |u_{j'}(t,y)|^2 \,\mathrm{d}x \mathrm{d}y
\\
& \ - \sum\limits_{j,j'\in \mathbb{Z}} \int_{\mathbb{R}^2} \int_{\mathbb{R}^2}  \Delta_x^2 a(x-y) |u_j(t,x)|^2 |u_{j'}(t,y)|^2 \,\mathrm{d}x \mathrm{d}y
\\
& \ - \sum\limits_{j,j'\in \mathbb{Z}} \int_{\mathbb{R}^2} \int_{\mathbb{R}^2} \Delta_x a(x-y) |u_{j'}(t,y)|^2 \sum\limits_{\mathcal{R}(j)} ( \bar{u}_j u_{j_1} \bar{u}_{j_2} u_{j_3} )(t,x) \,\mathrm{d}x \mathrm{d}y
\\
& \ - 4 \sum\limits_{j,j' \in \mathbb{Z}}  \sum\limits_{1 \le k, l \le 2}
\int_{\mathbb{R}^2} \int_{\mathbb{R}^2} \partial_{x_l} a(x-y) \Im ( \bar{u}_j \partial_{x_l} u_j)(t,x) \partial_{y_k} \Im ( \bar{u}_{j'} \partial_{y_k} u_{j'})(t,y) \,\mathrm{d}x \mathrm{d}y.
\end{align*}
\end{lemma}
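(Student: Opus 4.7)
The plan is to differentiate $M(t)$ in time and invoke two local identities for \eqref{eq1.1}: the mass continuity equation (on the $y$-variable, where the argument $|u_{j'}(y)|^2$ sits) and the component-wise virial/momentum identity (on the $x$-variable, where $\Im(\bar u_j\nabla u_j)(x)$ sits). The four terms in the claim will emerge one-to-one from these two identities after integration by parts in $x$ and $y$, using the elementary relation $\partial_{y_k}a(x-y)=-\partial_{x_k}a(x-y)$.

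First I would derive the continuity equation for the system. From $\partial_t u_j=i\Delta u_j+i\vec F_j(\vec u)$ one gets pointwise
$\partial_t|u_{j'}|^2+2\partial_{y_k}\Im(\bar u_{j'}\partial_{y_k}u_{j'})=-2\Im(\bar u_{j'}\vec F_{j'}(\vec u))$.
Summing over $j'$, the right-hand side vanishes because the quartic sum $\sum_{j'}\sum_{\mathcal R(j')}\bar u_{j'}u_{j_1}\bar u_{j_2}u_{j_3}$ is real: the map $(j',j_1,j_2,j_3)\mapsto(j_1,j',j_3,j_2)$ is a bijection of the constraint set $\{j_1-j_2+j_3=j',\ j_1^2-j_2^2+j_3^2=j'^2\}$ sending each monomial to its complex conjugate, exactly as used in the proof of \eqref{eq1.5v83}. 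This gives $\partial_t\sum_{j'}|u_{j'}|^2+2\partial_{y_k}\sum_{j'}\Im(\bar u_{j'}\partial_{y_k}u_{j'})=0$.

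Next I would record the pointwise virial identity per component:
\begin{align*}
\partial_t\Im(\bar u_j\partial_{x_l}u_j) = -2\partial_{x_k}\Re(\partial_{x_k}\bar u_j\,\partial_{x_l}u_j) + \tfrac12\partial_{x_l}\Delta|u_j|^2 + \partial_{x_l}\Re(\bar u_j\vec F_j) - 2\Re(\bar{\vec F}_j\,\partial_{x_l}u_j),
\end{align*}
a direct consequence of the equation. Splitting $M'(t)=M_1'(t)+M_2'(t)$ where $M_1'$ differentiates $|u_{j'}(t,y)|^2$ and $M_2'$ differentiates $\Im(\bar u_j\partial u_j)(t,x)$: in $M_1'$, substitute the continuity equation and integrate by parts in $y_k$ once; this produces precisely the fourth term of the statement. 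In $M_2'$, substitute the virial identity and treat the four contributions separately. The kinetic term yields, after one integration by parts in $x_k$, the first (positive-definite) term. The $\tfrac12\partial_{x_l}\Delta|u_j|^2$ piece yields, after three successive integrations by parts in $x$, the $-\Delta^2 a$ term.

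The subtle part, and what I expect to be the main obstacle, is combining the two nonlinear pieces from the virial identity into the single third term of the statement with the correct coefficient. The key algebraic identity, again forced by the pair-swap symmetry of $\mathcal R(j)$, is
\begin{align*}
\sum_{j\in\mathbb Z}2\Re\bigl(\bar{\vec F}_j\,\partial_{x_l}u_j\bigr) \;=\; \tfrac12\,\partial_{x_l}\!\sum_{j\in\mathbb Z}\sum_{(j_1,j_2,j_3)\in\mathcal R(j)}\bar u_j u_{j_1}\bar u_{j_2}u_{j_3},
\end{align*}
which one checks by writing the total derivative of the (real) quartic sum as four summands and using the bijection above to identify them in pairs. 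With this identity, the $-4\Re(\bar{\vec F}_j\partial_{x_l}u_j)$ contribution equals $-\partial_{x_l}$ of the quartic sum (after the $\tfrac12$ cancels a factor of $2$), and integrating $\partial_{x_l}a\cdot\partial_{x_l}(\,\cdot\,)$ by parts in $x_l$ converts it into $+\Delta a$ times the quartic sum; the $+2\partial_{x_l}\Re(\bar u_j\vec F_j)$ contribution likewise becomes $-2\Delta a$ times the same quartic sum, and $+1-2=-1$ yields exactly the third term. All remaining manipulations are routine applications of the product rule and integration by parts in $x$ (or $y$) in $\mathbb R^2$, justified on Schwartz data and extended by approximation.
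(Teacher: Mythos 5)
Your proposal is correct and is exactly the ``direct calculation'' the paper alludes to (the paper gives no written proof). The decomposition into (i) a continuity equation for $\sum_{j'}|u_{j'}|^2$ at the $y$-variable and (ii) a momentum/virial density identity for $\Im(\bar u_j\partial_{x_l}u_j)$ at the $x$-variable, followed by integration by parts in $x$, is the standard route to interaction Morawetz identities, and all the sign and constant bookkeeping checks out: the kinetic term gives the $4\partial_{x_k}\partial_{x_l}a\,\Re(\cdots)$ term, the $\tfrac12\partial_{x_l}\Delta|u_j|^2$ piece gives the $-\Delta_x^2 a$ term, and the two nonlinear pieces combine with coefficients $-2$ and $+1$ to give the $-\Delta_x a$ term.

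Two small remarks. First, for $M_1'$ there is actually \emph{no} integration by parts needed in $y$: substituting the continuity equation $\partial_t\sum_{j'}|u_{j'}|^2=-2\sum_{j'}\partial_{y_k}\Im(\bar u_{j'}\partial_{y_k}u_{j'})$ into $M_1'=2\sum_{j,j'}\iint\partial_t|u_{j'}(y)|^2\,\partial_{x_l}a(x-y)\,\Im(\bar u_j\partial_{x_l}u_j)(x)\,\mathrm{d}x\,\mathrm{d}y$ produces the fourth term verbatim, since the statement is written with $\partial_{y_k}\Im(\cdots)$ rather than with $\partial_{y_k}\partial_{x_l}a$. Second, the algebraic identity
\begin{align*}
\sum_{j}2\Re\bigl(\bar{\vec F}_j\,\partial_{x_l}u_j\bigr)=\tfrac12\,\partial_{x_l}\sum_j\sum_{(j_1,j_2,j_3)\in\mathcal R(j)}\bar u_j u_{j_1}\bar u_{j_2}u_{j_3}
\end{align*}
and the vanishing of $\sum_j\Im(\bar u_j\vec F_j)$ that you deduce from the pair-swap bijection on $\mathcal R(j)$ are also immediate from the closed form \eqref{added1}: writing $N(\vec u)=2\bigl(\sum_k|u_k|^2\bigr)^2-\sum_j|u_j|^4$ and $\vec F_j=2\bigl(\sum_k|u_k|^2\bigr)u_j-|u_j|^2u_j$, one has $\sum_j\Re(\bar{\vec F}_j\partial_{x_l}u_j)=\bigl(\sum_k|u_k|^2\bigr)\partial_{x_l}\sum_j|u_j|^2-\tfrac14\partial_{x_l}\sum_j|u_j|^4=\tfrac14\partial_{x_l}N(\vec u)$, and $\sum_j\bar u_j\vec F_j=N(\vec u)$ is manifestly real. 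Either derivation is fine; the closed form just avoids manipulating the index set.
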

For any $T > 0$, define
\begin{align*}
K(T) = \int_0^T N(t)^3 \,\mathrm{d}t.
\end{align*}

\begin{theorem}\label{th5.1v20}
If $\vec{u}$ is the almost periodic solution to \eqref{eq1.1} satisfying $\int_0^\infty N(t)^3 \,\mathrm{d}t = \infty$
in Theorem \ref{th4.9v51}, then $\vec{u} =0$.
\end{theorem}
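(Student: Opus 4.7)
The plan is to apply a low-frequency localized interaction Morawetz estimate, in the spirit of Dodson~\cite{D4}, to a Galilean-shifted and frequency-truncated version of the almost periodic solution, and then extract simultaneous upper and lower bounds that force $K(T):=\int_0^T N(t)^3\,\mathrm{d}t$ to stay bounded as $T\to\infty$, contradicting the hypothesis. First, I would adopt the smoothed frequency scale function $\tilde N(t)\le N(t)$ supplied by Lemma~\ref{le6.3v20} (renaming $\tilde N$ as $N$), which satisfies both \eqref{eq5.1v100new} and \eqref{eq5.1v100} and still diverges $\int_0^\infty N(t)^3\,\mathrm{d}t=\infty$. After absorbing the Galilean center $\xi(t)$ into the unknown, I would work with $\vec v:=P_{\le CK}\vec u$ (in the shifted frame, with $K=K(T)$) and insert it into the identity of Lemma~\ref{MoraEst} with a carefully chosen radial weight $a:\mathbb{R}^2\to\mathbb{R}$.

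The weight $a$ would be a $K^{-1}$-rescaled, smoothly truncated approximation of $|x|$, chosen so that $|\nabla a|\lesssim 1$, the Hessian $(\partial_k\partial_l a)$ is positive semi-definite, $-\Delta a\ge 0$ behaves like a nonnegative bump around the origin, and $-\Delta^2 a\ge 0$ in the distributional sense. With this choice the first three terms on the right-hand side of the identity in Lemma~\ref{MoraEst} are all nonnegative, and the key cubic contribution, combined with the equivalence
\begin{align*}
\sum_{j\in\mathbb{Z}}\sum_{(j_1,j_2,j_3)\in\mathcal{R}(j)}\bar v_j v_{j_1}\bar v_{j_2} v_{j_3}\;\sim\;\|\vec v\|_{l^2}^4
\end{align*}
from \eqref{eq1.5v83} together with the almost-periodic mass concentration \eqref{eq2.10v20}, yields a lower bound
\begin{align*}
\int_0^T M'(t)\,\mathrm{d}t\;\gtrsim\;\int_0^T\!\!\int_{\mathbb{R}^2}\|\vec v(t,x)\|_{l^2}^4\,\mathrm{d}x\,\mathrm{d}t\;\gtrsim\;\int_0^T N(t)^3\,\mathrm{d}t\;=\;K,
\end{align*}
where the last step sums Lemma~\ref{le2.3v20} over the characteristic sub-intervals partitioning $[0,T]$.

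For the upper bound on $\sup_{t\in[0,T]}|M(t)|$, I would use Cauchy--Schwarz, Hardy, and Bernstein: since $|\nabla a|\lesssim 1$ and $\|\vec v\|_{L^2_x l^2}\le m_0$, a crude bound $|M(t)|\lesssim K$ is immediate, and the refined estimate $|M(T)-M(0)|=o(K)$ follows by invoking Lemma~\ref{le2.6v20}, which says that the mass escaping into $\{|\xi-\xi(t)|\ge RN(t)\}$ or $\{|x-x(t)|\ge RN(t)^{-1}\}$ is $o_R(1)$, so pairing the low-frequency piece against the total mass produces the required gain. The Fourier-truncation error generated by replacing $\vec u$ by $\vec v$ inside the nonlinearity is handled by Theorem~\ref{th1.13v20} applied to the odd bounded Lipschitz weight $\nabla a$; the resulting contribution is $o(K)$ and can be absorbed into the main lower bound.

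The principal difficulty is the error coming from the time dependence of the Galilean center $\xi(t)$ and the scale $N(t)$: when $M(t)$ is differentiated or an integration by parts in time is performed, derivatives fall on $\xi(t)$ and $N(t)$, producing terms controlled by $|\xi'(t)|\lesssim N(t)^3$ and $|N'(t)|$. This is exactly where the smoother $\tilde N$ enters: the quantitative bound \eqref{eq5.1v100}, comparing $\int_0^T|\tilde N'(t)|\,\mathrm{d}t$ to $\int_0^T\tilde N(t)\mathcal N(\vec v)(t)\,\mathrm{d}t$ with an arbitrarily small parameter $\delta$, makes these error terms negligible relative to the main lower bound $\gtrsim K$. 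Combining $\int_0^T M'(t)\,\mathrm{d}t\gtrsim K$ with $|M(T)-M(0)|=o(K)$ and letting $T\to\infty$ forces $K=o(K)$, which is impossible unless $K$ stays bounded, contradicting $\int_0^\infty N(t)^3\,\mathrm{d}t=\infty$ and therefore yielding $\vec u\equiv 0$.
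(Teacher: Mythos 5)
There is a genuine gap. The central difficulty specific to the \emph{focusing} sign does not appear anywhere in your argument. In Lemma~\ref{MoraEst}, the nonlinear contribution to $M'(t)$ has coefficient $-\Delta_x a$, multiplying the nonnegative quantity $\sum_{j}\sum_{\mathcal R(j)}\bar u_j u_{j_1}\bar u_{j_2}u_{j_3}\ge 0$. If the Hessian of $a$ is positive semi-definite (which you need to make the kinetic term nonnegative), then $\Delta a=\operatorname{tr}(\nabla^2 a)\ge 0$, so $-\Delta a\le 0$; your simultaneous requirement $-\Delta a\ge 0$ forces $\Delta a\equiv 0$, i.e.\ $a$ affine, which destroys the estimate. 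In reality, for the truncated radial weight used here (or in Dodson's work), $\Delta a\ge 0$, and therefore the cubic term contributes with a \emph{negative} sign to the bound on $\int_0^T M'(t)\,\mathrm dt$ --- opposite to what you claim. Your asserted lower bound $\int_0^T M'(t)\,\mathrm dt\gtrsim\int_0^T\!\int\|\vec v\|_{l^2}^4\gtrsim K$ therefore does not follow from the signed structure of the Morawetz identity alone; it is exactly the kind of statement that is \emph{false} for focusing NLS above the ground state.

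The missing ingredient is the variational step, which is the crux of the paper's argument. After localizing in space at scale $RN(t)^{-1}$ (note: the paper's weight is $N(t)^{-1}$-scaled, not $K^{-1}$-scaled) and performing a \emph{local} Galilean transformation indexed by the translation parameter $s$ as in \eqref{eq5.14v20} to zero out the local momentum, one applies the sharp Gagliardo--Nirenberg inequality of Theorem~\ref{th3.8v32}. The sub-threshold mass hypothesis $\|\vec u_0\|_{L^2_x l^2}<\tfrac1{\sqrt 2}\|Q\|_{L^2}$ then produces a coefficient $\eta>0$ such that the localized kinetic term dominates twice the localized potential term, i.e.
\begin{align*}
\sum_j\int\bigl|\nabla\bigl(\chi\, e^{-ix\xi(s)}Iu_j\bigr)\bigr|^2\,\mathrm dx-\tfrac12\sum_j\int|\chi|^4\sum_{\mathcal R(j)}\bigl(\overline{Iu_j}Iu_{j_1}\overline{Iu_{j_2}}Iu_{j_3}\bigr)\,\mathrm dx\ \ge\ \eta\sum_j\int|\chi|^4\sum_{\mathcal R(j)}\bigl(\overline{Iu_j}Iu_{j_1}\overline{Iu_{j_2}}Iu_{j_3}\bigr)\,\mathrm dx,
\end{align*}
and only this inequality makes the positive lower bound $\int_0^T M'(t)\,\mathrm dt\ge \tfrac{c\eta}2K-o_R(1)K-R^2o(K)$ available. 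You mention Galilean shifting by $\xi(t)$ in a global sense, but not the pointwise-in-$s$ momentum minimization, and you never invoke the ground state threshold at all, so your proposal as written would produce a lower bound with the wrong sign and cannot close.

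Secondary issues: the many error terms (Fourier truncation via Theorem~\ref{th1.13v20}, the $|N'(t)|$ terms controlled by Lemma~\ref{le6.3v20} and \eqref{eq5.1v100}, the $\psi'$ and $\Delta(\psi+\phi)$ tails estimated in \eqref{eq5.25v89new}, \eqref{eq5.30v89}, \eqref{eq5.49v92}) are acknowledged but not actually organized so that they are all $o(K)$ \emph{relative to the positive main term}, which is nontrivial precisely because the main term itself only emerges after the variational step you omitted.
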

\begin{proof}

Let $\varphi$ be a $C_0^\infty$ radial function with
\begin{align*}
\varphi(x) =
\begin{cases}
1,\ |x|\le R- \sqrt{R},\\
0, \ |x|\ge R.
\end{cases}
\end{align*}
Let
\begin{align*}
\phi(x) = \frac1{2\pi R^2} \int_{\mathbb{R}^2} \varphi( |x- s|) \varphi(|s|) \,\mathrm{d}s,
\end{align*}
and define
\begin{align*}
\psi_{ R N(t)^{- 1 } } (r) = \frac1r \int_0^r \phi \left(\frac{ N(t)  s}R\right) \,\mathrm{d}s.
\end{align*}
We have
\begin{align}\label{eq5.1v80}
 \left|\psi_{RN(t)^{- 1}}'(r)  \right| \lesssim \min \left( \left(RN(t)^{- 1 } \right)^{- \frac32},  \left(R N(t)^{- 1 } \right)^{- \frac52} r, r^{-1}  \right).
\end{align}

Define the frequency localized interaction Morawetz action
\begin{align*}
M(t) = \sum\limits_{j,j' \in \mathbb{Z}} \iint_{\mathbb{R}^2 \times \mathbb{R}^2}  \psi_{ R N(t)^{- 1 }} \left(|x-y|\right) N(t) (x-y)  \cdot \Im \left( \overline{ I  u_j(t,x)} \nabla_x I u_j(t,x) \right) \left| I u_{j'} (t,y)\right|^2 \,\mathrm{d}x \mathrm{d}y,
\end{align*}
where $I= P_{\le CK}$.

Notice that $\psi_{ R N(t)^{- 1 }} \left(|x-y| \right) N(t) (x-y) $ satisfies the conditions of Theorem \ref{th1.13v20} with $R$ replaced by $R^2$.  By Lemma \ref{MoraEst} and Theorem \ref{th1.13v20}, we obtain
\begin{align}
& M'(t) = \notag \\
 & - 4 \sum\limits_{j,j'\in \mathbb{Z}} \iint_{\mathbb{R}^2 \times\mathbb{R}^2}  \psi_{ R N(t)^{- 1 }} (|x-y|) N(t) (x-y) \cdot \nabla_x \Re\left( \overline{ \nabla_x Iu_j(t,x)} \otimes \nabla_x Iu_j(t,x) \right) \left|Iu_{j'}(t,y) \right|^2 \,\mathrm{d}x \mathrm{d}y \label{eq5.5v20}\\
 & - 4 \sum\limits_{j,j'\in \mathbb{Z}} \iint_{\mathbb{R}^2 \times \mathbb{R}^2}  \psi_{R N(t)^{-  1 }} (|x-y|) N(t) (x-y) \cdot \Im \left( \overline{Iu_j(t,x)} \nabla_x Iu_j(t,x) \right) \nabla_y \Im \left( \overline{Iu_{j'}(t,y)} \nabla_y Iu_{j'}(t,y)  \right) \,\mathrm{d}x \mathrm{d}y \label{eq5.6v20}\\
 & + \sum\limits_{j,j'\in \mathbb{Z}} \iint_{\mathbb{R}^2 \times \mathbb{R}^2}  \psi_{R N(t)^{-  1 }}  ( |x-y|) N(t) (x-y) \cdot  \nabla_x \Delta_x \left( |Iu_j(t,x)|^2\right) \left|Iu_{j'}(t,y) \right|^2 \,\mathrm{d}x \mathrm{d}y \label{eq5.7v20}\\
 & +  \sum\limits_{j,j'\in \mathbb{Z}} \iint_{\mathbb{R}^2 \times \mathbb{R}^2}  \psi_{ R N(t)^{-  1 }} ( |x-y|) N(t) (x-y) \cdot \nabla_x
 \left( \overline{Iu_j} \vec{F}_j (\overrightarrow{Iu})  \right)  (t,x)  \left|Iu_{j'}(t,y) \right|^2 \,\mathrm{d}x \mathrm{d}y \label{eq5.8v20}\\
 & +  \sum\limits_{j,j'\in \mathbb{Z}} \iint_{\mathbb{R}^2 \times \mathbb{R}^2}  \frac{d}{dt} \left( \psi_{RN(t)^{-  1 } } (|x-y|)N(t)(x-y)\right) \cdot \Im \left( \overline{Iu_j(t,x)} \nabla_x Iu_j(t,x)\right) \left|Iu_{j'} (t,y) \right|^2 \mathrm{d}x \mathrm{d}y  \\
&  + \mathcal{E}(t), \notag
\end{align}
%and
with
\begin{align}\label{eq5.7v89}
\int_0^T \mathcal{E}(t) \,\mathrm{d}t \lesssim R^2 o(K).
\end{align}
In \eqref{eq5.5v20}, $\otimes$ represents
$\sum\limits_{k,k'\in \mathbb{Z}} (x_{k'} - y_{k'}) \partial_k \left( \overline{\partial_k I u_j } \partial_{k'} I u_j \right)$.
Integrating by parts, we have
\begin{align}\label{eq5.9v20}
\eqref{eq5.8v20} &
 =  -  2 \sum\limits_{j,j'\in \mathbb{Z}} \iint_{\mathbb{R}^2 \times \mathbb{R}^2} \psi_{ R N(t)^{- 1 }} (|x-y|)
 N(t) \sum\limits_{\mathcal{R}(j)} \left( \overline{Iu_j} I u_{j_1} \overline{I u_{j_2} } I u_{j_3} \right)(t,x) |Iu_{j'} (t,y)|^2 \,\mathrm{d}x \mathrm{d}y \\
& \ -   \sum\limits_{j,j'\in \mathbb{Z}} \iint_{\mathbb{R}^2 \times \mathbb{R}^2}  \psi_{R N(t)^{- 1 }}'( |x-y|) N(t) |x-y| \sum\limits_{\mathcal{R}(j)} \left( \overline{Iu_j} I u_{j_1} \overline{I u_{j_2} } I u_{j_3} \right)(t,x)  \left|Iu_{j'} (t,y) \right|^2 \,\mathrm{d}x \mathrm{d}y, \notag
\end{align}
and
\begin{align}\label{eq5.10v20}
\eqref{eq5.7v20} = - \sum\limits_{j,j'\in \mathbb{Z}} \iint_{\mathbb{R}^2 \times \mathbb{R}^2}  \Delta \left(   \psi_{ R N(t)^{- 1 }} \left( |x-y|\right) + \phi\left( \frac{N(t)|x-y|}R\right) \right) \left|Iu_j(t,x) \right|^2  \left|Iu_{j'} (t,y) \right|^2 \,\mathrm{d}x \mathrm{d}y.
\end{align}
The gradient vector can be decomposed into a radial component and an angular component.
Now let $\nabla_{r,y}$ be the radial derivative with respect to $y$, i.e. $\nabla_{r,y}  = \frac{x-y }{ | x-y|} \nabla_{x} $ and $\not\nabla_y$ be the angular component of $\nabla$. Then by integrating by parts, we have
\begin{align}
& \eqref{eq5.5v20}+ \eqref{eq5.6v20}  \notag\\
= & \quad  4 \sum\limits_{j,j'\in \mathbb{Z}} \iint_{\mathbb{R}^2 \times \mathbb{R}^2}  \phi\left( \frac{|x-y| N(t) }R\right)
N(t)  \left|\nabla Iu_j(t,x) \right|^2  \left|Iu_{j'} (t,y) \right|^2 \,\mathrm{d}x \mathrm{d}y  \notag \\
& \quad - 4 \sum\limits_{j,j' \in \mathbb{Z}} \iint_{\mathbb{R}^2 \times \mathbb{R}^2}  \phi \left( \frac{ |x-y| N(t) }R\right) N(t)
\Im \left( \overline{Iu_j(t,x)} \nabla_x Iu_j(t,x) \right) \cdot \Im \left( \overline{Iu_{j'} (t,y)} \nabla_y Iu_{j'} (t,y)\right)
 \,\mathrm{d}x \mathrm{d}y \label{eq5.11v20}\\
& \quad + 4 \sum\limits_{j,j'\in \mathbb{Z}} \iint_{\mathbb{R}^2\times \mathbb{R}^2}  \left( \psi_{ R N(t)^{-  1 }}
\left(|x-y|\right) - \phi\left( \frac{|x-y| N(t) }R\right)\right)  N(t)  \left|\not\nabla_y Iu_j\left(t,x\right) \right|^2  \left|Iu_{j'} (t,y) \right|^2
 \,\mathrm{d}x \mathrm{d}y \notag  \\
& \quad - 4 \sum\limits_{j,j'\in \mathbb{Z}} \iint_{\mathbb{R}^2 \times \mathbb{R}^2}  \left( \psi_{ R N(t)^{-  1 }} \left(|x-y|\right) - \phi\left( \frac{|x-y| N(t) }R\right)\right) N(t)  \Im\left( \overline{Iu_j(t,x)} {\not\nabla}_y Iu_j(t,x)\right) \notag \\
& \qquad  \qquad \cdot \Im \left( \overline{Iu_{j'} (t,y)} {\not\nabla}_x  Iu_{j'} (t,y) \right) \,\mathrm{d}x \mathrm{d}y. \label{eq5.12v20}
\end{align}
Because $\psi_R$ and $\phi$ are radial functions, we have $\eqref{eq5.12v20} \ge 0$.

Because \eqref{eq5.11v20} is Galilean invariant, we can take % are free to make
 a Galilean transform to eliminate %that eliminates
  the second term in \eqref{eq5.11v20}.
For any $s \in \mathbb{R}^2$, taking %will choose
 $\xi(s)\in \mathbb{R}^2$ such that
\begin{align}\label{eq5.14v20}
\int_{\mathbb{R}^2}  \varphi\left( \left|\frac{N(t) x}R - s\right|\right) \Im \left( e^{ix \xi(s)} \overline{Iu_j} \nabla \left(e^{-i x \xi(s)} Iu_j \right)\right)(t,x) \,\mathrm{d}x = 0.
\end{align}
Then,
we get
\begin{align}
 & \int_0^T
M'(t)  \,\mathrm{d}t \notag \\
  \ge & \ 4 \int_0^T \sum\limits_{j,j' \in \mathbb{Z}} \iint \frac1{ 2\pi  R^2} \int \varphi \left( \left|\frac{x N(t)}R - s \right| \right)
  \varphi\left( \left|\frac{y N(t)}R - s \right| \right)
 N(t) \notag \\
 & \qquad \cdot  \left| \nabla \left(e^{-i x \xi(s)} Iu_j(t,x) \right)\right|^2 \left|Iu_{j'}(t,y) \right|^2 \,\mathrm{d}x \mathrm{d}y \mathrm{d}t \mathrm{d}s
  \label{eq5.12v86}  \\
& -  \int_0^T \sum\limits_{j,j' \in \mathbb{Z}} \iint \Delta \left( \psi_{R N(t)^{-  1 }} \left( |x- y| \right) + \phi\left( \frac{ |x - y|N(t)}R \right) \right) N(t)   \left|Iu_j(t,x) \right|^2  \left|Iu_{j'}(t,y) \right|^2 \,\mathrm{d}x \mathrm{d}y  \mathrm{d}t
\label{eq5.13v86}   \\
& - 2 \int_0^T
 \sum\limits_{j, j' \in \mathbb{Z}} \iint \psi_{RN(t)^{- 1 }} (|x- y|) N(t) \sum\limits_{\mathcal{R}(j)} \left( \overline{Iu_j} Iu_{j_1}
 \overline{Iu_{j_2}} Iu_{j_3} \right)(t,x)  \left|Iu_{j'} (t,y) \right|^2 \,\mathrm{d}x \mathrm{d}y \mathrm{d}t
 \label{eq5.14v86} \\
& - \int_0^T
 \sum\limits_{j,j' \in \mathbb{Z} } \iint \psi_{RN(t)^{-  1 }}' (|x- y|) N(t) |x- y| \sum\limits_{\mathcal{R}(j)} \left( \overline{Iu_j} Iu_{j_1} \overline{Iu_{j_2}} Iu_{j_3} \right)(t,x)  \left|Iu_{j'}(t,y) \right|^2 \,\mathrm{d}x \mathrm{d}y \mathrm{d}t
 \label{eq5.15v86}  \\
& +  \int_0^T \sum\limits_{j,j' \in \mathbb{Z}} \iint \frac{d}{dt} \left( \psi_{RN(t)^{-  1 }} (|x- y|) N(t)(x- y) \right) \Im \left( \overline{Iu_j(t,x) } \nabla_x Iu_j(t,x) \right)  \left|Iu_{j'}(t,y) \right|^2
\,\mathrm{d}x \mathrm{d}y \mathrm{d}t \label{eq5.17v89}\\
& + \int_0^T \mathcal{E}(t) \,\mathrm{d}t .  \label{eq5.18v89}
\end{align}
We will first consider
%{\it Estimate of the second term. }
%We now turn to
 the estimate of \eqref{eq5.13v86}. We note for $r = |x|$, we have
%\begin{align*}
%\psi_{RN(t)^{- 1 }} (r) = \frac1r \int_0^r \phi \left( \frac{N(t)s}R \right) \,\mathrm{d}s,
%\end{align*}
%then
%\begin{align*}
%r \psi_{RN(t)^{- 1 }} (r) = \int_0^r \phi  \left( \frac{N(t) s}R \right) \,\mathrm{d}s
%\Rightarrow
% \psi_{RN(t)^{-  1 }}(r ) + r \psi_{RN(t)^{-  1 }}' (r) = \phi \left( \frac{N(t)r } R \right).
%\end{align*}
%Thus
\begin{align*}
\psi_{RN(t)^{- 1 }}' (r) = \frac{ \phi\left( \frac{N(t)r}R  \right) - \psi_{RN(t)^{-  1 }} (r)}r,
\end{align*}
%We see
and
\begin{align*}
\psi_{RN(t)^{- 1 }}''(r) %= &  \frac{ \frac{N(t)}R  \phi' \left( \frac{N(t) r}R \right) - \psi_{RN(t)^{- 1 }} ' (r) }r - r^{- 2 } \left( \phi \left( \frac{N(t)r }R \right) - \psi_{RN(t)^{-  1 }} (r) \right)\\
%= &  \frac{ \frac{r}R N(t) \phi' \left( \frac{N(t)r }R \right) - r \psi_{RN(t)^{-  1 }}' (r) - \phi \left( \frac{N(t)r} R \right) + \psi_{RN(t)^{- 1 }} (r) }{r^2} \\
%= &  \frac{ \frac{r}R N(t) \phi' \left( \frac{N(t) r}R \right) - r \psi_{RN(t)^{- 1 }}' (r) }{r^2} - \frac{ \psi_{RN(t)^{- 1 }}' (r)}r
= \frac{ \frac{N(t)}R \phi' \left( \frac{N(t)r}R \right) - 2 \psi_{RN(t)^{- 1 }}' (r) }r.
\end{align*}
Therefore, we have
\begin{align}
& \Delta \left( \psi_{R N(t)^{- 1 } } (|x|) + \phi \left( \frac{ N(t) |x|}R \right)  \right) \notag \\
= &  \psi_{RN(t)^{- 1 }}''(r) + \frac{N(t)^2}{R^2} \phi'' \left( \frac{N(t) r}R \right) +
\frac1r \psi_{RN(t)^{-  1 }}'(r) + \frac1r \frac{N(t)}R \phi'  \left( \frac{N(t) r}R \right)\notag \\
= & \frac1r  \left( \frac{N(t) }R \phi' \left( \frac{ N(t)r} R \right) - 2 \psi_{RN(t)^{- 1 }}' (r) \right) + \frac{N(t)^2}{R^2}
\phi'' \left( \frac{N(t)r }R \right) + \frac1r \psi_{RN(t)^{- 1 }}' ( r) + \frac1r \frac{N(t)}R \phi'  \left( \frac{ N(t)r}R \right). \label{eq5.18v88}
\end{align}
By
\begin{align*}
 \left| \phi' \left( \frac{N(t)r }R  \right) \right | \lesssim \min \left( R^{- \frac32} N(t)^\frac32, R^{- \frac52} N(t)^\frac52 r \right),
\end{align*}
we see
\begin{align}\label{eq5.19v88}
\frac{N(t)}{rR}  \left| \phi' \left( \frac{N(t) r} R \right)  \right| \lesssim \frac{N(t) }{rR} R^{- \frac52} N(t)^\frac52 r \lesssim N(t)^\frac72 R^{- \frac72}.
\end{align}
By $ \left| \phi'' \left( \frac{N(t) r} R \right) \right| \lesssim R^{- \frac52} N(t)^\frac52$, we have
\begin{align} \label{eq5.20v88}
\left| \frac{N(t)^2 }{R^2} \phi'' \left( \frac{N(t)r}R \right) \right | \lesssim \frac{N(t)^2 }{R^2} R^{- \frac52} N(t)^\frac52 \sim R^{- \frac92} N(t)^\frac92.
\end{align}
By
\begin{align}\label{eq5.19v89}
\left|\psi_{RN(t)^{- 1  }}' (r) \right|  \lesssim \min \left( R^{- \frac32} N(t)^\frac32, R^{- \frac52} N(t)^\frac52 r, r^{-1 } \right),
\end{align}
we have
\begin{align}\label{eq5.21v88}
\left| \frac1r \psi_{RN(t)^{-  1  }}' (r) \right| \lesssim \frac1r R^{- \frac52 } N(t)^\frac52 r \sim R^{- \frac52 }N(t)^\frac52.
\end{align}

Therefore, by \eqref{eq5.18v88}, \eqref{eq5.19v88}, \eqref{eq5.20v88}, and \eqref{eq5.21v88}, we have for $R$ large enough,
\begin{align}\label{eq5.22v88}
\left| \Delta \left( \psi_{RN(t)^{- 1 } } (r) + \phi \left( \frac{N(t) r}R \right) \right) \right| \lesssim N(t)^\frac72 R^{- \frac72 } + R^{- \frac92} N(t)^\frac92 + R^{- \frac52 } N(t)^\frac52 \lesssim R^{- \frac52} N(t)^\frac52.
\end{align}
Then, by \eqref{eq5.22v88} and $N(t) \le 1$, we have
\begin{align}
 & \int_0^T   \bigg| - \sum\limits_{j,j' \in \mathbb{Z}} \iint \Delta \left( \psi_{RN(t)^{- 1 } } ( |x - y| ) + \phi \left( \frac{N(t) | x- y| } R \right) \right)  N(t) \left|Iu_j(t,x) \right|^2 \left|Iu_{j'}(t,y) \right|^2 \,\mathrm{d}x \mathrm{d}y \bigg| \mathrm{d}t \notag  \\
\lesssim & \int_0^T R^{- \frac52 } N(t)^\frac52 \int \sum\limits_j \left|Iu_j(t,x)\right|^2 \,\mathrm{d}x N(t) \int  \sum\limits_{j'}   \left|Iu_{j'}(t,y) \right|^2 \,\mathrm{d}y \mathrm{d}t \notag \\
\lesssim & R^{- \frac52} \left\|\vec{u} \right\|_{L_x^2 l^2}^4\int_0^T N(t)^\frac72 \,\mathrm{d}t
\lesssim R^{- \frac52} \left\|\vec{u} \right\|_{L_x^2 l^2}^4 \int_0^T N(t)^3 \,\mathrm{d}t
\sim R^{- \frac52} \left\|\vec{u} \right\|_{L_x^2 l^2}^4 K. \label{eq5.25v89new}
\end{align}
%
%{\it Estimate of the third and fourth terms. }
%We now consider the third and fourth terms, that is

We now turn to the estimate of %\eqref{eq5.14v86} and \eqref{eq5.15v86}. We will first consider
 \eqref{eq5.15v86}. On the characteristic interval $J_k$ of $[0,T]$, we have
\begin{align}
 & \int_{J_k} \bigg|  \iint \psi_{R N(t)^{- 1 }}' \left( |x- y| \right) N(t)  |x- y|
  \sum\limits_{j,j'} \sum\limits_{\mathcal{R}(j)}  \left( \overline{Iu_j} Iu_{j_1} \overline{Iu_{j_2}} Iu_{j_3}  \right)(t,x) \left|Iu_{j'}(t,y) \right|^2
  \,\mathrm{d}x \mathrm{d}y \bigg| \,\mathrm{d}t  \notag \\
\lesssim & \int_{J_k} \bigg( \iint_{|x- x(t)| \ge \frac{ R}{N(t)} } + \iint_{|y - x(t)|\ge \frac{R}{N(t)} } + \iint_{|x- x(t)|\le  \frac{R}{N(t)}
, \atop
|y - x(t)|\le \frac{R}{N(t)} }  \bigg)  \left|\psi_{ R N(t)^{- 1 }}' \left(|x- y| \right)  \right| N(t) |x- y| \notag  \\
& \quad \cdot \sum\limits_{j,j'} \sum\limits_{\mathcal{R}(j)}
 \left(\overline{Iu_j} Iu_{j_1} \overline{Iu_{j_2}} Iu_{j_3}  \right)(t,x)
  \left|Iu_{j'}(t,y) \right|^2 \,\mathrm{d}x \mathrm{d}y \mathrm{d}t. \label{eq5.22v89new}
\end{align}
On the integral domain $ \left\{|x - x(t)|\le \frac{R}{N(t)} , |y - x(t)| \le \frac{R}{N(t)}  \right\}$, by \eqref{eq5.19v89},
we have %we have on the characteristic interval $J_k$ of $[0,T]$,
\begin{align*}
& \int_{J_k} \iint_{ |x - x(t) | \le \frac{R}{N(t)} \atop |y - x(t)| \le \frac{R}{N(t)}}
\sum\limits_{j,j' \in \mathbb{Z}}
\left|\psi_{RN(t)^{- 1 }}'  \left(|x- y| \right) \right| N(t) |x - y|   \sum\limits_{\mathcal{R}(j)}
  \left( \overline{Iu_j} Iu_{j_1} \overline{Iu_{j_2}} I u_{j_3}  \right)(t,x)  \left|Iu_{j'}(t,y) \right|^2 \,\mathrm{d}x \mathrm{d}y \mathrm{d}t
\\
\lesssim  &  \int_{J_k} \iint_{| x- x(t) | \le \frac{R}{N(t)} \atop  |y - x(t)|
 \le \frac{R}{N(t)} } N(t)^\frac52  R^{- \frac32 } |x - y| \sum\limits_{j,j' \in \mathbb{Z}}
 \sum\limits_{\mathcal{R}(j)} \left( \overline{Iu_j} Iu_{j_1} \overline{Iu_{j_2}} Iu_{j_3} \right)(t,x) |Iu_{j'}(t,y)|^2 \,\mathrm{d}x \mathrm{d}y \mathrm{d}t \\
\lesssim & R^{- \frac32} R \int_{J_k} \int \frac{N(t)^\frac52  }{N(t)} \sum\limits_{j,j' \in \mathbb{Z}} \sum\limits_{\mathcal{R}(j)} \left( \overline{Iu_j} Iu_{j_1} \overline{Iu_{j_2}} Iu_{j_3} \right)(t,x)  \left|Iu_{j'}(t,y) \right|^2 \,\mathrm{d}x \mathrm{d}y \mathrm{d}t \\
\lesssim & R^{- \frac12} \int_{J_k}  \sum\limits_{j' \in \mathbb{Z}} \int \sum\limits_{j \in \mathbb{Z}} \sum\limits_{\mathcal{R}(j)} \left( \overline{Iu_j} Iu_{j_1} \overline{Iu_{j_2}} Iu_{j_3} \right)(t,x) N(t)^\frac32  \,\mathrm{d}x \mathrm{d}t \\
\lesssim & R^{- \frac12} \|\vec{u}\|_{L_y^2 l^2}^2 \int_{J_k} \int \sum\limits_{j \in \mathbb{Z}} \sum\limits_{\mathcal{R}(j)} \left( \overline{Iu_j} Iu_{j_1} \overline{Iu_{j_2}} Iu_{j_3} \right)(t,x) N(t)^\frac32
 \,\mathrm{d}x \mathrm{d}t .
\end{align*}
Taking the summation of the characteristic interval $J_k$ in the above estimate, we have by $N(t) \le 1$ and Lemma \ref{le2.3v20},
\begin{align}
& \int_0^T \bigg| - \sum\limits_{j,j' \in \mathbb{Z}} \iint_{ |x - x(t) | \le \frac{ R}{N(t)}  \atop |y - x(t)| \le  \frac{R}{N(t)} }
 \psi_{RN(t)^{- 1 }}' ( |x - y|) N(t) | x - y| \sum\limits_{\mathcal{R}(j)}  \left( \overline{Iu_j} Iu_{j_1} \overline{Iu_{j_2}} Iu_{j_3} \right)(t,x) \left|Iu_{j'}(t,y) \right|^2 \,\mathrm{d}x \mathrm{d}y \bigg| \mathrm{d}t \notag  \\
= & \sum\limits_{J_k \subseteq [0, T]} \int_{J_k} \bigg| - \sum\limits_{j,j' \in \mathbb{Z}} \iint_{|x - x(t) | \le \frac{ R}{N(t)}   \atop  |y - x(t) | \le  \frac{R}{N(t)}  }
\psi_{RN(t)^{- 1 }}' ( | x  - y |) N(t) |x - y| \notag  \\
& \quad \cdot
 \sum\limits_{\mathcal{R}(j)} \left( \overline{Iu_j} Iu_{j_1} \overline{Iu_{j_2}} I u_{j_3} \right)(t,x) \left|Iu_{j'}(t,y) \right|^2 \,\mathrm{d}x \mathrm{d}y \bigg| \mathrm{d}t \notag \\
\lesssim & \sum\limits_{J_k \subseteq [ 0, T]} R^{- \frac12}  \left\|\vec{u}\right\|_{L_y^2 l^2}^2 \int_{J_k} \int \sum\limits_{j \in \mathbb{Z}} \sum\limits_{\mathcal{R}(j)}  \left( \overline{Iu_j} I u_{j_1} \overline{Iu_{j_2}} Iu_{j_3}  \right)(t,x) N(t)^\frac32
 \,\mathrm{d}x \mathrm{d}t \notag  \\
\lesssim & %\sum\limits_{J_k \subseteq [0,T]} R^{- \frac12}  \left\|\vec{u} \right\|_{L_y^2 l^2}^2 N(J_k %t_k
%)^\frac32
%\lesssim
R^{- \frac12}  \left\|\vec{u} \right\|_{L_y^2 l^2}^2 \sum\limits_{J_k \subseteq [0,T]} N(J_k)%t_k)
\lesssim   R^{- \frac12}  \left\|\vec{u} \right\|_{L_y^2 l^2}^2 \sum\limits_{J_k \subseteq [0,T]} \int_{J_k} N(t)^3 \,\mathrm{d}t
\sim R^{- \frac12}  \left\| \vec{u} \right\|_{L_y^2 l^2}^2 K.\label{eq5.22v89}
\end{align}

We now consider
\begin{align*}
 \int_{J_k}     \iint_{|x- x(t)| \ge \frac{  R}{N(t)}}     \left|\psi_{ R N(t)^{- 1 }} '( |x- y|)  \right| N(t)
  |x- y| \cdot \sum\limits_{j,j'} \sum\limits_{\mathcal{R}(j)}
   \left( \overline{Iu_j} Iu_{j_1} \overline{Iu_{j_2}} Iu_{j_3} \right)(t,x)
    \left|Iu_{j'}(t,y) \right|^2 \,\mathrm{d}x \mathrm{d}y \mathrm{d}t.
\end{align*}
in \eqref{eq5.22v89new}.
%We will use
By the fact $ \left|\psi_{RN(t)^{- 1 }}' (r) \right| \lesssim r^{- 1}$, we have %and see
\begin{align}\label{eq5.30v102}
 \left|\psi_{RN(t)^{- 1 }}' ( |x - y|)  \right|  N(t) |x - y| \lesssim |x - y|^{-1} N(t) |x - y| \sim N(t),
\end{align}
then by \eqref{eq5.30v102} and Lemma \ref{le2.6v20}, we have
\begin{align*}
&  \int_{J_k} \iint_{|x - x(t) | \ge \frac{R}{N(t)} } \sum\limits_{j,j' \in \mathbb{Z}} \psi_{RN(t)^{- 1 }}' (|x - y|) N(t) | x - y|
\sum\limits_{\mathcal{R}(j)} \left( \overline{Iu_j} Iu_{j_1} \overline{Iu_{j_2}} Iu_{j_3}  \right)(t,x)  \left|Iu_{j'}(t,y) \right|^2 \,\mathrm{d}x \mathrm{d} y \mathrm{d}t \\
\lesssim  & \int_{J_k} \iint_{|x - x(t)| \ge \frac{R}{N(t)} } \sum\limits_{j,j' \in \mathbb{Z}} N(t) \sum\limits_{\mathcal{R}(j)}
\left( \overline{Iu_j} Iu_{j_1} \overline{Iu_{j_2}} Iu_{j_3}  \right)(t,x)   \left|Iu_{j'}(t,y) \right|^2 \,\mathrm{d}x \mathrm{d}y \mathrm{d}t \\
\lesssim & \int_{J_k} \sum\limits_{j' \in \mathbb{Z}} \int   \left|Iu_{j'}(t,y) \right|^2 \,\mathrm{d}y \cdot
 \int_{|x - x(t)| \ge \frac{R}{N(t)} } N(t) \sum\limits_{j \in \mathbb{Z}} \sum\limits_{\mathcal{R}(j)}
  \left( \overline{Iu_j} Iu_{j_1} \overline{Iu_{j_2}} Iu_{j_3}  \right) (t,x) \,\mathrm{d}x \mathrm{d}t \\
\lesssim & \|\vec{u}\|_{L_y^2 l^2}^2 \int_{J_k}
\int_{|x - x(t)| \ge \frac{R}{N(t)}} N(t) \sum\limits_{j \in \mathbb{Z}} \sum\limits_{\mathcal{R}(j)}  \left( \bar{u}_j u_{j_1} \bar{u}_{j_2} u_{j_3} \right)(t,x) \,\mathrm{d}x \mathrm{d}t \\
& +  \left\|\vec{u} \right\|_{L_y^2 l^2}^2  \left( \int_{J_k} \int \sum\limits_{j \in \mathbb{Z}}
\left| P_{\ge CK} u_j(t,x) \right|^4 \,\mathrm{d}x \mathrm{d}t  \right)^\frac14  \left\|\vec{u} \right\|_{L_{t,y}^4 l^4(J_k )}^3 N(J_k %t_k
) \\
\lesssim &  \left\|\vec{u} \right\|_{L_y^2 l^2}^2  \int_{J_k} \int_{|x - x(t) | \ge \frac{R}{N(t)} } N(t) \sum\limits_{j \in \mathbb{Z}} \sum\limits_{\mathcal{R}(j)}  \left( \bar{u}_j u_{j_1} \bar{u}_{j_2} u_{j_3}  \right)(t,x) \,\mathrm{d}x \mathrm{d}t
+  \left\|\vec{u} \right\|_{L_y^2 l^2}^2  \left\|P_{\ge CK} \vec{u} \right\|_{L_{t,x}^4 l^2}  \left\|\vec{u}  \right\|_{L_{t,x}^4 l^2}^3 N(J_k %t_k
) \\
\lesssim &  \left\|\vec{u} \right\|_{L_y^2 l^2}^2 N(J_k %t_k
)  \int_{J_k}  \int_{|x - x(t)| \ge\frac{R}{N(t)}} \sum\limits_{j \in \mathbb{Z}} \sum\limits_{\mathcal{R}(j)}  \left( \bar{u}_j u_{j_1} \bar{u}_{j_2} u_{j_3} \right)(t,x) \,\mathrm{d}x \mathrm{d}t
+  \left\|\vec{u} \right\|_{L_y^2 l^2}^2  \left\|P_{ | \xi - \xi(t) | \ge { C(R) K} N(t) }  \vec{u} \right\|_{L_{t,x}^4 l^2 (J_k)} N(J_k %t_k
) \\
\le &  o_R(1) N(J_k %t_k
).
\end{align*}

Thus, taking summation of the characteristic interval $J_k$ of $[0,T]$, by Lemma \ref{le2.3v20}, we have
\begin{align}
 &  \int_0^T   \bigg|- \sum\limits_{j,j' \in \mathbb{Z}} \iint_{|x - x(t) | \ge \frac{R}{N(t)} } \psi_{RN(t)^{- 1 }}' ( | x- y|) N(t) | x - y| \sum\limits_{\mathcal{R}(j)}  \left( \overline{Iu_j} Iu_{j_1} \overline{Iu_{j_2}} Iu_{j_3} \right)(t,x) \left|Iu_{j'}(t,y) \right|^2 \,\mathrm{d}x \mathrm{d}y  \bigg| \mathrm{d}t \notag  \\
= & \sum\limits_{J_k \subseteq [0,T]} \int_{J_k} \bigg| - \sum\limits_{j,j' \in \mathbb{Z}} \iint \psi_{RN(t)^{-  1 }}' ( | x- y|) N(t) | x- y|
\sum\limits_{\mathcal{R}(j)}  \left( \overline{Iu_j} Iu_{j_1} \overline{Iu_{j_2}} Iu_{j_3} \right)(t,x) \left|Iu_{j'}(t,y) \right|^2 \,\mathrm{d}x \mathrm{d}y \bigg| \mathrm{d}t  \notag \\
\lesssim & o_R (1)\sum\limits_{J_k \subseteq [0, T]}  N(J_k %t_k
)
\sim o_R(1) \sum\limits_{J_k \subseteq [0,T]} \int_{J_k} N(t)^3 \,\mathrm{d}t \sim o_R(1)K. \label{eq5.23v89}
\end{align}
%Finally, we consider
%\begin{align*}
%& \int_0^T  \bigg| - \sum\limits_{j,j' \in \mathbb{Z}} \iint_{| y  - x(t) | \ge \frac{R}{N(t)} }
%\psi_{RN(t)^{- 1 }}' ( | x- y |) N(t) |x - y| \sum\limits_{\mathcal{R}(j)}
%\left( \overline{Iu_j} Iu_{j_1} \overline{Iu_{j_2}} Iu_{j_3} \right)(t,x)  \left|Iu_{j'}(t,y) \right|^2 \,\mathrm{d}x \mathrm{d} y \bigg| \mathrm{d}t.
%\end{align*}

In the meantime, on the characteristic interval $J_k$, by \eqref{eq5.30v102}, \eqref{eq2.10v20}, and Lemma \ref{le2.6v20}, we have
\begin{align*}
& \int_{J_k} \iint_{|y - x(t)|  \ge \frac{R}{N(t)} }  \left|\psi_{ R N(t)^{- 1 }} '( |x - y|)  \right| N(t)  | x- y| \sum\limits_{j,j'} \sum\limits_{\mathcal{R}(j)}  \left( \overline{Iu_j} Iu_{j_1} \overline{Iu_{j_2}} Iu_{j_3}  \right)(t,x) \left|Iu_{j'}(t,y) \right|^2 \,\mathrm{d}x \mathrm{d}y \mathrm{d}t \\
\lesssim & \int_{J_k}  N(t) \iint_{|y - x(t) | \ge \frac{R}{N(t)} } \sum\limits_{j,j'} \left( \overline{Iu_j} Iu_{j_1} \overline{Iu_{j_2}} Iu_{j_3} \right)(t,x) \left|Iu_{j'}(t,y) \right|^2 \,\mathrm{d}x \mathrm{d}y \mathrm{d}t \\
\lesssim &  \int_{J_k}    \int \sum\limits_j \sum\limits_{\mathcal{R}(j)}  \left( \overline{Iu_j} Iu_{j_1} \overline{Iu_{j_2}} Iu_{j_3} \right)(t,x) \,\mathrm{d}x \mathrm{d}t  \cdot \sup\limits_{t \in J_k} N(t) \int_{|y - x(t)| \ge \frac{R}{N(t)} } \sum\limits_{j'}  \left|Iu_{j'} (t,y) \right|^2 \,\mathrm{d}y \\
\lesssim & \sup\limits_{t \in J_k}  N(t) \int_{|y - x(t)| \ge  \frac{R}{N(t)} } \sum\limits_{j' \in \mathbb{Z}}  \left|P_{\le CK} u_{j'}(t,y) \right|^2 \,\mathrm{d}y \\
\lesssim & \sup\limits_{t \in J_k}  N(t) \bigg( \int_{|y - x(t)| \ge  \frac{R}{N(t)} } \sum\limits_{j'}  \left|u_{j'}(t,y) \right|^2 \,\mathrm{d}y + \int \sum\limits_{j' \in \mathbb{Z}}  \left|P_{\ge CK} u_{j'} (t,y) \right|^2 \,\mathrm{d}y  \bigg)
\\
\lesssim & \sup\limits_{t \in J_k} N(t)  \int_{ |y - x(t)| \ge  \frac{ R}{N(t)} } \sum\limits_{j' \in \mathbb{Z}}  \left|u_{j'}(t,y) \right|^2 \,\mathrm{d}y + \sup\limits_{t \in J_k} N(t)  \int \sum\limits_{j' \in \mathbb{Z}}  \left|P_{ |\xi - \xi(t)| \ge C(R) K N(t) } u_{j'}(t,y) \right|^2 \,\mathrm{d}y
\le     o_R(1) N(J_k).
\end{align*}
Taking the summation over the characteristic interval $J_k$ of $[0,T]$, by Lemma \ref{le2.3v20}, we have
%Then
\begin{align}
& \int_0^T \iint_{|y - x(t)| \ge  \frac{R}{N(t)} }  \left|\psi_{ R N(t)^{- 1 }} '( |x- y|)  \right| N(t)  |x-y| \sum\limits_{j,j' \in \mathbb{Z}} \sum\limits_{\mathcal{R}(j)}  \left( \overline{Iu_j} Iu_{j_1} \overline{Iu_{j_2}} Iu_{j_3}  \right)(t,x)   \left|Iu_{j'}(t,y) \right|^2 \,\mathrm{d}x \mathrm{d}y \mathrm{d}t \notag \\
= & \sum\limits_{J_k \subseteq [0,T]} \int_{J_k} \iint_{|y - x(t)| \ge  \frac{ R}{N(t)} }  \left|\psi_{R N(t)^{-  1 }} '(|x-y|)  \right| N(t)
 |x - y| \notag \\
 & \quad \cdot
 \sum\limits_{j,j' \in \mathbb{Z}} \sum\limits_{\mathcal{R}(j)}  \left( \overline{Iu_j} Iu_{j_1} \overline{Iu_{j_2}} Iu_{j_3}  \right)(t,x)
 \left|Iu_{j'}(t,y) \right|^2 \,\mathrm{d}x \mathrm{d}y \mathrm{d}t  \notag \\
\lesssim & o_R(1) \sum\limits_{J_k \subseteq [0,T]} N(J_k)
\sim  o_R(1) \sum\limits_{J_k \subseteq [0,T]} \int_{J_k} N(t)^3 \,\mathrm{d}t
\sim o_R(1) \int_0^T N(t)^3 \,\mathrm{d}t = o_R(1) K. \label{eq5.25v89}
\end{align}

Thus, by \eqref{eq5.22v89}, \eqref{eq5.23v89}, and \eqref{eq5.25v89}, we have
\begin{align}
&  \int_0^T  \bigg| - \sum\limits_{j,j' \in \mathbb{Z}} \iint \psi_{RN(t)^{- 1 }}' ( |x - y|) N(t) |x - y| \sum\limits_{\mathcal{R}(j)}  \left( \overline{Iu_j} Iu_{j_1} \overline{Iu_{j_2}} Iu_{j_3} \right)(t,x) |Iu_{j'}(t,y)|^2 \,\mathrm{d}x \mathrm{d}y \bigg| \mathrm{d}t  \notag \\
\lesssim  &  R^{- \frac12 }  \left\|\vec{u} \right\|_{L_y^2 l^2 }^2 K + o_R(1) K,  \label{eq5.30v89}
\end{align}
and therefore this completes the estimate of \eqref{eq5.15v86}.

 We now turn to the estimate of \eqref{eq5.14v86}. By simple calculation, we see
\begin{align}
& - 2 \int_0^T \sum\limits_{j,j' \in \mathbb{Z}} \iint \psi_{ R N(t)^{- 1
} }  ( |x- y|) N(t) \sum\limits_{\mathcal{R}(j)} \left( \overline{Iu_j} Iu_{j_1} \overline{Iu_{j_2}}
 Iu_{j_3}  \right)(t,x) \left|Iu_{j'} (t,y) \right|^2 \,\mathrm{d}x \mathrm{d}y \mathrm{d}t  \notag \\
= & - 2  \int_0^T \sum\limits_{j,j' \in \mathbb{Z}} \iint \left( \psi_{ R N(t)^{- 1 } } (|x- y|)
  - \phi\left( \frac{ |x-y|}R \right) \right) N(t) %\right)
 \sum\limits_{\mathcal{R}(j)} \left( \overline{Iu_j}
Iu_{j_1} \overline{Iu_{j_2}} Iu_{j_3} \right)(t,x) \left|Iu_{j'}(t,y)\right|^2 \,\mathrm{d}x \mathrm{d}y  \mathrm{d}t \notag  \\
& - 2  \int_0^T \sum\limits_{j,j' \in \mathbb{Z}} \iint \phi \left( \frac{ N(t) |x - y|}R \right) N(t)
\sum\limits_{\mathcal{R}(j)}  \left( \overline{Iu_j} Iu_{j_1} \overline{Iu_{j_2}} Iu_{j_3} \right)(t,x) \left|Iu_{j'}(t,y) \right|^2 \,\mathrm{d}x \mathrm{d}y
\mathrm{d}t \notag \\
= & 2  \int_0^T \sum\limits_{j,j' \in \mathbb{Z}} \iint |x- y|
\psi_{ R N(t)^{- 1} }' ( |x - y|) N(t)
\sum\limits_{\mathcal{R}(j)} \left( \overline{Iu_j} Iu_{j_1} \overline{Iu_{j_2}} Iu_{j_3} \right)(t,x) \left|Iu_{j'} (t,y) \right|^2 \,\mathrm{d}x \mathrm{d}y
\mathrm{d}t   \label{eq5.16v86} \\
& - 2 \int_0^T \iint \phi \left( \frac{ N(t)  |x - y|}R \right) N(t)  \sum\limits_{j,j' \in \mathbb{Z}} \sum\limits_{\mathcal{R}(j)} \left( \overline{Iu_j} Iu_{j_1} \overline{Iu_{j_2}} Iu_{j_3} \right)(t,x) \left|Iu_{j'}(t,y) \right|^2 \,\mathrm{d}x \mathrm{d}y \mathrm{d}t , \label{eq5.17v86}
\end{align}
we see the term \eqref{eq5.16v86} has been estimated previously in \eqref{eq5.15v86}, while the estimate of the term  \eqref{eq5.17v86} will be incorporated into the estimate of the term \eqref{eq5.12v86} in the following.

%{\it Estimate of the first term. }
We now turn to % the first term
\eqref{eq5.12v86}.
We will estimate this term together with the remainder \eqref{eq5.17v86} in the estimate of \eqref{eq5.14v86}. We consider
\begin{align*}
& 4 \sum\limits_{j,j' \in \mathbb{Z}} \iint \frac1{ 2\pi  R^2} \int \varphi \left( \left|\frac{x N(t) }R - s \right| \right) \varphi
 \left( \left|\frac{y N(t) }R - s \right| \right)  N(t)
 \left|\nabla  \left( e^{- i x \xi(s)} Iu_j(t,x) \right) \right|^2  \left|Iu_{j'}(t,y) \right|^2 \,\mathrm{d}x \mathrm{d}y  \mathrm{d}s \\
& - 2 \sum\limits_{j,j' \in \mathbb{Z}} \iint \phi \left( \frac{ N(t) |x- y|}R  \right)  N(t) \sum\limits_{\mathcal{R}(j)}  \left( \overline{Iu_j} Iu_{j_1} \overline{Iu_{j_2}} Iu_{j_3}  \right)(t,x)  \left|Iu_{j'}(t,y) \right|^2 \,\mathrm{d}x \mathrm{d}y.
\end{align*}
Let $\chi \in C_0^\infty$ such that
\begin{align*}
\chi(x) =
\begin{cases}
1, |x| \le R - 2 \sqrt{R},\\
0, |x|  \ge R - \sqrt{R},
\end{cases}
\end{align*}
we have $|\chi''(x)|\lesssim \frac1R$, $|\chi'(x)|\lesssim \frac1{\sqrt{R}}$, and $\varphi(x) \ge \chi^2 (x)$.
Thus, we have
\begin{align}
& \sum\limits_{j \in \mathbb{Z}} \int \chi^2 \left( \left|\frac{x N(t) }R - s \right| \right) \left|\nabla \left(e^{- i x \xi(s)} Iu_j(t,x) \right) \right|^2 \,\mathrm{d}x  \notag \\
= & \sum\limits_{j \in \mathbb{Z}} \int  \left|\nabla \left( \chi \left( \left|\frac{x N(t) }R - s \right| \right) e^{- i x\xi(s)} Iu_j(t,x) \right)\right|^2 \,\mathrm{d}x \notag\\
& \quad +  \frac{N(t)^2} {R^2} \sum\limits_{j \in \mathbb{Z}} \int \chi \left( \left|\frac{x N(t) }R - s \right| \right)  \left( \Delta \chi \right) \left( \left|\frac{x N(t) }R - s \right| \right) \left|Iu_j(t,x) \right|^2 \,\mathrm{d}x \notag  \\
\ge & \sum\limits_{j \in \mathbb{Z}} \int  \left|\nabla \left(\chi \left( \left|\frac{x N(t) }R - s \right| \right) e^{-i x \xi(s)} Iu_j(t,x) \right)\right|^2 \,\mathrm{d}x - \frac{c N(t)^2 }{R^2}  \left\|\vec{u} \right\|_{L_x^2 l^2}^2.
\label{eq5.18v87}
 \end{align}
We now consider
\begin{align*}
&  \frac2{ 2\pi  R^2} \int_0^T \sum\limits_{j,j' \in \mathbb{Z}} \iiint  \left( \left|\chi \left( \left|\frac{x N(t) }R - s \right| \right) \right|^4 - \varphi \left( \left|\frac{x N(t) }R - s \right| \right) \right) \varphi \left( \left|\frac{ y N(t) }R - s \right| \right) N(t) \,\mathrm{d}s \\
&  \quad \cdot  \sum\limits_{\mathcal{R}(j)} \left( \overline{Iu_j}Iu_{j_1} \overline{Iu_{j_2}} Iu_{j_3}  \right)(t,x) \left|Iu_{j'}(t,y) \right|^2
\,\mathrm{d}x \mathrm{d}y \mathrm{d}t. 
\end{align*}
%We see for
Direct calculation shows for $x,y$ satisfy $|x-y| \le \frac{R^2}{4N(t)} $, 
\begin{align}\label{eq5.19v20}
\frac1{2 \pi  R^2} \int_{\mathbb{R}^2}  \left( \varphi\left( \left|\frac{N(t) x}R - s\right|\right) - \chi^4 \left( \left|\frac{N(t) x}R - s\right|\right) \right) \varphi\left( \left|\frac{N(t) y}R - s\right|\right) \,\mathrm{d}s \lesssim \frac1{\sqrt{R}}, 
\end{align}
then by \eqref{eq5.19v20} and  Lemma \ref{le2.3v20}, we have 
\begin{align}
& \int_0^T \frac2{ 2\pi  R^2} \sum\limits_{j,j' \in \mathbb{Z}} \iiint_{|x - y| \le \frac{R^2}{ 4 N(t)} }
 \left(  \left|\chi \left( \left|\frac{x N(t) }R - s \right| \right) \right|^4 -
 \varphi
 \left( \left|\frac{x N(t) }R - s \right| \right)  \right) \varphi \left( \left|\frac{y N(t) }R - s \right| \right) N(t)  \,\mathrm{d}s  \notag \\
& \quad \cdot \sum\limits_{\mathcal{R}(j)} \left( \overline{Iu_j} Iu_{j_1} \overline{Iu_{j_2}}
Iu_{j_3} \right)(t,x) \left|Iu_{j'}(t,y) \right|^2 \,\mathrm{d}x \mathrm{d}y \mathrm{d}t \notag \\
\lesssim & \frac1{\sqrt{R}} \sum\limits_{j,j' \in \mathbb{Z}} \sum\limits_{J_k \subseteq [0, T]}  N(J_k)
\int_{J_k}  \iint \sum\limits_{\mathcal{R}(j)} \left( \overline{Iu_j} Iu_{j_1}
 \overline{Iu_{j_2}} Iu_{j_3}  \right)(t,x)
 \left|Iu_{j'} (t,y) \right|^2 \,\mathrm{d}x \mathrm{d}y \mathrm{d}t \notag\\
\lesssim & \frac1{\sqrt{R}} \|\vec{u}\|_{L_y^2 l^2}^2 \int_0^T N(t)^3 \,\mathrm{d}t
\lesssim \frac1{\sqrt{R}} \|\vec{u}\|_{L_x^2 l^2}^2 K .  \label{eq5.35v91}
\end{align}

We now consider when $|x- y | \ge \frac{R^2}{ 4 N(t)} $, then
\begin{align}
&\int_0^T \frac2{ 2\pi  R^2} \sum\limits_{j,j' \in \mathbb{Z}} \iiint_{|x - y| \ge \frac{R^2}{ 4 N(t)} }
\left( \left|\chi \left( \left|\frac{x N(t) }R- s \right| \right) \right|^4 - \varphi \left( \left|\frac{x N(t) }R- s \right| \right) \right) \varphi \left( \left|\frac{y N(t) }R- s \right| \right) N(t)  \,\mathrm{d}s \notag \\
& \quad \cdot \sum\limits_{\mathcal{R}(j)} \left( \overline{Iu_j} Iu_{j_1}
\overline{Iu_{j_2}} Iu_{j_3} \right)(t,x) \left|Iu_{j'}(t,y) \right|^2 \,\mathrm{d}x \mathrm{d}y \mathrm{d}t \notag \\
\lesssim & \int_0^T \sum\limits_{j,j' \in \mathbb{Z}} \iint_{|x- y| \ge \frac{R^2}{ 4 N(t)} } \sum\limits_{\mathcal{R}(j)}
\left( \overline{Iu_j} Iu_{j_1} \overline{Iu_{j_2}} Iu_{j_3} \right)(t,x) \left|Iu_{j'}(t,y) \right|^2 \,\mathrm{d}x \mathrm{d}y \mathrm{d}t
\lesssim \frac{K}{\sqrt{R}} + o_R(1)K. \label{eq5.36v91}
\end{align}
Therefore, by \eqref{eq5.35v91} and \eqref{eq5.36v91}, we have
\begin{align}
&  \frac2{ 2\pi  R^2} \int_0^T \sum\limits_{j,j' \in \mathbb{Z}} \iiint  \left( \left|\chi \left( \left|\frac{x N(t) }R - s \right| \right) \right|^4 - \varphi \left( \left|\frac{x N(t) }R - s \right| \right) \right) \varphi \left( \left|\frac{y N(t) }R - s \right| \right) N(t) \,\mathrm{d}s   \notag   \\
& \quad \cdot  \sum\limits_{\mathcal{R}(j)} \left( \overline{Iu_j}Iu_{j_1} \overline{Iu_{j_2}} Iu_{j_3}  \right)(t,x) \left|Iu_{j'}(t,y) \right|^2
\,\mathrm{d}x \mathrm{d}y \mathrm{d}t
\lesssim o_R(1)K. \label{eq5.35v91}
\end{align}
Therefore, by \eqref{eq5.18v87} and \eqref{eq5.35v91}, we have
\begin{align*}
 & 4  \sum\limits_{j,j' \in \mathbb{Z}} \int_0^T \iint \frac1{ 2 \pi R^2} \int \varphi \left( \left |\frac{x N(t) }R - s \right| \right) \varphi \left( \left |\frac{y N(t) }R - s \right| \right)  N(t) \\
  & \qquad \cdot \left|\nabla \left(e^{-i x \xi(s)} Iu_j(t,x) \right) \right|^2 \left|Iu_{j'}(t,y) \right|^2 \, \mathrm{d}s\mathrm{d}x \mathrm{d}y \mathrm{d}t \\
 & - 2 \sum\limits_{j,j' \in \mathbb{Z}}  \int_0^T \iint \phi \left( \frac{N(t) |x- y|}R \right) N(t)
 \sum\limits_{\mathcal{R}(j)}
 \left( \overline{Iu_j} Iu_{j_1} \overline{Iu_{j_2}} Iu_{j_3} \right)(t,x) \left|Iu_{j'}(t,y) \right|^2 \,\mathrm{d}x \mathrm{d}y  \mathrm{d}t \\
 \ge & \frac2{ \pi  R^2} \sum\limits_{j,j' \in \mathbb{Z}} \int_0^T \iiint \left|\nabla \left( \chi
 \left( \left|\frac{x N(t) }R - s \right| \right) e^{-i x \xi(s)} Iu_j(t,x) \right) \right|^2 \,\mathrm{d}x \cdot
\varphi \left( \left|\frac{y N(t) }R- s \right| \right) \left|Iu_{j'}(t,y) \right|^2 N(t) \,\mathrm{d}y \mathrm{d}s \mathrm{d}t  \\
 - &   \frac{1}{\pi R^2}  \sum\limits_{j,j' \in \mathbb{Z}}   %\phi( \frac{ |x - y|}R )
\int_0^T \iiint  \left|\chi \left( \left|\frac{x N(t) }R - s \right| \right) \right|^4   \varphi \left( \left|\frac{y N(t) }R - s \right| \right) N(t)  \,\mathrm{d}s \\
& \qquad \cdot
 \sum\limits_{\mathcal{R}(j)}  \left( \overline{Iu_j} Iu_{j_1} \overline{Iu_{j_2}} Iu_{j_3}  \right)(t,x) \left|Iu_{j'}(t,y) \right|^2  \,\mathrm{d}x \mathrm{d}y \mathrm{d}t \\
& - C \frac{K}{   R^4}   \left\|\vec{u} \right\|_{L_x^2 l^2}^4 - o_R(1) K .
\end{align*}
Then by Theorem \ref{th3.8v32} and the fact $\left\|\vec{u}(0) \right\|_{L^2_x l^2 } < \frac1{\sqrt2} \|Q\|_{L^2}$, one can find $\eta = \eta\left( \left\|\vec{u}(0) \right\|_{L^2_x l^2}\right) > 0$ such that
\begin{align*}
& \sum\limits_{j \in \mathbb{Z}} \int  \left|\nabla  \left(
 \chi \left(  \left|\frac{x N(t) }R - s \right| \right) e^{-i x \xi(s)} Iu_j(t,x)
\right)
\right|^2 \,\mathrm{d}x \\
& \quad
- \frac12 \sum\limits_{j \in \mathbb{Z}} \int  \left|\chi \left(  \left|\frac{x N(t) }R - s \right| \right)
\right|^4 \sum\limits_{\mathcal{R}(j)}
\left( \overline{Iu_j} Iu_{j_1} \overline{Iu_{j_2}} Iu_{j_3}  \right)(t,x) \,\mathrm{d}x \\
\ge & \frac12  \Bigg( \frac{ \sqrt{\frac12} \|Q\|_{L^2} }{ \Big( \sum\limits_{j \in \mathbb{Z}}
 \left\|\chi  \left( \left|\frac{x N(t) }R - s \right|  \right) e^{-i x \xi(s)} Iu_j(t,x) \right\|_{L_x^2}^2 \Big)^\frac12 } \Bigg)^2
\cdot \sum\limits_{j \in \mathbb{Z}} \int  \left|\chi \left( \left|\frac{x N(t) }R - s \right| \right) \right|^4
\sum\limits_{\mathcal{R}(j)} \left( \overline{Iu_j} Iu_{j_1} \overline{Iu_{j_2}} Iu_{j_3}  \right)(t,x) \,\mathrm{d}x \\
& - \frac12 \sum\limits_{j \in \mathbb{Z}} \int  \left|\chi \left( \left|\frac{x N(t) }R - s \right| \right) \right|^4
\sum\limits_{\mathcal{R}(j)} \left( \overline{Iu_j} Iu_{j_1} \overline{Iu_{j_2}} Iu_{j_3}  \right)(t,x) \,\mathrm{d}x \\
\ge & \eta \sum\limits_{j \in \mathbb{Z}} \int  \left|\chi \left( \left|\frac{x N(t) }R - s \right| \right) \right|^4
\sum\limits_{\mathcal{R}(j)} \left( \overline{Iu_j} Iu_{j_1} \overline{Iu_{j_2}} Iu_{j_3}  \right)(t,x) \,\mathrm{d}x.
\end{align*}
By
\begin{align*}
\frac1{ 2\pi  R^2} \int \chi^4  \left( \left|\frac{x N(t) }R - s \right| \right) \varphi \left( \left|\frac{y N(t) }R - s \right| \right) \,\mathrm{d}s \ge \frac14,
\end{align*}
we have
\begin{align}
&\frac4{ 2\pi  R^2} \sum\limits_{j,j' \in \mathbb{Z}} %\eta
 \iint  \left|\chi \left( \left|\frac{x N(t) }R - s \right| \right) \right|^4 \sum\limits_{\mathcal{R}(j)}
 \left( \overline{Iu_j} Iu_{j_1} \overline{Iu_{j_2}} Iu_{j_3} \right)(t,x) \,\mathrm{d}x \notag  \\
 & \qquad \cdot
 \int \varphi \left( \left|\frac{y N(t)  }R - s \right|  \right)  N(t) \left|Iu_{j'}(t,y) \right|^2 \,\mathrm{d}y \mathrm{d}s \notag  \\
\ge & c% \eta
\sum\limits_{j,j' \in \mathbb{Z}} \iint_{|x - y| \le \frac{R^2}{ 4 N(t)} } \sum\limits_{\mathcal{R}(j)}
 \left( \overline{Iu_j} Iu_{j_1} \overline{Iu_{j_2}} Iu_{j_3}  \right)(t,x) N(t)  \left|Iu_{j'}(t,y) \right|^2 \,\mathrm{d}x \mathrm{d}y  \notag \\
= & c  \sum\limits_j \int \sum\limits_{\mathcal{R}(j)} \left( \bar{u}_j u_{j_1}  \bar{u}_{j_2} {u}_{j_3}  \right)(t,x) \,\mathrm{d}x N(t)
 \left\|\vec{u} \right\|_{L_y^2 l^2}^2  \label{eq5.38v91} \\
& - c \sum\limits_{j \in \mathbb{Z}} \int \sum\limits_{\mathcal{R}(j)}
 \left( \overline{P_{\ge CK} u_j} u_{j_1} \bar{u}_{j_2} u_{j_3}  \right)(t,x) N(t) \,\mathrm{d}x
 \left\|\vec{u} \right\|_{L_y^2 l^2}^2 \label{eq5.39v91} \\
& - c  \sum\limits_{j,j' \in  \mathbb{Z}} \iint \sum\limits_{\mathcal{R}(j)}  \left( \bar{u}_j u_{j_1} \bar{u}_{j_2}
u_{j_3}  \right)(t,x) \left|P_{\ge CK} u_{j'}(t,y) \right|^2 N(t) \,\mathrm{d}x \mathrm{d}y \label{eq5.40v91} \\
& - c  \sum\limits_{j,j' \in \mathbb{Z}} \iint_{|x - y| \ge \frac{R^2}{ 4 N(t)} }
 \sum\limits_{\mathcal{R}(j)}  \left( \overline{Iu_j} Iu_{j_1} \overline{Iu_{j_2}} Iu_{j_3}  \right)(t,x)
 \left|Iu_{j'}(t,y) \right|^2 N(t) \,\mathrm{d}x \mathrm{d}y . \label{eq5.41v91}
\end{align}

We consider \eqref{eq5.38v91}. Integrating on $[0,T]$, by Lemma \ref{le2.3v20}, we obtain % note
\begin{align}
& \int_0^T \sum\limits_j \int \sum\limits_{\mathcal{R}(j)} \left( \bar{u}_j u_{j_1} \bar{u}_{j_2} u_{j_3} \right)(t,x) N(t)
\,\mathrm{d}x  \left\|\vec{u} \right\|_{L_y^2 l^2}^2 \,\mathrm{d}t\notag  \\
\gtrsim &   \left\|\vec{u} \right\|_{L_y^2 l^2}^2 \sum\limits_{J_k \subseteq [0,T]}   N(J_k) % \int_0^T N(t)^2 \,\mathrm{d}t
\sim  \left\|\vec{u} \right\|_{L_y^2 l^2}^2 \sum\limits_{J_k \subseteq [0,T] } \int_{J_k}  N(t)^3 \,\mathrm{d}t \sim  \left\|\vec{u} \right\|_{L_y^2 l^2}^2 \int_0^T   N(t)^3 \,\mathrm{d}t
\sim \|\vec{u}\|_{L_y^2 l^2}^2 K. \label{eq5.42v91}
\end{align}

We now consider \eqref{eq5.40v91}. By \eqref{eq2.10v20}, we have %the almost periodicity, we see
\begin{align*}
\left\|P_{|\xi - \xi(t)| \ge C(\eta) N(t)} \vec{u} \right\|_{L_y^2 l^2}^2 < \eta.
\end{align*}

Since $\int_0^T |\xi'(t)| \,\mathrm{d}t << CK$, we have
\begin{align*}
\left\| P_{ \ge CK} \vec{u} \right\|_{L_y^2 l^2} \le  \left\|P_{|\xi - \xi(t)| \ge C(\eta_0) K} \vec{u}  \right\|_{L_y^2 l^2} \le \eta_0,
\end{align*}
where we take $C>> C( \eta)$. Integrating \eqref{eq5.40v91} on $[0,T]$, we obtain
\begin{align}
& \int_0^T \sum\limits_{j \in \mathbb{Z}} \int \sum\limits_{\mathcal{R}(j)}  \left( \bar{u}_j u_{j_1} \bar{u}_{j_2} u_{j_3} \right)(t,x) N(t)
 \left\|P_{\ge CK } \vec{u}(t) \right\|_{L_y^2 l^2}^2 \,\mathrm{d}t  \notag \\
\lesssim &  \eta_0 \int_0^T N(t)
\sum\limits_{j \in \mathbb{Z}} \sum\limits_{\mathcal{R}(j)} \int  \left( \bar{u}_j u_{j_1} \bar{u}_{j_2} u_{j_3}  \right)(t,x) \,\mathrm{d}x \mathrm{d}t
\sim  \eta_0 \sum\limits_{J_k \subseteq [0,T]} N(J_k) \sim \eta_0 K. \label{eq5.43v91}
\end{align}

We now consider \eqref{eq5.39v91}. Integrating on $[0,T]$, we obtain
\begin{align}
&\int_0^T \sum\limits_{j \in \mathbb{Z}} \int \sum\limits_{\mathcal{R}(j)}
 \left( \overline{P_{\ge CK} u_j} u_{j_1} \bar{u}_{j_2} u_{j_3}  \right)(t,x) N(t)
\|\vec{u}\|_{L_y^2 l^2}^2 \,\mathrm{d}x \mathrm{d}t \notag \\
\lesssim & \|\vec{u}\|_{L_y^2 l^2}^2 \int_0^T \int \sum\limits_{j \in \mathbb{Z}} \sum\limits_{\mathcal{R}(j)}
\left( \overline{P_{|\xi - \xi(t)| \ge C(\eta_0) N(t)} u_j} u_{j_1} \bar{u}_{j_2} u_{j_3}  \right)(t,x)  N(t) \,\mathrm{d}x \mathrm{d}t
\lesssim  o_{C(\eta_0)}(1) K. \label{eq5.44v91}
\end{align}
We consider \eqref{eq5.41v91}. By Lemma \ref{le2.6v20}, we have on the characteristic interval $J_k \subseteq [0,T]$,
\begin{align}\label{eq5.45v92}
\int_{J_k}  \int_{|x - x(t)|\ge \frac{R^2}{8 N(t)} }  \left\|\vec{u}(t,x) \right\|_{l^2}^4 \,\mathrm{d}x \mathrm{d}t \le o_{R^2} (1),
\end{align}
and
\begin{align}
\int_{J_k}
 \int_{|x - x(t)|\ge \frac{R^2}{8 N(t)} }  \left\| P_{\ge CK } \vec{u}(t,x) \right\|_{l^2}^4 \,\mathrm{d}x \mathrm{d}t
&  \le \int_{J_k}  \int \left\|P_{\ge CK} \vec{u}(t,x)  \right\|_{l^2}^4 \,\mathrm{d}x \mathrm{d}t \notag \\
& \lesssim \int_{J_k}  \int  \left\|P_{|\xi - \xi(t)| \ge C(\eta_0) N(t) } \vec{u}(t,x)  \right\|_{l^2}^4 \,\mathrm{d}x \mathrm{d}t
\lesssim o_{C(\eta_0)}(1), \label{eq5.46v92}
\end{align}
where we use $  \left|\xi'(t) \right|  \lesssim N(t)^3 $ in the last but one inequality.

 By \eqref{eq2.10v20}, %the almost periodicity,
  we also have
\begin{align}\label{eq5.47v92}
\int_{|y - x(t)|\ge \frac{R^2}{8 N(t)} } \left\|\vec{u}(t,y) \right\|_{l^2}^2 \,\mathrm{d}y \le o_{R^2}(1),
\end{align}
and
\begin{align} \label{eq5.48v92}
\int_{|y - x(t)| \ge \frac{R^2}{ 8 N(t)} }  \left\|P_{\ge CK} \vec{u} (t,y)  \right\|_{l^2}^2 \,\mathrm{d}y
\le \int  \left\|P_{\ge CK} \vec{u}(t,y)  \right\|_{l^2}^2 \,\mathrm{d}y
\le \int  \left\|P_{|\xi - \xi(t)| \ge C(\eta_0) N(t)} \vec{u}(t,y)  \right\|_{l^2}^2 \,\mathrm{d}y \lesssim \eta_0.
\end{align}
Integrating on $[0,T]$, by \eqref{eq5.45v92}, \eqref{eq5.46v92}, \eqref{eq5.47v92}, and \eqref{eq5.48v92}, we have
%We see
\begin{align*}
& \int_0^T \iint_{|x - y| \ge \frac{R^2}{ 4 N(t)} } \sum\limits_{j,j' \in \mathbb{Z}} \sum\limits_{\mathcal{R}(j)}
 \left( \overline{Iu_j} Iu_{j_1} \overline{Iu_{j_2}} Iu_{j_3}  \right)(t,x) \left|Iu_{j'}(t,y) \right|^2 N(t)  \,\mathrm{d}x \mathrm{d}y \mathrm{d}t \\
\lesssim & \int_0^T \int_{| x- x(t)|\ge \frac{R^2}{ 8 N(t)} } \sum\limits_j \sum\limits_{\mathcal{R}(j)}
\left( \overline{Iu_j} Iu_{j_1} \overline{Iu_{j_2}} Iu_{j_3}  \right)(t,x) N(t)  \,\mathrm{d}x \mathrm{d}t
\cdot \sup\limits_{t \in [0,T)} \int_{|y - x(t)|\ge \frac{R^2}{ 8 N(t)} } \sum\limits_{j' \in \mathbb{Z}}
\left|Iu_{j'}(t,y) \right|^2 \,\mathrm{d}y  \\
\lesssim & \int_0^T N(t)  \int  \left\|\vec{u}(t,x) \right\|_{l^2}^4  \,\mathrm{d}x \mathrm{d}t
\cdot \sup\limits_{t \in [0,T]} \int_{|y - x(t)|\ge \frac{R^2}{ 8 N(t) } }  \left(  \left\|\vec{u}(t,y) \right\|_{l^2}^2 +
\left\|P_{\ge CK} \vec{u}(t,y) \right\|_{l^2}^2  \right) \,\mathrm{d}y
\lesssim o_R(1) K.
\end{align*}
Thus, we get
\begin{align}\label{eq5.49v92}
\int_0^T \iint_{|x - y| \ge \frac{R^2}{ 4 N(t)} } \sum\limits_{j,j' \in \mathbb{Z}} \sum\limits_{\mathcal{R}(j)}
\left( \overline{Iu_j} Iu_{j_1} \overline{Iu_{j_2}} Iu_{j_3}  \right)(t,x) N(t) \left|Iu_{j'}(t,y) \right|^2
\,\mathrm{d}x \mathrm{d}y \mathrm{d}t \lesssim o_R(1) K .
\end{align}
Collecting the above estimates \eqref{eq5.12v86}-\eqref{eq5.18v89}, \eqref{eq5.25v89new}, \eqref{eq5.30v89}, \eqref{eq5.16v86},
 \eqref{eq5.17v86}, \eqref{eq5.42v91}, \eqref{eq5.43v91}, \eqref{eq5.44v91}, \eqref{eq5.49v92}, and \eqref{eq5.7v89}, we obtain
\begin{align}
& \quad \int_0^T M'(t) \,\mathrm{d}t \notag \\
\ge &  c  \eta K  - o_R(1) K
  - R^2 o(K)    \label{eq6.5v20}\\
 & + \int_0^T \sum\limits_{j,j'\in \mathbb{Z}} \iint_{\mathbb{R}^2 \times \mathbb{R}^2}  \frac{d}{dt}
 \left( \psi_{RN(t)^{- 1 } } (|x-y|) (x-y) N(t)\right) \cdot \Im \left( \overline{Iu_j(t,x)}
 \nabla_x Iu_j(t,x)\right) \left|Iu_{j'} (t,y) \right|^2
  \mathrm{d}x \mathrm{d}y \mathrm{d}t.  \label{eq6.7v20}
\end{align}

We now consider \eqref{eq6.7v20}.
We note
\begin{align*}
\frac{d}{dt} \left( \psi_{RN(t)^{-  1 }} ( |x-y|) (x-y)  N(t)\right)
= N'(t)\frac1{ 2\pi R^2} \int_{\mathbb{R}^2}  \varphi \left( \left|\frac{x N(t)} R - s\right|\right) \varphi\left( \left|\frac{y N(t)}R - s\right|\right) (x-y) \,\mathrm{d}s.
\end{align*}

Notice that
\begin{align*}
N'(t) \sum\limits_{j,j'\in \mathbb{Z}} \iint_{\mathbb{R}^2\times \mathbb{R}^2}  \varphi\left( \left| \frac{x N(t) }R - s\right|\right) \varphi\left( \left|\frac{y N(t)}R- s\right|\right) \Im \left( \overline{Iu_j (t,x)} \nabla_x Iu_j (t,x) \right) \cdot (x-y) \left|Iu_{j'} (t,y) \right|^2 \,\mathrm{d}x \mathrm{d}y
\end{align*}
is also invariant under the Galilean transformation $I\vec{u}(t,x) \mapsto e^{- i x \xi(s)} I\vec{u}(t,x)$. % Moreover for
 Then for any $\epsilon > 0$, by the Cauchy-Schwarz inequality, we have
\begin{align}
& N'(t) \sum\limits_{j,j' \in \mathbb{Z}} \iint_{\mathbb{R}^2\times \mathbb{R}^2}  \varphi \left( \left|\frac{x N(t)} R- s\right|\right) \varphi\left( \left|\frac{y N(t)} R - s\right|\right) \notag \\
& \qquad \cdot  \Im \left( \overline{Iu_j} \nabla_x \left( e^{- i x\xi(s)} Iu_j \right)\right)(t,x) \cdot (x-y)  \left|Iu_{j'}(t,y) \right|^2   \,\mathrm{d}x \mathrm{d}y  \notag \\
  \le &\  \epsilon N(t) \sum\limits_{j,j'\in \mathbb{Z}} \iint_{\mathbb{R}^2\times \mathbb{R}^2}  \varphi\left( \left|\frac{x N(t)}R - s\right|\right) \varphi\left( \left|\frac{y N(t)}R - s\right|\right)  \left|\nabla_x  \left(e^{- i x \xi(s)} Iu_j \right)(t,x)\right|^2 \left|Iu_{j'} (t,y) \right|^2 \,\mathrm{d}x \mathrm{d}y \label{eq6.12v20}\\
&   + C(\epsilon) \frac{ (N'(t))^2}{N(t)^3} \sum\limits_{j,j' \in \mathbb{Z}} \iint_{\mathbb{R}^2\times \mathbb{R}^2}  \varphi\left( \left|\frac{x N(t)}R - s\right|\right) \varphi\left( \left|\frac{y N(t)}R -s\right|\right)  \left|Iu_{j} (t,x) \right|^2  \left|Iu_{j'} (t,y) \right|^2 \,\mathrm{d}x \mathrm{d}y. \label{eq6.13v20}
\end{align}
For $\epsilon >0$ small, the contribution of \eqref{eq6.12v20} will be absorbed into \eqref{eq5.12v86},
which in turn being absorbed into the first term in \eqref{eq6.5v20}.
 To estimate \eqref{eq6.13v20}, we recall that $N(t)$ satisfies \eqref{eq5.1v100new} and
 %replace the $N(t)$ by a smoother $\tilde{N}(t)$ which is less oscillatory and satisfies
\eqref{eq5.1v100}.
Thus, taking $T$ sufficiently large, by \eqref{eq5.1v100new}, \eqref{eq2.17v20}, the conservation of mass, and
 \eqref{eq5.1v100}, we have
\begin{align}\label{eq5.21v58}
\int_0^T \eqref{eq6.13v20} \,\mathrm{d}t  \le  C(\epsilon) \int_0^T  \left|N'(t) \right| \,\mathrm{d}t \le %\frac\eta2
\frac{C(\epsilon)}m \int_0^T N(t)
\int_{\mathbb{R}^2}
\sum\limits_{j \in \mathbb{Z}} \sum\limits_{\mathcal{R}(j)} \left( \overline{I u_j} Iu_{j_1} \overline{Iu_{j_2}} I u_{j_3}  \right)(t,x) \,\mathrm{d}x \,\mathrm{d}t
\le \frac{c \eta}2 K.
\end{align}
Therefore, \eqref{eq6.5v20}, \eqref{eq6.7v20}, \eqref{eq6.12v20}, and \eqref{eq5.21v58} yield %imply
\begin{align*}
\int_0^T M'(t) \,\mathrm{d}t \ge \frac{c  \eta}2 K- R^2 o(K) - o_R(1)K.
\end{align*}
On the other hand, by \eqref{eq2.10v20}, we have
\begin{align*}
\sup\limits_{t \in [0, T ]} |M(t)| \lesssim R^2 o(K).
\end{align*}
Choosing $R(\eta)$ sufficiently large, by Lemma \ref{le2.6v20} and \eqref{eq2.10v20}, since we can take $T$ large enough and make $K$ be arbitrarily large, we conclude that $\vec{u} = 0$.

\end{proof}

\subsection{Exclusion of the almost periodic solution when $\int_0^\infty N(t)^3 \,\mathrm{d}t < \infty$}
In this subsection, we exclude the case when $\int_0^\infty N(t)^3 \,\mathrm{d}t < \infty$ by using the conservation of mass and energy.
\begin{theorem}\label{th7.1v20}
There is no almost periodic solution to \eqref{eq1.1} with $\int_0^\infty N(t)^3 \,\mathrm{d}t = K < \infty$ in Theorem \ref{th4.9v51}.
\end{theorem}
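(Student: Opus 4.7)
My plan is to derive a contradiction by showing that $E(\vec u)\equiv 0$, which by the coercivity estimate \eqref{eq3.0v96} (valid since $\|\vec u\|_{L_x^2 l^2}=m_0<\tfrac{1}{\sqrt{2}}\|Q\|_{L_x^2}$) forces $\|\nabla \vec u\|_{L_x^2 l^2}\equiv 0$, hence $\vec u\equiv 0$, contradicting $m_0>0$. By Theorem \ref{th1.12v20}, the hypothesis $\int_0^\infty N(t)^3\,dt=K<\infty$ propagates to the additional regularity $\|\vec u\|_{L_t^\infty \dot H_x^s l^2}\lesssim K^s$ for every $0\le s<3$; in particular $\vec u(t)\in H_x^1 l^2$, so $E(\vec u(t))$ is well-defined and conserved, and I will invoke the higher regularity for some fixed $s\in (1,3)$ below. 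Moreover, the translation invariance of $\vec F$ (which uses the symmetries of the resonance set $\mathcal R(j)$) ensures conservation of momentum $P(\vec u)=\sum_j\int \Im(\bar u_j\nabla u_j)\,dx=\int \xi\sum_j|\hat u_j|^2\,d\xi$. Combined with the frequency concentration in \eqref{eq2.10v20} and the higher regularity, this yields that $|\xi(t)|$ is uniformly bounded and that $\xi(t)\to P_0/m_0^2$ along any sequence with $N(t)\to 0$. After a Galilean transformation with velocity $P_0/m_0^2$, which preserves the equation, the $L_x^2 l^2$ norm, and the almost periodic structure (with updated parameters $x(t)$, $\xi(t)$, $N(t)$), I may assume $P_0=0$; then $\xi(t_n)\to 0$ along any sequence $t_n\to\infty$ with $N(t_n)\to 0$.

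Pick such a sequence $t_n$; it exists because $\int_0^\infty N(t)^3\,dt<\infty$ and $N(t)\le 1$. Along it I claim $\|\nabla\vec u(t_n)\|_{L_x^2 l^2}\to 0$. Indeed, split
\[\|\nabla\vec u(t_n)\|_{L_x^2 l^2}^2=\int_{|\xi-\xi(t_n)|<C(\eta)N(t_n)}|\xi|^2\sum_j|\hat u_j|^2\,d\xi+\int_{|\xi-\xi(t_n)|\ge C(\eta)N(t_n)}|\xi|^2\sum_j|\hat u_j|^2\,d\xi.\]
The first piece is at most $(|\xi(t_n)|+C(\eta)N(t_n))^2 m_0^2$, which tends to $0$ as $n\to\infty$. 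For the second piece, I further split the integration at a threshold $|\xi|=R$: on $\{|\xi|\le R\}$ the factor $|\xi|^2\le R^2$ times the tail $\eta$ from \eqref{eq2.10v20} gives $\lesssim R^2\eta$, while on $\{|\xi|>R\}$ the pointwise bound $|\xi|^2\le R^{2-2s}|\xi|^{2s}$ together with $\|\vec u\|_{\dot H^s l^2}^2\lesssim K^{2s}$ produces $\lesssim R^{2-2s}K^{2s}$. Optimizing in $R$ yields a total bound $\lesssim K^2\eta^{1-1/s}$, which is arbitrarily small as $\eta\to 0$. Therefore $\|\nabla\vec u(t_n)\|_{L_x^2 l^2}\to 0$, and the interpolation \eqref{eq5.27v32} then gives $\mathcal N(\vec u(t_n))\lesssim m_0^2\|\nabla\vec u(t_n)\|_{L_x^2 l^2}^2\to 0$.

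Combining these decay statements, $E(\vec u(t_n))=\tfrac12\|\nabla\vec u(t_n)\|_{L_x^2 l^2}^2-\tfrac14\mathcal N(\vec u(t_n))\to 0$. By conservation, $E(\vec u)\equiv 0$; by \eqref{eq3.0v96} applied to $\vec u(0)$, $\|\nabla \vec u\|_{L_x^2 l^2}\equiv 0$, so each $u_j$ is constant in $x$ and hence (being in $L^2$) identically zero. This contradicts $\|\vec u\|_{L_x^2 l^2}=m_0>0$. The main obstacle in carrying out this plan is the first paragraph: verifying momentum conservation rigorously using both resonance conditions $j_1-j_2+j_3=j$ and $j_1^2-j_2^2+j_3^2=j^2$, and pushing the Galilean boost coherently through the entire infinite coupled system so that the parameter $\xi(t)$ in \eqref{eq2.10v20} really is shifted by $P_0/m_0^2$. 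Once this is in hand, the quantitative decay argument in the second paragraph is a routine Fourier-space interpolation relying only on \eqref{eq2.10v20} and Theorem \ref{th1.12v20}.
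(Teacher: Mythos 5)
Your proof is correct and follows essentially the same architecture as the paper's: use the extra regularity from Theorem \ref{th1.12v20}, undo a Galilean boost, show the $\dot H^1_x l^2$ norm vanishes along a sequence of times via the concentration \eqref{eq2.10v20} plus a Fourier-side interpolation, conclude $E\equiv 0$ by conservation, and invoke the coercivity \eqref{eq3.0v96} to force $\vec u=0$. The one genuine divergence is how you locate the Galilean parameter. You invoke conservation of spatial momentum $P(\vec u)$ together with a concentration argument to show $\xi(t_n)\to P_0/m_0^2$ along $N(t_n)\to 0$, and you flag verifying momentum conservation for the full infinite coupled system as ``the main obstacle''. The paper sidesteps this entirely: \eqref{eq2.17v20} already records $|\xi'(t)|\lesssim N(t)^3$, so $\int_0^\infty |\xi'(t)|\,\mathrm{d}t\lesssim K<\infty$ and $\xi(t)$ converges to some $\xi_\infty$ with $|\xi_\infty|\lesssim K$, in one line and along the full time axis rather than only a subsequence. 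Your momentum route is sound (translation invariance in $x$ does give momentum conservation, and the symmetry $\{j,j_2\}=\{j_1,j_3\}$ of $\mathcal R(j)$ makes the interaction potential real, which is what the Hamiltonian/Noether argument really needs), but it is more work than what is already available. The remaining quantitative steps --- the two-scale Fourier split with optimization in the cutoff $R$ yielding the bound $\lesssim K^2\eta^{1-1/s}$, the use of \eqref{eq1.5v83}--\eqref{eq5.27v32} to control $\mathcal N$, and the coercivity conclusion --- match the paper's argument closely.
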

\begin{proof}
By Theorem \ref{th1.12v20}, we have for $0 \le s  < 3$,
\begin{align}\label{eq7.1v20}
\|\vec{ u} \|_{L_t^\infty \dot{H}_x^s l^2 ([0,\infty) \times \mathbb{R}^2)} \lesssim_{m_0 } K^s.
\end{align}
Also by \eqref{eq2.17v20}, there exists $\xi_\infty \in \mathbb{R}^2$ with $|\xi_\infty | \lesssim K$ such that
\begin{align*}
\lim\limits_{t\to \infty} \xi(t) = \xi_\infty.
\end{align*}
Taking
\begin{align*}
\vec{v}(t,x) = e^{-it |\xi_\infty|^2} e^{-ix \xi_\infty} \vec{u}(t,x+2 t\xi_\infty),
\end{align*}
then we have
\begin{align*}
\|\vec{v} \|_{\dot{H}_x^s l^2 } \lesssim K^s.
\end{align*}
By the interpolation inequality and \eqref{eq2.10v20}, we have for any $\eta >0$,
\begin{align*}
\liminf\limits_{t\to \infty} \left\|\vec{ v} (t,x)\right\|_{\dot{H}^1_x l^2 } \lesssim \liminf\limits_{t\to \infty } C(\eta) N(t)^2+ \eta^\frac12 K \lesssim \eta^\frac12 K.
\end{align*}
Therefore, by the conservation of energy, we have
\begin{align}\label{eq7.6v20}
E(\vec{ v} )= 0.
\end{align}
By \eqref{eq3.0v96}, we have
\begin{align}\label{eq5.26v32}
E(\vec{v}(t)) \ge c \| \vec{v} (t,x)\|_{\dot{H}_x^1 l^2 }^2  ,
\end{align}
where $c $ is some constant depending on $\|\vec{v}(0)\|_{L_x^2 l^2}$. Therefore, by \eqref{eq7.6v20}, \eqref{eq5.26v32}, and \eqref{eq5.27v32}, we have $\vec{v} =0$ and therefore $\vec{u}  = 0$.
\end{proof}

\section{Sketch of the proof of Theorem \ref{th1.3}. }\label{s1se3v30}

In this section, we give a sketch of the main idea of the proof of the global well-posedness and scattering of the $N-$coupled focusing cubic nonlinear Schr\"odinger system \eqref{eq6.1v83}.

The local well-posedness and stability theory of \eqref{eq6.1v83} can be established by following the argument of the nonlinear Schr\"odinger equations, see for example, \cite{T2}. The variational estimate of \eqref{eq6.1v83} is given in Section \ref{se3v30}, and we will rely on Theorem \ref{th3.4v99}. The reduction of the non-scattering to the almost periodic solution as in Theorem \ref{th4.9v51} is similar to the argument of the two dimensional cubic nonlinear Schr\"odinger equations, we refer to \cite{TVZ2}. The exclusion of the almost periodic solution is similar to the argument in Section \ref{se5v30}, but is easier because the couple of the nonlinear terms is finite, and we do not need to care the interchange of the Lebesgue norms.

\vspace{0.5cm}

\noindent \textbf{Acknowledgments.}
X. Cheng has been supported by ``the Fundamental Research Funds for the Central Universities" (No. B210202147), and he thanks the MATRIX for hosting him and some parts of this article was preparing when Xing Cheng was in MATRIX and Monash University. Z. Guo was supported by the ARC project (No. DP170101060). G. Hwang was supported by the 2019 Yeungnam University Research Grant. H. Yoon was supported by the Basic Science Research Program through the National Research Foundation of Korea (NRF) funded by the Ministry
of Science and ICT (NRF-2020R1A2C4002615).
The authors are grateful to Professor S. Kwon, S. Masaki, G. Xu, and Z. Zhao for helpful discussion on the nonlinear Schr\"odinger equations on the cylinders and also the nonlinear Schr\"odinger system.


\begin{thebibliography}{99}


\bibitem{AA} N. Akhmediev and A. Ankiewicz, \emph{Partially coherent solitons on a finite background}, Phys. Rev. Lett. {\bf 82} (1999), 2661-2664.


\bibitem{CS} R. Carles and C. Sparber, \emph{Orbital stability vs. scattering in the cubic-quintic Schr\"odinger equation}, Rev. Math. Phys. {\bf 33 } (2021), no. 3, Paper No. 2150004, 27 pp.

\bibitem{CHP} T. Chen, Y. Hong, and N. Pavlovi\'c, \emph{Global well-posedness of the NLS system for infinitely many fermions}, Arch. Ration. Mech. Anal. {\bf224} (2017), no. 1, 91-123.

\bibitem{CHP2} T. Chen, Y. Hong, and N. Pavlovi\'c, \emph{On the scattering problem for infinitely many fermions in dimensions $d\ge 3$ at positive temperature}, Ann. Inst. H. Poincar\'e Anal. Non Lin\'eaire {\bf35} (2018), no. 2, 393-416.

\bibitem{Ch} X. Cheng, \emph{Scattering for the mass super-critical perturbations of the mass critical nonlinear Schr\"odinger equations}, Illinois J. Math. {\bf 64 } (2020), no. 1, 21-48.


\bibitem{CGGLS} X. Cheng, C.-Y. Guo, Z. Guo, X. Liao,  and J. Shen, \emph{Scattering of the three-dimensional cubic nonlinear Schr\"odinger equation with partial harmonic potentials}, arXiv:2105.02515.


\bibitem{CGHY} X. Cheng, Z. Guo, G. Hwang, and H. Yoon, \emph{Threshold behavior of solution of the nonlinear Schr\"odinger system}, preprint.

\bibitem{CGM} X. Cheng, Z. Guo, and S. Masaki, \emph{Scattering for the two dimensional cubic complex-valued nonlinear Klein-Gordon equation}, preprint.


\bibitem{CGYZ} X. Cheng, Z. Guo, K. Yang, and L. Zhao, \emph{On scattering for the cubic defocusing nonlinear Schr\"odinger equation on the waveguide $\mathbb{R}^2\times \mathbb{T}$}, Rev. Mat. Iberoam. {\bf 36 } (2020), no. 4, 985-1011.

\bibitem{CGZ} X. Cheng, Z. Guo, and Z. Zhao, \emph{On scattering for the defocusing quintic nonlinear Schr\"odinger equation on the two-dimensional cylinder}, SIAM J. Math. Anal. {\bf 52 } (2020), no. 5, 4185-4237.

\bibitem{CW} T. Colin and M. I. Weinstein, \emph{On the ground states of vector nonlinear Schr\"odinger equations}, Ann. Inst. H. Poincar\'e Phys. Th\'eor. {\bf65} (1996), no. 1, 57-79.

\bibitem{CKSTT1} J. Colliander, M. Keel, G. Staffilani, H. Takaoka and T. Tao, \emph{Global existence and scattering for rough solutions of a nonlinear Schr\"odinger equation on $\mathbb{R}^3$}, Comm. Pure Appl. Math. {\bf 57 } (2004), no. 8, 987-1014.



 \bibitem{CKSTT0} J. Colliander, M. Keel, G. Staffilani, H. Takaoka and T. Tao, \emph{Global well-posedness and scattering for the energy-critical nonlinear Schr{\"o}dinger equation in $\mathbb{R}^3$}, Ann. of Math. {\bf167} (2008), 767-865.

\bibitem{CGT} J. Colliander, M. Grillakis, and N. Tzirakis, \emph{Tensor products and correlation estimates with applications to nonlinear Schr\"odinger equations}, Comm. Pure Appl. Math. {\bf62}(2009), no. 7, 920-968.


%\bibitem{COS} S. Correia, F. Oliveira, and J. D. Silva, \emph{Mass-transfer instability of ground-states for Hamiltonian Schr\"odinger system}, arXiv: 1912.09691.

\bibitem{D3} B. Dodson, \emph{Global well-posedness and scattering for the defocusing, $L^2$-critical nonlinear Schr\"odinger equation when $d\ge 3$}, J. Amer. Math. Soc. {\bf25} (2012), no. 2, 429-463.

\bibitem{D1} B. Dodson, \emph{Global well-posedness and scattering for the defocusing, $L^2$-critical, nonlinear Schr\"odinger equation when $d=2$}, Duke Math. J. {\bf165} (2016), no. 18, 3435-3516.

\bibitem{D2} B. Dodson, \emph{Global well-posedness and scattering for the defocusing, $L^2$-critical, nonlinear Schr\"odinger equation when $d = 1$},  Amer. J. Math. {\bf 138} (2016), no. 2, 531-569.

\bibitem{D4} B. Dodson, \emph{Global well-posedness and scattering for the mass critical nonlinear Schr\"odinger equation with mass below the mass of the ground state}, Adv. Math. {\bf285} (2015), 1589-1618.

\bibitem{D7} B. Dodson, \emph{The $L^2$ sequential convergence of a solution to the mass-critical NLS above the ground state}, arXiv:2101.09172.

\bibitem{D5} B. Dodson, \emph{A determination of the blowup solutions to the focusing NLS with mass equal to the mass of the soliton}, arXiv:2106.02723.

\bibitem{F}  C. Fan, \emph{The $L^2$ weak sequential convergence of radial focusing mass critical NLS solutions with mass above the ground state}, Int. Math. Res. Not. IMRN 2021, no. 7, 4864-4906.

\bibitem{Fa} L.G. Farah and A. Pastor, \emph{Scattering for a 3D coupled nonlinear Schr\"odinger system}, J. Math. Phys. {\bf 58 } (2017), no. 7, 071502, 33 pp.

\bibitem{HIN} M. Hamano, T. Inui, K. Nishimura, \emph{Scattering for the quadratic nonlinear Schr\"{o}dinger system in $\mathbb{R}^5$ without mass-resonance condition}, arXiv: 1903.05880.

%\bibitem{HM} M. Mamano and S. Masaki, \emph{A sharp scattering threshold level for mass-subcritical nonlinear Schr\"odinger system}, arXiv: 1912.12584.

\bibitem{HP} Z. Hani and B. Pausader, \emph{On scattering for the quintic defocusing nonlinear Schr\"odinger equation on $\mathbb{R}\times \mathbb{T}^2$}, Comm. Pure Appl. Math. {\bf67} (2014), no. 9, 1466-1542.

\bibitem{HP0} { E. Haus and M. Procesi, \emph{KAM for beating solutions of the quintic NLS}, Comm. Math. Phys. {\bf 354} (2017), no. 3, 1101-1132. }

\bibitem{HOT} {N. Hayashi, T. Ozawa, and K. Tanaka, \emph{On a system of nonlinear Schr\"odinger equations with quadratic interaction}, Ann. Inst. H. Poincar\'e Anal. Non Lin\'eaire {\bf 30 } (2013), no. 4, 661-690}

\bibitem{HKY} Y. Hong, S. Kwon, and H. Yoon, \emph{Global existence versus finite time blowup dichotomy for the system of nonlinear Schr\"odinger equations}, J. Math. Pures Appl. (9) {\bf 125} (2019), 283-320.

\bibitem{I}  {N. Ikoma, \emph{Uniqueness of positive solutions for a nonlinear elliptic system}, NoDEA Nonlinear Differential Equations Appl. {\bf 16 } (2009), no. 5, 555-567.}

\bibitem{IKN} T. Inui, N. Kishimoto, and K. Nishimura, \emph{Scattering for a mass critical NLS system below the ground state with
 and without mass-resonance condition}, Discrete Contin. Dyn. Syst. {\bf 39} (2019), no. 11, 6299-6353.

\bibitem{IKN1}  T. Inui, N. Kishimoto, and K. Nishimura, \emph{Blow-up of the radially symmetric solutions for the quadratic nonlinear Schr\"{o}dinger system without mass-resonance}, arXiv:1810.09153.

%\bibitem{HPTV} Z. Hani, B. Pausader, N. Tzvetkov, and N. Visciglia, \emph{Modified scattering for the cubic Schr\"odinger equation on product spaces and applications}, Forum of Mathematics, PI. (2015), Vol. 3, 1-63.

%\bibitem{IP1} A. D. Ionescu and B. Pausader, \emph{Global well-posedness of the energy-critical defocusing NLS on $\mathbb{R}\times \mathbb{T}^3$}, Comm. Math. Phys. {\bf 312} (2012), no. 3, 781-831.

\bibitem{KM} C. E. Kenig and F. Merle, \emph{Global well-posedness, scattering and blow-up for the energy-critical, focusing, non-linear Schr\"odinger equation in the radial case}, Invent. Math. {\bf 166} (2006), no. 3, 645-675.

\bibitem{Killip-Visan1} R. Killip and M. Visan, \emph{Nonlinear Schr\"odinger equations at critical regularity}. Proceedings for the Clay summer school ``Evolution Equations'', Eidgen\"ossische technische Hochschule, Z\"urich, 2008.

\bibitem{Kw}  {M. K. Kwong, \emph{Uniqueness of positive solutions of $\Delta u - u + u^p = 0$ in $\mathbb{R}^n$}, Arch. Rational Mech. Anal. {\bf 105} (1989), no. 3, 243-266.}

\bibitem{Lieb-Loss} E. H. Lieb and M. Loss, \emph{Analysis}. Second edition. Graduate Studies in Mathematics, {\bf14}. American Mathematical Society, Providence, RI, 2001. xxii+346 pp.


\bibitem{LW}  {T.-C. Lin and J. Wei, \emph{Ground state of $N$ coupled nonlinear Schr\"odinger equations in $\mathbb{R}^n$, $n\le 3$}, Comm. Math. Phys. {\bf 255 } (2005), no. 3, 629-653.}

\bibitem{MN} N. Masmoudi and K. Nakanishi, \emph{From nonlinear Klein-Gordon equation to a system of coupled nonlinear Schr\"odinger equations}, Math. Ann. {\bf324} (2002), no. 2, 359-389.

\bibitem{Mu} J. Murphy, \emph{Threshold scattering for the 2D radial cubic-quintic NLS}, Comm. Partial Differential Equations {\bf 46 } (2021), no. 11, 2213-2234.

\bibitem{NTDS} N. V. Nguyen, R. Tian, B. Deconinck, and N. Sheils, \emph{Global existence for a coupled system of Schr\"odinger equations with power-type nonlinearities}, J. Math. Phys. {\bf 54 } (2013), no. 1, 011503, 19 pp.

\bibitem{Oh} M. Ohta, \emph{Stability of solitary waves for coupled nonlinear Schr\"odinger equations}, Nonlinear Anal. {\bf 26 } (1996), no. 5, 933-939.

\bibitem{OP} F. Oliveira and A. Pastor, \emph{On a Schr\"odinger system arizing in nonlinear optics}, Anal. Math. Phys. {\bf 11 } (2021), no. 3, Paper No. 123, 38 pp.


\bibitem{OS}  {T. Ozawa and H. Sunagawa, \emph{Small data blow-up for a system of nonlinear Schr\"odinger equations},  J. Math. Anal. Appl. {\bf 399 } (2013), no. 1, 147-155.}


\bibitem{Pa} A. Pastor, \emph{On three-wave interaction Schr\"odinger systems with quadratic nonlinearities: global well-posedness and standing waves}, Commun. Pure Appl. Anal. {\bf 18 } (2019), no. 5, 2217-2242.

\bibitem{PV} F. Planchon and L. Vega, \emph{Bilinear virial identities and applications}, Ann. Sci. \'Ec. Norm. Sup\'er. (4) {\bf42} (2009), no. 2, 261-290.

\bibitem{QZL} Y.-H. Qin, L.-C. Zhao, and L. Ling, \emph{Non-degenerate bound state solitons in multi-component Bose-Einstein condensates}, Phys. Rev. E {\bf 100 } (2019), 022212.

\bibitem{Si}  {B. Sirakov, \emph{Least energy solitary waves for a system of nonlinear Schr\"odinger equations in $\mathbb{R}^n$}, Comm. Math. Phys. {\bf 271 } (2007), no. 1, 199-221}

\bibitem{T2} T. Tao, \emph{Nonlinear dispersive equations: local and global analysis}, CBMS Regional Conference Series in Mathematics, 106. American Mathematical Society, Providence, R.I., 2006.

\bibitem{TVZ1} T. Tao, M. Visan and X. Zhang, \emph{Global well-posedness and scattering for the defocusing mass-critical nonlinear Schr\"odinger equation for radial data in high dimensions}, Duke Math. J. 140 (2007), no. 1, 165-202.

\bibitem{TVZ2} T. Tao, M. Visan and X. Zhang, \emph{Minimal-mass blowup solutions of the mass-critical NLS}, Forum Math. {\bf20} (2008), no. 5, 881-919.

\bibitem{TTV} S. Terracini, N. Tzvetkov, and N. Visciglia, \emph{The nonlinear Schr\"odinger equation ground states on product spaces}, Anal. PDE {\bf7} (2014), no. 1, 73-96.

\bibitem{TV} N. Tzvetkov and N. Visciglia, \emph{Small data scattering for the nonlinear Schr\"odinger equation on product spaces}, Comm. Partial Differential Equations. {\bf 37} (2012), no. 1, 125-135.

\bibitem{TV2} N. Tzvetkov and N. Visciglia, \emph{Well-posedness and scattering for NLS on $\mathbb{R}^d\times \mathbb{T}$ in the energy space},  Rev. Mat. Iberoam. {\bf 32} (2016), no. 4, 1163-1188.

\bibitem{WY}  {J. Wei and W. Yao, \emph{Uniqueness of positive solutions to some coupled nonlinear Schr\"odinger equations}, Commun. Pure Appl. Anal. { \bf 11 } (2012), no. 3, 1003-1011.}

\bibitem{W} M. I. Weinstein, \emph{Nonlinear Schr\"odinger equations and sharp interpolation estimates}, Comm. Math. Phys. {\bf87} (1982/83), no. 4, 567-576.

\bibitem{X} G. Xu, \emph{Dynamics of some coupled nonlinear Schr\"odinger systems in $\mathbb{R}^3$}, Math. Methods Appl. Sci. {\bf37} (2014), no. {17}, 2746-2771.

\bibitem{YZ} K. Yang and L. Zhao, \emph{Global well-posedness and scattering for mass-critical, defocusing, infinite dimensional vector-valued resonant nonlinear Schr\"odinger system},  SIAM J. Math. Anal. 50 (2018), no. 2, 1593-1655.

\bibitem{YYZ} X. Yu, H. Yue, and Z. Zhao, \emph{Global Well-posedness for the focusing cubic NLS on the product space $\mathbb{R}\times \mathbb{T}^3$}, SIAM J. Math. Anal. 53 (2021), no. 2, 2243-2274.

\bibitem{ZL} L.-C. Zhao and J. Liu, \emph{Rogue-wave solutions of a three-component coupled nonlinear Schr\"odinger equation}, Phys. Rev. E {\bf 87} (2013), 013201.
\end{thebibliography}
\end{document}